\documentclass[twoside,11pt]{amsart}

\usepackage{amsmath,latexsym,amssymb, times, enumerate, xypic, stmaryrd, hyperref}
\usepackage{amscd, color}
\input amssym.def
\input amssym
\input xypic
\input xy
\xyoption{all}
\setlength{\topmargin}{-.1in} \setlength{\oddsidemargin}{0.3in}
\setlength{\evensidemargin}{0.3in} \setlength{\textheight}{8.5in}
\setlength{\textwidth}{6in}

\def\demo{\noindent{\bf Proof. }}
\def\sqr#1#2{{\vcenter{\hrule height.#2pt
        \hbox{\vrule width.#2pt height#1pt \kern#1pt
                \vrule width.#2pt}
        \hrule height.#2pt}}}
\def\square{\mathchoice\sqr64\sqr64\sqr{4}3\sqr{3}3}
\def\QED{\hfill$\square$}

\def\tratto{\mbox{\rule{2mm}{.2mm}$\;\!$}}

\def\m{{\mathfrak m}}
\def\n{{\mathfrak n}}

\def\p{{\mathfrak p}}

\newtheorem{Theorem}{Theorem}[section]
\newtheorem{Claim}{Claim}

\newtheorem{Lemma}[Theorem]{Lemma}
\newtheorem{Corollary}[Theorem]{Corollary}
\newtheorem{Proposition}[Theorem]{Proposition}

\newtheorem{Notation and Discussion}[Theorem]{Notation and Discussion}
\newtheorem{Assumptions and Discussion}[Theorem]{Assumptions and Discussion}
\newtheorem{Setting}[Theorem]{Setting}

\newtheorem{Example}[Theorem]{Example}
\newtheorem{Definition}[Theorem]{Definition}

\begin{document}

\baselineskip=16pt

\title[Generalized stretched ideals and  Sally's Conjecture]
{\Large\bf Generalized stretched ideals and Sally's Conjecture}

\author[Paolo Mantero]{Paolo Mantero$^{1}$}
\address{University of California, Riverside \\ Department of Mathematics \\
 Riverside, CA 92521}
\email{mantero@math.ucr.edu\newline
\indent{\it URL:} \href{http://math.ucr.edu/~mantero/}{\tt http://math.ucr.edu/$\sim$mantero/}}

\author{Yu Xie}
\address{Department of Mathematics and Statistics, Penn State Altoona, Altoona, PA 16601}
\email{yzx1@psu.edu}

\thanks{AMS 2010 {\em Mathematics Subject Classification}.
Primary 13A30; Secondary 13H15,
%13B22,
13C14, 13C15.
%13C40.
}
\thanks{$^1$ P. Mantero gratefully acknowledges the support of an AMS-Simons Travel Grant}

\vspace{-0.1in}

\begin{abstract}
We introduce the concept of $j$-stretched ideals in a Noetherian local ring. This notion %greatly
generalizes to arbitrary ideals the classical notion of stretched $\m$-primary ideals of Sally and Rossi-Valla,
as well as the concept of ideals of  minimal and almost minimal $j$-multiplicity introduced %recently
 by Polini-Xie. One of our main theorems states that, for a $j$-stretched ideal, the associated graded ring is Cohen-Macaulay if and only if two classical invariants of the ideal, the reduction number and the index of nilpotency, are equal.
Our second main theorem, presenting numerical conditions which ensure the almost Cohen-Macaulayness of
%provides conditions for
the associated graded ring of a $j$-stretched ideal, provides   a generalized version of Sally's conjecture.
%.the  proves the almost Cohen-Macaulayness of
This work, which also holds for modules,
 unifies the approaches of Rossi-Valla and  Polini-Xie  and generalizes  simultaneously results on the  Cohen-Macaulayness or  almost Cohen-Macaulayness of the associated graded  module by several authors, including Sally, Rossi-Valla, Wang, Elias,  Corso-Polini-Vaz Pinto, Huckaba, Marley and Polini-Xie.
\end{abstract}

\maketitle

\vspace{-0.2in}

\section{Introduction}

Given a Noetherian local ring $(R,\m)$ and an ideal $I$ of $R$, it is well-known that the associated graded ring ${\rm gr}_I(R)=\oplus_{n=0}^{\infty}I^n/I^{n+1}$ encodes algebraic and geometric properties of $I$. Indeed, ${\rm Proj}({\rm gr}_I(R))$ is the exceptional fiber of the blow-up of ${\rm Spec}(R)$ along the subvariety $V(I)$.
Strong efforts have been given in the last thirty years to detect  conditions on $R$ and $I$ which guarantee that ${\rm gr}_I(R)$ has  sufficiently high depth (more precisely, ${\rm gr}_I(R)$ being Cohen-Macaulay or almost Cohen-Macaulay),
 due to the reason that  high depth of the associated graded ring  forces the vanishing of its cohomology groups
and thereby allows one to  compute, or bound, relevant numerical invariants
such as the Castelnuovo-Mumford regularity or the number and degrees of the defining equations of the blow-up
(see for instance, \cite{JK} and \cite{JU}).

The classical method, originated from the pioneering work of Sally,
studies the interplay between the Hilbert coefficients of an $\m$-primary ideal  and
the depth of the associated graded ring. The idea is that extremal values of the Hilbert coefficients yield high depth of the
associated graded ring and, conversely, good depth properties encode all the information about the Hilbert function.

In~1967, Abhyankar proved that the  multiplicity   of a
$d$-dimensional Cohen-Macaulay local ring $(R, \m)$  can be written as
$e_0(\m)=\mu(\m)-d+K$ for some integer $K\geq 1$,
%equal to
%$\mu(\m)-d+1+(K-1)$,
where $\mu(\m)$ is the embedding dimension of $R$ \cite{AB}. %and $K\geq 1$ .
%\cite{AB}.
Since then, rings for which $e_0(\m)=\mu(\m)-d+1$ (respectively, $e_0(\m)=\mu(\m)-d+2$) have been
called {\it rings of minimal multiplicity} (respectively, {\it rings of almost minimal multiplicity}).
These notions were extended by Sally to stretched Cohen-Macaulay local rings by requiring
an Artinian reduction $R/J$, where $J$ is a minimal reduction of $\m$, to be stretched, i.e.,
 the ideal $(\m/J)^2$ is  a principal ideal
(see  \cite{S3} and \cite{RV3}). Sally studied   the Cohen-Macaulay and almost Cohen-Macaulay property
of the associated
graded ring ${\rm gr}_{\m}(R)$ for those classes of rings.
She proved that  ${\rm gr}_\m(R)$ is always Cohen-Macaulay if $R$ has minimal
multiplicity \cite{S1}. % \cite{S1}.
%Even if the
%multiplicity is almost minimal, i.e., $K=2$, ${\rm gr}_\m(R)$ is
%Gorenstein provided the ambient ring  is Gorenstein \cite{S2}.
Unfortunately, for arbitrary Cohen-Macaulay local rings of almost minimal
multiplicity (as well as stretched Cohen-Macaulay local rings), %i.e.,  $K=2$,
 the Cohen-Macaulay property of ${\rm gr}_\m(R)$ fails to
hold \cite{S4}.
However, Sally conjectured that if $R$ has almost minimal multiplicity then ${\rm gr}_\m(R)$ is almost Cohen-Macaulay.
This conjecture was proved thirteen years later
by Rossi and  Valla
\cite{RV1}, and, independently, by  Wang~\cite{W}.
Later, in 2001, Rossi and Valla extended the notion of stretched Cohen-Macaulay local rings of Sally to stretched $\m$-primary ideals, and proved an extended version of Sally's conjecture by giving  conditions for  the associated graded rings of stretched $\m$-primary ideals to be almost Cohen-Macaulay \cite{RV3}.

During the last twenty years, another method has also been developed to study the depth of the associated
graded rings of general ideals 
(see \cite{HH1},   \cite{T},  \cite{GH}, \cite{GY1},  \cite{GYN}, \cite{JU}, \cite{Gh},   \cite{AGH},     and related papers).
Essentially, this method requires  the ideal $I$ to have certain residual intersection properties (automatically satisfied if  $I$ is $\m$-primary) and sufficiently many powers of $I$ to have high  depth, where the number of powers of $I$ required to have high depth depends on the reduction number of~$I$. Since the depth   drops dramatically for higher powers of $I$, this method works well if $I$ has ``relatively small" reduction number.

 Recently, Polini and Xie \cite{PX1} proved Sally's conjecture for a class of ideals that are not necessarily  $\m$-primary
 by combining
 the techniques of $\m$-primary ideals with tools from  residual intersections.
 They extended the notions of  minimal and almost minimal multiplicity to arbitrary ideals by defining the concepts of minimal and almost minimal $j$-multiplicity, and proved that,  under certain residual assumptions,  the associated graded ring
is Cohen-Macaulay (respectively, almost Cohen-Macaulay) for  ideals  having minimal
$j$-multiplicity (respectively, almost minimal $j$-multiplicity).\\
\\
In the present paper,  we propose  a more general numerical condition on $I$ that extends
the classical stretched  $\m$-primary ideals defined by Sally, Rossi and Valla,
as well as the minimal and almost minimal $j$-multiplicity introduced by Polini and Xie.
Let $(R, \m)$ be a Noetherian local ring of dimension $d$ with infinite residue field
(we can enlarge the residue field to be infinite
 by replacing $R$ by $R(z)=R[z]_{\m R[z]}$, where $z$ is a variable over $R$).
Let $I$ be an $R$-ideal of maximal analytic spread. Recall that the quotient ring of $R$ modulo a general $d-1$-geometric residual  intersection of $I$ is a $1$-dimensional Noetherian local ring and the ideal generated by the image of  $I$ in this quotient ring is  primary to its maximal ideal (thus it allows us to reduce to the setting of the classical $\m$-primary case). Roughly speaking,
 the ideal    $I$ is {\it $j$-stretched} if  it generates a stretched $\m$-primary ideal
 (in the sense of Rossi and Valla) after reducing to this   $1$-dimensional Noetherian  local ring.
Since $j$-stretched ideals are not necessarily $\m$-primary,  to study them,
 we adopt  the tools of general elements, residual intersection theory
  (a generalization of linkage), and  the notion of  $j$-multiplicity
   (introduced by Archilles and Manaresi as a higher dimensional version of the Hilbert multiplicity \cite{AM}). We refer to Section 2 in the following for a more detailed elaboration of $j$-stretched ideals.

  One of the most important features of $\m$-primary ideals $I$  comes from the fact that they have finite colength
  $\lambda (R/I)$, which makes many tools and computations applicable.
  When $I$ is arbitrary,  one would like to reduce to the case of finite colength
by factoring out a sequence of  elements. But the problem is that the colength depends on
 the choice of a sequence of elements. To overcome this difficult, we develop a ``Specialization Lemma"
  (see Lemma \ref{specializ} in Section 3) stating that,
if we choose a sequence of
general elements, we will have a fixed colength. Moreover, if $R$ is equicharacteristic, general specializations yield the smallest colength. We apply this lemma  to study  the index of nilpotency and the stretchedness property. For instance, we generalize to non $\m$-primary ideals $I$ a proposition proved by Fouli \cite[Proposition~5.3.3]{F}, stating
that, over an equicharacteristic  Cohen-Macaulay local ring, the index of nilpotency of $I$ does not depend on the general minimal reduction, and general minimal reductions always achieve the largest possible index of nilpotency.  We also answer a question of Sally (see \cite{S3}) asking: to what extent does the classical notion of stretchedness depend on  minimal reductions? As a consequence of  Lemma 3.1, one obtains the answer that the stretchedness property does not depend on the choice of a general minimal reductions. We remark here that Lemma 3.1 may be of independent interest to the reader, 
as it can also be interpreted as an upper-semicontinuity result of lengths.

We now state our main theorems. For any $j$-stretched ideal $I$ with certain residual intersection properties
(automatically satisfied if $I$ is $\m$-primary),
we prove in Theorem \ref{CM} that the associated graded ring ${\rm gr}_I(R)$ is Cohen-Macaulay if and only if the reduction number of $I$ and its index of nilpotency  coincide.
The second main result, Theorem \ref{2},  provides a sufficient condition for the associated graded rings of
$j$-stretched ideals  to be almost Cohen-Macaulay   and is a  generalized version of Sally's conjecture.
 Our criteria are purely numerical and could be applied to ideals with arbitrarily large reduction numbers. Indeed, we provide  a class of $j$-stretched ideals having arbitrarily  large reduction number such that the Cohen-Macaulay property of the associated graded ring follows from our main theorem, but from no previous result in the literature (see Example \ref{ex1} in Section 4).\\
\\
The structure of the paper is the following: In Section 2,  we define the concept of $j$-stretched ideals and recall definitions of residual intersections.
Section 3 is rather technical and includes the Specialization Lemma (Lemma \ref{specializ}) as well as several results on the structure of $j$-stretched ideals. Section 4 contains our two main theorems, giving numerical characterizations of the  Cohen-Macaulayness and almost Cohen-Macaulayness of the associated graded rings of $j$-stretched ideals (Theorem \ref{CM} and Theorem \ref{2}).  Among the applications of these theorems, we recover the main results of \cite{PX1} and \cite{RV3}, and prove, under additional assumptions, that the associated graded rings of ideals having almost-almost minimal $j$-multiplicity are almost Cohen-Macaulay (Corollary \ref{almalm}).

Finally, in Section 5, we prove the non-trivial fact that $j$-stretched ideals do generalize stretched $\m$-primary ideals (Theorem \ref{AN} and Corollary \ref{mprim}). Although in general these two notions are different, we provide a sufficient condition for them to coincide (Proposition \ref{equiv}). As an application, we answer  a question raised by Sally (Corollary \ref{Sally}).

For the sake of clarity, we will only focus on the case of associated graded rings ${\rm gr}_I(R)$, although all the definitions and results can be extended and proved for associated graded modules ${\rm gr}_I(M)$, where $M$ is a finite module over $R$.

\section{The Main Definitions}

In this section we  fix the notation, introduce $j$-stretched ideals and recall some definitions and facts from residual intersection theory.

Throughout this paper, we always assume that $(R, \m, k)$ is a Noetherian local ring of dimension $d$ with maximal ideal $\m$ and infinite residue field  $k=R/\m$ (possibly, after
enlarging the residue field $k$).
\begin{itemize}
\item The {\em associated graded ring} of an $R$-ideal $I$ is defined as $G={\rm gr}_I(R)=\oplus_{n=0}^{\infty}I^n/I^{n+1}$.
\item An ideal $J\subseteq I$ is called a {\em reduction} of $I$  if there
exists a non-negative integer $r$ such that $I^{r+1}=JI^r$.  The least  $r$ such that $I^{r+1}=JI^r$ is denoted by $r_J(I)$, and called the {\em reduction number of $I$ with respect to $J$}.
\item A reduction is called {\it minimal} if it is minimal with respect to inclusion.
\item  The {\it reduction number} $r(I)$ of $I$ is defined as
 ${\rm min}\{r_J(I)\,|\, J \, {\rm a\, minimal\, reduction\, of \,} I\}$.
 \item Finally, since $|k|=\infty$, minimal reductions of $I$ always exist, and every minimal reduction of $I$ can be minimally generated by the same number of generators, $\ell(I)$, dubbed the {\em analytic spread}  of $I$. Since the inequality $\ell(I)\leq d={\rm dim}\,R$ always holds, one says that $I$ has {\em maximal analytic spread} if $\ell(I)=d$.
\end{itemize}

 Write $I =(a_1, \ldots,a_s)$ and  $x_i =\sum_{j=1}^s \lambda_{ij}a_j$ for $i=1,\dots,t$ and $(\lambda_{ij})\in R^{ts}$.
 The elements $x_1,\dots,x_t$ are {\em general} in $I$ if there exists a Zarisky dense open subset $U$ of
 $k^{ts}$ such that  $(\overline{\lambda_{ij}}) \in U$, where $^{^{\tratto}}$   denotes images in the residue field $k$. 
The relevance of this notion in our analysis comes from the following facts:\begin{itemize}
\item[(a)] General elements in $I$ always form a superficial sequence for $I$ (\cite[Corollary~2.5]{X});
\item[(b)]  If $t=\ell(I)$ then a sequence $x_1,\dots,x_t$ of  general elements in $I$ forms a minimal reduction of $I$ with reduction number  $r(I)$  (see for instance \cite[Corollary~2.2]{T2});
\item[(c)]  One can use general elements to compute the $j$-multiplicity of the ideal $I$ (\cite[Proposition~2.1]{PX1}).
\end{itemize} 

{\bf Notation.} From now on, we assume $I$ has maximal analytic spread  $\ell(I)=d$,
and $J$ is a {\it general} minimal reduction of $I$, i.e.,
$J=(x_1, \ldots, x_d)$, where $x_1, \ldots, x_d$ are
$d$ general elements in $I$.

We write
$\overline{R}=R/J_{d-1}: I^{\infty}$, where $J_{d-1}: I^{\infty}=\{b\in R\,|\, \exists\, \delta>0 \,{\rm such\, that}\, b \cdot I^{\delta}\subseteq J_{d-1}\}$ and $J_{d-1}=(x_1,\ldots,x_{d-1})$. We   use
 $^{\tratto}$  to denote images in the quotient ring  $\overline{R}$.
 
Note that $\overline{R}\neq 0$ if and only if $\ell(I)=d$ \cite{NU}.
Indeed in this case $\overline{R}$ is an 1-dimensional  Cohen-Macaulay local ring
and $\overline{I}$ is primary to the maximal ideal $\overline{\m}$.

 Therefore, one can define  the  Hilbert function of $I$ on $\overline{R}$:
$$
HF_{I,\,\overline{R}}(n)=\lambda (\overline{I^n}/\overline{I^{n+1}}), \,\,\,{\rm for}\,\,n\geq 0,
$$
which is independent of a choice of  the general minimal reduction $J$ (by Lemma \ref{specializ} in Section 3, or see \cite{PX2}).
The $j$-multiplicity of $I$ is computed as follows (see for instance \cite[Proposition~2.1]{PX1})
$$
j(I)=e(I, \overline{R})=\lambda(\overline{R}/x_d \overline{R})=\lambda(\overline{I}/x_d\overline{I})=\lambda(\overline{I}/\overline{I^2})+\lambda(\overline{I^2}/x_d\overline{I}).
$$

We are now ready to give the definition of $j$-{\rm stretched} ideals.
\begin{Definition}\label{Def}
Let $R$,  $I$ and $J$ be the same  as above.
We say that $I$ is $j$-{\rm stretched}  if
$$\lambda(\overline{I^2}/x_d\overline{I}+\overline{I^3})\leq 1.$$
\end{Definition}
Observe that if $I$ is a $j$-stretched ideal  then the Artinian reduction  $\overline{R}/(\overline{x_d})$ possesses a stretched   Hilbert function with respect to $I$, i.e., 
  $$H_{\overline{I}/(\overline{x_d})}(2)=\lambda(\overline{I^2}/(\overline{x_d})\cap \overline{I}^2+\overline{I^3})\leq \lambda(\overline{I^2}/x_d\overline{I}+\overline{I^3})
\leq 1.$$
Furthermore, if   $I$ has {\it minimal $j$-multiplicity} (respectively, {\it almost minimal $j$-multiplicity}), i.e.,   the length $\lambda(\overline{I^2}/x_d\overline{I})=0$ (respectively, $\lambda(\overline{I^2}/x_d\overline{I})\leq 1$) (see \cite{PX1}), then it
is easy to see that $I$ is $j$-stretched; hence
the notion of  $j$-stretched ideals includes ideals having minimal or
almost minimal $j$-multiplicity.
In particular, every $\m$-primary ideal having  minimal or almost minimal multiplicity  is $j$-stretched.
We will see in Section 5 that $j$-stretched ideals also generalize stretched $\m$-primary ideals (Corollary \ref{mprim}).

The property of $j$-stretchedness is preserved under faithfully flat ring extensions. Indeed  let  $(S,\n)$ be a Noetherian local ring that is  flat over  $R$ with $\m S=\n$. If $I$ is $j$-stretched then $IS$ is a $j$-stretched ideal of $S$.
Therefore
the property of being $j$-stretched still holds after passing to the completion of $R$,
or  enlarging the residue field. 
\medskip

We now recall some definitions and facts from the theory of residual intersections (see for instance \cite{U},
 \cite{JU} and \cite{PX1}), which will be used frequently in the rest of the paper.
 \begin{itemize}
 \item An ideal $I$ has the {\it   $G_{t}$ condition} if $I_p$ can be generated by $i$ elements for every
  $\p \in V(I)$ with ${\rm dim}\,R_{\p} = i < t$.
 \item Let $H_{t}=(x_1, \ldots, x_t)$, where $x_1,\ldots,x_t$ are elements in $I$. Define $H_{t}: I=
 \{b\in R\,|\, b\cdot I \subseteq H_t\}.$
  One says that  $H_t:I$ is a {\it  $t$-residual intersection } of $I$
 if $I_\p=(x_1, \ldots, x_{t})_\p$ for every $\p\in {\rm Spec}(R)$ with ${\rm dim}\,R_\p\leq t-1$.
 \item A  $t$-residual intersection $H_t:I$ is called a {\it geometric $t$-residual intersection} of $I$
 if, in addition,  $I_\p=(x_1, \ldots, x_{t})_\p$ for every
 $\p\in V(I)$ with ${\rm dim}\,R_{\p}\leq t$.
 \item It is well-known that, if $I$  satisfies the  $G_t$ condition,
 then for general elements $x_1, \ldots,  x_t$ in $I$ and each $0\leq i< t$,
 the ideal $H_i : I$ is a geometric $i$-residual intersection of $I$,  and $H_t : I$ is a $t$-residual intersection of $I$ (see \cite{U} and \cite[Lemma~3.1]{PX1}).
 \item Finally, let $R$ be   Cohen-Macaulay, the ideal $I$ has the {\it Artin-Nagata property} ${AN^-_t}$ if, for every $0\leq i\leq t$ and every geometric $i$-residual intersection $H_i: I$ of $I$, one has that $R/H_i:I$ is   Cohen-Macaulay \cite{U}.
  \end{itemize}
Assume $R$ is Cohen-Macaulay.  We now list  a
few classes of ideals satisfying the above residual properties.
\begin{itemize}
\item[($\star$)]  The properties $G_d$  and $AN_{d-2}^-$ are automatically satisfied by any $\m$-primary ideal of $R$.
\item[($\star$)] Assume ${\rm dim}(R/I)=1$. Then the property $AN_{d-2} ^-$ is trivially satisfied by $I$. Furthermore, $I$ has the $G_d$ condition if and only if $I$ is generically a complete intersection.
\item[($\star$)] Recall that $I$ is {\it strongly Cohen-Macaulay} if all of the Koszul homology modules with respect to a
generating set of $I$ are Cohen-Macaulay $R/I$-modules. The property $AN_{d-2}^-$ is satisfied by any strongly
Cohen-Macaulay ideal $I$ which satisfies  the $G_d$ condition
     \cite{Hu2}. Examples of strongly Cohen-Macaulay ideals are complete intersections and, if $R$ is Gorenstein, any {\it licci} ideal $I$,  meaning that $I$ is in the linkage class of a complete intersection, which generalizes the classes of perfect ideals of
         grade two and Gorenstein ideals of grade three \cite{Hu}.
\item[($\star$)] Assume $R$ is Gorenstein. Then by linkage theory, the property $AN^-_{d-2}$ is satisfied by any Cohen-Macaulay ideal $I$ with ${\rm dim}(R/I)=2$, and, more generally, by any licci ideal $I$ (for instance, this follows by the above, the facts that the deformation of a licci ideal is licci, and any licci ideal $I$ has a deformation that has the $G_d$ property, and \cite[Lemma~1.13]{U}).
\end{itemize}

We now provide examples  of $j$-stretched  ideals in  Noetherian  local rings.
\begin{Example}\label{r}
Fix $r\geq 1$. Let $R=\mathbb{C}\llbracket{x,y,z}\rrbracket/(x,y)\cap (x^{r+1},z)=\mathbb{C}\llbracket{x,y,z}\rrbracket/(x^{r+1},xz,yz)$ and $I=(x,y)$. Then $R$ is
an $1$-dimensional Cohen-Macaulay local ring and $I$ is a Cohen-Macaulay prime ideal that has $\ell(I)=1$,  $G_1$ condition and $AN_{d-2}^-$ (automatically satisfied since $d=1$). Furthermore,  $I$  is $j$-stretched with reduction number $r$. If $r> 2$ then $I$ does not have almost minimal $j$-multiplicity.
\end{Example}
\begin{proof}
It is easy to see that $R$ is
an $1$-dimensional Cohen-Macaulay local ring and $I$ is a Cohen-Macaulay prime ideal
 that has $\ell(I)=1$,  $G_1$ condition and $AN_{d-2}^-$. We only need to show that $I$ is $j$-stretched with reduction number $r$.
 First notice that $\overline{R}=R/0: I^{\infty}=R/0:I=R/(x^{r+1}, z)\cong k\llbracket{x,y}\rrbracket/ (x^{r+1})$, and use $^{^{\tratto}}$ to  denote images in the quotient ring  $\overline{R}$.
 Let $f=\alpha x+\beta y$ be a general element in $I$. Then $J=(f)$ is a minimal reduction of $I$, hence
  $\beta\neq 0$ since otherwise $I/(f)$ can not have finite length. Replacing $y$ by $f$,
we may assume that $f=y$ and  the length
$$
\lambda(\overline{I^2}/f\overline{I}+\overline{I^{3}})=\lambda [(x,y)^2/(y(x, y)+(x, y)^3 +(x^{r+1}))]=\lambda [(x,y)^2/(y^2,xy,x^{r+1})]\leq 1,
$$
proving the $j$-stretchedness of $I$.  Notice that $y(x, y)^{r-1}+ (x^{r+1})\subsetneq (x,y)^{r}$ and $y(x, y)^{r}+ (x^{r+1})= (x,y)^{r+1}$, hence $r_{(\overline{y})} (\overline{I})=r$.\,
Since $ [0:I]\cap I=0$,  the equality
 $(\overline{y})\overline{I}^r=\overline{I}^{r+1}$  implies that
$$
I^{r+1}\subseteq yI^{r} + [0:I]\cap I^{r+1}=y I^{r},
$$
which gives $r_{(y)}(I)\leq r$. Our desired result follows since
$r_{(y)}(I)\geq r_{(\overline{y})} (\overline{I})= r$.

Finally, since $
\lambda(\overline{I^2}/f\overline{I})=\lambda [(x,y)^2/(y(x, y) +(x^{r+1}))]=r-1,
$
the ideal  $I$ does not have almost minimal $j$-multiplicity if $r> 2$.
\end{proof}

\begin{Example}\label{r2}
Fix $r\geq 1$. Let $R=k\llbracket{x,y,z}\rrbracket/(x^r-yz,y^r-xz, xyz)\cap (x^{r+1}-y^{r+1},z)$ and $I=(x,y)$. Then $R$ is an
$1$-dimensional Noetherian local ring (not Cohen-Macaulay) and $I$ is a Cohen-Macaulay prime ideal that has $\ell(I)=1$, $G_1$ condition and $AN_{d-2}^-$ (automatically satisfied since $d=1$). Furthermore,  $I$ 
is $j$-stretched with reduction number $r$. Write $\overline{R}=R/0:I^{\infty}=
R/(x^{r+1}-y^{r+1},z)\cong k\llbracket{x,y}\rrbracket/ (x^{r+1}-y^{r+1})$. One has $\lambda(\overline{I^t}/J\overline{I^{t-1}}+\overline{I^{t+1}})=1$ for all $2\leq t\leq r$ and 
 a general minimal reduction $J$ of $I$. This implies that $I$ does not have almost minimal $j$-multiplicity if $r> 2$.
\end{Example}

We now exhibit monomial ideals and ideals of points in $\mathbb P^N$ that are $j$-stretched.
\begin{Example}\label{pts}
Assume $I$ is the defining ideal of either  $($i$)$  a set of $n=6$ general points in $\mathbb P^2$, or  $($ii$)$  a set of $n=4$ or $n=5$ general points in $\mathbb P^3$. Then $I$ is a $j$-stretched Cohen-Macaulay ideal which is generated in a single degree, has  $\ell(I)=1$,  $G_1$ condition  and $AN^-_{-1}$.
\end{Example}

\begin{Example}\label{mon1}
Let $I$ be either the ideal $(a^2b^2,a^2c^2,abc^2,b^3c)$ or $(a^3,a^2b,b^2c,ac^2)$ in $R=k[a,b,c]$. Then $I$ is a height 2 ideal that is not unmixed $($indeed, the maximal ideal is an associated prime ideal of $I$). Computations show that $\lambda(\overline{I^t}/x_3\overline{I^{t-1}}+\overline{I^{t+1}})=1$ for $2\leq t\leq 4$, where $\overline{R}=R/(x_1, x_2):I^{\infty}$,  $x_1, x_2$ and $x_3$ are  general elements in $I$. One has that $\ell(I)=3$ and $I$ is $j$-stretched.
Since ${\rm dim}(R/I)=1$, the property $AN^-_{1}$ is automatically satisfied. Moreover, the second ideal has the $G_3$ condition  because  $I$ is generically a complete intersection.
\end{Example}

\begin{Example}
Let  $I=(a^2b^2,a^2c^2,abc^2,b^2c^2,a^2bc)\subseteq R=k[a,b,c]$. Then  $I$
 is a Cohen-Macaulay ideal that is generated in a single degree with $AN^-_{1}$.
 The equality
$\lambda(\overline{I^2}/x_3\overline{I}+\overline{I^3})=1$, where $\overline{R}=R/(x_1, x_2):I^{\infty}$,  $x_1, x_2$ and $x_3$ are  general elements in $I$,  implies  that $\ell(I)=3$ and $I$ is $j$-stretched.
\end{Example}

\section{Structure of $j$-stretched ideals}

In this section we introduce techniques to study the structure of $j$-stretched ideals. These technical results will be employed in the next section to prove our main theorems.
We start  with the proof of the Specialization Lemma (Lemma \ref{specializ}).  To state it, we need to recall the notion of specialization of modules, as introduced by Nhi and Trung \cite{NT}.

Let $S=R[\underline{z}]$, where  $ \underline{z}=z_1, \ldots,  z_{t}$ are variables  over the Noetherian local ring $(R,\m, k)$ (recall   $k$ is infinite and $d={\rm dim}\,R$). Let $M^{\prime}$ be a finite $S$-module.
 Let $\phi:
S^f \rightarrow S^g \rightarrow  0$ be a finite free presentation of
$M^{\prime}$ and let $A=(a_{ij}[\underline{z}])$ be a matrix representation of $\phi$.
For any vector $\underline{\alpha}=(\alpha_1, \ldots, \alpha_t)\in R^t$,
let $A_{\underline{\alpha}}:=(a_{ij}[\underline{\alpha}])$ and
$\phi_{\underline{\alpha}}: R^f\rightarrow R^g\rightarrow  0$ be
the corresponding map defined by $A_{\underline{\alpha}}$.
One says that $\phi_{\underline{\alpha}}$ is a {\it specialization} of $\phi$.
A {\it specialization} of $M^{\prime}$ is defined to be
$M^{\prime}_{\underline{\alpha}}:={\rm Coker}(\phi_{\underline{\alpha}})$.
By \cite{NT}, $M^{\prime}_{\underline{\alpha}}$ does not depend
 on (up to isomorphisms) the choice of $\phi$ and $A$.
 The vector $\underline{\alpha}\in R^t$ is said to
 be {\it general} (equivalently,
 the specialization $M^{\prime}_{\underline{\alpha}}$ is {\it general})
 if the image $\overline{\underline{\alpha}}=(\overline{\alpha}_1, \ldots, \overline{\alpha}_t)\in U$,
 where $U$ is some Zariski dense open subset of $k^t$.

\begin{Lemma}\label{specializ}$[$Specialization Lemma$]$
Let $S$ be  as above. Let  $M$ be a finite $R$-module and $M^{\prime}=M\otimes_R S$.
  Let $N^{\prime} \subseteq M^{\prime}$ be a submodule
such that
$\lambda_{S_{\m S}}(M^{\prime}_{\m S}/N^{\prime}_{\m S})=\delta\in \mathbb N_0 .$ 
 Then
\begin{itemize}
\item[(a)]  For a general vector $\underline{\alpha} \in R^t$,  one has that $\lambda_R(M/N^{\prime}_{\underline{\alpha}})=\delta$.
\item[(b)]   Assume $R$ is equicharacteristic and fix any vector $\underline{\alpha_0}\in R^t$. Then for a general vector $\underline{\alpha}  \in R^t$, one has that $\delta=\lambda_R(M/N^{\prime}_{\underline{\alpha}})\leq \lambda_R(M/N^{\prime}_{\underline{\alpha_0}})$.
\end{itemize}
\end{Lemma}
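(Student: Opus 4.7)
My plan for both parts is to work with a prime filtration of $T := M'/N'$ as a finite $S$-module, combined with a $\operatorname{Tor}$/Koszul argument. For (a), since $T_{\m S}$ has finite length $\delta$, the only prime of $S$ contained in $\m S$ that appears in $\operatorname{Supp}_S(T)$ is $\m S$ itself. Thus in any prime filtration $0 = T_0 \subset T_1 \subset \cdots \subset T_n = T$ with quotients $T_i/T_{i-1} \cong S/\p_i$, exactly $\delta$ of the $\p_i$ equal $\m S$, and every other $\p_i$ satisfies $\p_i \not\subseteq \m S$. For each such ``bad'' prime pick $f_i \in \p_i$ whose image $\overline{f_i}$ in $k[\underline{z}] = S/\m S$ is nonzero, and let $U \subseteq k^t$ be the dense Zariski-open complement of $\bigcup_i V(\overline{f_i})$.

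For any $\underline{\alpha}$ with $\overline{\underline{\alpha}} \in U$, each $f_i(\underline{\alpha}) \in R$ is a unit. Since $f_i$ annihilates $S/\p_i$ but acts as a unit on $S/(\underline{z}-\underline{\alpha}) \cong R$, it kills $\operatorname{Tor}_j^S(S/(\underline{z}-\underline{\alpha}), S/\p_i)$ in every degree, forcing this Tor to vanish whenever $\p_i \not\subseteq \m S$. For $\p_i = \m S$, the Koszul complex of the regular sequence $(\underline{z}-\overline{\underline{\alpha}})$ on $k[\underline{z}]$ gives $\operatorname{Tor}_j^S(R, S/\m S) = 0$ for $j > 0$ and $(S/\m S)\otimes_S R = k$. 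An induction on $i$ via the Tor long exact sequence of $0 \to T_{i-1} \to T_i \to S/\p_i \to 0$ yields $\operatorname{Tor}_j^S(R, T) = 0$ for $j > 0$ and $\lambda_R(M/N'_{\underline{\alpha}}) = \lambda_R(T/(\underline{z}-\underline{\alpha})T) = \delta$, proving (a).

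For part (b), combine (a) with a $1$-parameter deformation. Pick a general direction $\underline{v} \in R^t$ (so $\overline{\underline{v}} \neq 0$) and make $S' := R[s]$ an $S$-algebra via $\psi: z_i \mapsto \alpha_{0,i} + sv_i$. Set $W := T \otimes_S S'$; for every $\beta \in R$, $W/(s-\beta)W \cong T/(\underline{z}-\underline{\alpha_0}-\beta\underline{v})T$, and in particular $W/sW \cong M/N'_{\underline{\alpha_0}}$. Tensoring the prime filtration of $T$ with $S'_{\m S'}$ and using that (i) each $\psi(f_i)$ remains a unit in $S'_{\m S'}$ for general $\underline{v}$ (so the corresponding quotients vanish after tensoring), and (ii) $(S/\m S)\otimes_S S'_{\m S'} = \kappa(\m S') = k(s)$ with vanishing higher $\operatorname{Tor}$ (by $R$-flatness of $S' = R[s]$), one concludes that $W_{\m S'}$ has length exactly $\delta$ over $S'_{\m S'}$.

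If $\lambda_R(W/sW) = \infty$, part (b) is trivial; otherwise, a standard support argument forces $\operatorname{Supp}_{S'}(W) \cap V(s) \subseteq \{(\m,s)\}$, so both $W/sW$ and $\operatorname{Ann}_W(s)$ have finite $R$-length. The Euler characteristic
$$\chi(s; W) \;=\; \lambda_R(W/sW) - \lambda_R(\operatorname{Ann}_W(s)) \;=\; \sum_j \chi(s; S'/\q_j)$$
is then additive along a prime filtration of $W$ over $S'$: each $\q_j = \m S'$ contributes $\chi(s; k[s]) = 1$, each $\q_j \ni s$ contributes $0$, and every other $\q_j$ contributes $\lambda_R(S'/(\q_j+(s))) \geq 0$ (as $s$ is a non-zero-divisor on the domain $S'/\q_j$). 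Therefore $\chi(s;W) \geq \delta$, so $\lambda_R(M/N'_{\underline{\alpha_0}}) \geq \chi(s;W) \geq \delta$. The main technical obstacle is step (ii): the exact length computation for $W_{\m S'}$ after the deformation, which is where the equicharacteristic hypothesis on $R$ enters the argument.
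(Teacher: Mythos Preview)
Your proof of part~(a) is correct and takes a cleaner, more conceptual route than the paper. The paper argues by induction on $\delta$, explicitly producing at each step an element $x\in M$ whose image generates a length-one subquotient; you instead take a prime filtration of $T=M'/N'$ over $S$ and use the Koszul resolution of $R=S/(\underline z-\underline\alpha)$ to show that each factor $S/\p_i$ with $\p_i\not\subseteq\m S$ contributes nothing (in every $\operatorname{Tor}$ degree) while each factor $S/\m S$ contributes exactly $k$. Your unit argument on $\operatorname{Tor}$ is the right mechanism and avoids all of the paper's ad hoc element-chasing.

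Your proof of part~(b) is also correct and genuinely different. The paper crucially uses that $R$ is equicharacteristic to regard $M'/N'\otimes_S S/\m^{t_0}S$ as a finite $k[\underline z]$-module, then reads the specialized length as a minimal number of generators and invokes upper-semicontinuity via Fitting ideals. Your one-parameter deformation $W=T\otimes_S R[s]$ together with the Euler characteristic $\chi(s;W)$ and a prime filtration of $W$ over $S'=R[s]$ gives the inequality $\lambda_R(M/N'_{\underline{\alpha_0}})=\lambda_R(W/sW)\ge\chi(s;W)\ge\delta$ directly. One comment: your closing sentence is misleading. Step~(ii) is \emph{not} where equicharacteristic enters --- your own justification (``by $R$-flatness of $S'=R[s]$'') is exactly right: from the flat base change $R\to S$ one gets $\operatorname{Tor}_j^S(S',S/\m S)\cong\operatorname{Tor}_j^R(S',k)$, and this vanishes for $j>0$ since $R[s]$ is $R$-flat, with $\operatorname{Tor}_0=k[s]$; no hypothesis on the characteristic is used anywhere in your argument. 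So in fact your approach yields~(b) without the equicharacteristic assumption, which is stronger than what the paper proves.
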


\demo
We may pass to the $\m$-adic completion of $R$ to assume that $R\cong A/H$, where $A$ is a regular local ring.
We may also replace $R$ by $A$ to assume that $R$ is a regular local ring, and therefore $M^{\prime}$ is a finite module of a polynomial ring over a regular local ring. 
We use induction on $\delta$ to prove part~(a). Notice that this statement holds if $\delta\leq 1$. Indeed, if $\delta=0$, i.e.,
$\lambda_{S_{\m S}}(M^{\prime}_{\m S}/N^{\prime}_{\m S})=0$,
then there exists a polynomial $f\in S\setminus \m S$ such that $fM^{\prime}\subseteq N^{\prime}$. Let $\overline{f}$ be the image of $f$ in
$k[z_1, \ldots, z_t]$ and notice that $\overline{f}\neq 0$. Thus $U=D(\overline{f})$
is a Zariski dense open subset of $k^t$. If $(\overline{\underline{\alpha}})\in U$ then $f(\underline{\alpha})$ is a unit
in $R$. Thus $f(\underline{\alpha})M=(fM^{\prime})_{\underline{\alpha}}\subseteq N^{\prime}_{\underline{\alpha}}$ implies
$M\subseteq N^{\prime}_{\underline{\alpha}}$.

We now consider the case $\delta=1$, i.e., $M^{\prime}_{\m S}/N^{\prime}_{\m S}\cong k(\underline{z})$. In this case there exists $\xi \in M\backslash N^{\prime}$ such that $M^{\prime}_{\m S}=\xi S_{\m S}+N^{\prime}_{\m S}$ and $\xi \m S_{\m S}\subseteq N^{\prime}_{\m S}$. Hence, there exists a polynomial $f\in S\setminus \m S$ such that $
fM^{\prime}\subseteq \xi S+N^{\prime}$ and $f\xi \m S  \subseteq N^{\prime}$. Again $0\neq\overline{f}$ is the image of $f$ in
$k[z_1, \ldots, z_t]$ and $U=D(\overline{f})$ is a Zariski dense open subset of $k^t$. If $(\overline{\underline{\alpha}})$ lies in $U$ then $f(\underline{\alpha})$ is a unit in $R$, and therefore we have $M=M^{\prime}_{\underline{\alpha}}=\xi R+ N^{\prime}_{\underline{\alpha}}$ and $\xi \m \subseteq N^{\prime}_{\underline{\alpha}}$.

 Set $I={\rm Ann}_S(\xi S+ N^{\prime})/N^{\prime}$ and notice that $I_{\m S}=\m S_{\m S}$. By  the properties of general specialization (see \cite{NT}, which still hold because we are over a regular local ring),  there exists a Zariski dense open subset $V$ of $k^t$ such that for every $\overline{{\underline{\alpha}}} \in V$ one has
$$M/N^{\prime}_{\underline{\alpha}}=M^{\prime}_{\underline{\alpha}}/N^{\prime}_{\underline{\alpha}} = \xi R+ N^{\prime}_{\underline{\alpha}}/ N^{\prime}_{\underline{\alpha}}\cong
 R/{\rm Ann}_R[(\xi R+ N^{\prime}_{\underline{\alpha}})/N^{\prime}_{\underline{\alpha}}]
\cong [S/I]_{\underline{\alpha}}.$$
Therefore, we have
$$\begin{array}{lll}
\lambda_R(M/N^{\prime}_{\underline{\alpha}}) 
&= \lambda_R([S/I]_{\underline{\alpha}}) &=\lambda_R(S/I\otimes_S S/(\underline{z}-\underline{\alpha}))\\
%&=\lambda(S/I\otimes_S S/(\underline{z}-\underline{\alpha})\otimes_{S_f}S_f) &
& =\lambda_R(S/I\otimes_S S_{\m S}/(\underline{z}-\underline{\alpha})S_{\m S})
& =\lambda_R(S_{\m S}/I_{\m S}\otimes_{S_{\m S}} S_{\m S}/(\underline{z}-\underline{\alpha})S_{\m S})\\
& =\lambda_R(S_{\m S}/\m S_{\m S} \otimes_{S_{\m S}} S_{\m S}/(\underline{z}-\underline{\alpha})S_{\m S})
& =\lambda_R(S/\m S \otimes_{S} S/(\underline{z}-\underline{\alpha})S\otimes_{S}S_{\m S})\\
& =  \lambda_R(k) & = 1.
\end{array}$$

We may then assume $\delta>1$ and assertion (a) holds for $\delta-1$. For every element $x\in M$,  write $x^{\prime}=x\otimes 1$ in $M\otimes_R S=M^{\prime}$. Notice that if every element $x\in M$ has the property that $\frac{x^{\prime}}{1} \in N^{\prime}_{\m S}$, then $M^{\prime}_{\m  S}=N^{\prime}_{\m  S}$, showing that $\delta=0$, which is a contradiction.

Hence, there exists an element $x_0\in M$ with $\frac{x_0^{\prime}}{1}\notin N^{\prime}_{\m  S}$.
We claim that we can choose $x\in M$ with the property that $\frac{x^{\prime}}{1}\in M^{\prime}_{\m  S}\setminus N^{\prime}_{\m S}$ and $x^{\prime}\m  S_{\m  S}\subseteq N^{\prime}_{\m  S}$.
Let $\gamma={\rm max}\{n \in \mathbb N\,|\, x_0^{\prime}\m^{n}S_{\m  S}\not\subseteq N^{\prime}_{\m  S} \}$.
Notice that $0\leq \gamma \leq \delta-1$.
If for every element $a\in \m^{\gamma}R$ we have $\frac{(ax_0)^{\prime}}{1} \in N^{\prime}_{\m  S}$, then $x_0^{\prime}\m^{\gamma}S_{\m  S}\subseteq N^{\prime}_{\m  S}$, that is a contradiction.
Hence, there exists an element $a\in \m ^{\gamma}R$ with $\frac{(ax_0)^{\prime}}{1} \in M^{\prime}_{\m  S} \setminus N^{\prime}_{\m  S}$, but $(ax_0)^{\prime} \m  S_{\m  S} \subseteq N^{\prime}_{\m  S}$. Since $a\in \m^{\gamma}$ and $x_0 \in M$, it follows that $x=ax_0 \in M$ and has the property that $\frac{x^{\prime}}{1}\in M^{\prime}_{\m  S}\setminus N^{\prime}_{\m  S}$ and $x^{\prime}\m  S_{\m  S} \subseteq N^{\prime}_{\m  S}$.
Now, set $N_{\delta-1}^{\prime}=N^{\prime} + x^{\prime}S\subseteq M'$. By construction, we have  $$\lambda_{S_{\m S}}((N_{\delta-1}^{\prime}/N^{\prime})_{\m  S})= 1.$$ Since
$(N_{\delta-1}^{\prime}/N^{\prime})_{\m  S}\cong (x^{\prime}S/(x^{\prime}S \cap N^{\prime}))_{\m  S}$ and $x'S=xR\otimes_R S$ for an $R$-submodule $xR\subseteq M$, by the case of $\delta=1$,  we have
$\lambda_R((N_{\delta-1}^{\prime}/N^{\prime})_{\alpha})= 1$ for a general $\alpha$.
Also, by the above, we obtain
$\lambda_{S_{\m S}}((M^{\prime}/N_{\delta-1}^{\prime})_{\m  S})= \lambda_{S_{\m S}}((M^{\prime}/N^{\prime})_{\m  S})-\lambda_{S_{\m S}}((N_{\delta-1}^{\prime}/N^{\prime})_{\m  S})=\delta -1.$
Hence, by induction hypothesis, for a general vector $\alpha$, one has
$$ \lambda_R((M^{\prime}/N_{\delta-1}^{\prime})_{\alpha})= \delta -1,$$
proving that, for a general vector $\alpha$, we have $$\lambda_R(M/N^{\prime}_{\underline{\alpha}})=\lambda_R((M^{\prime}/N^{\prime})_{\alpha})=\lambda_R((M^{\prime}/N_{\delta-1}^{\prime})_{\alpha})+
\lambda_R((N_{\delta-1}^{\prime}/N^{\prime})_{\alpha})=\delta -1 + 1 = \delta.$$

\bigskip

To prove part (b), first notice that if $\lambda_R(M/N^{\prime}_{\underline{\alpha_0}})=\infty$ then there is nothing to prove. Hence we may assume $\lambda_R(M/N^{\prime}_{\underline{\alpha_0}})<\infty$.

Since $\lambda_{S_{\m S}}(M^{\prime}_{\m S}/N^{\prime}_{\m S})< \infty$ and $\lambda_R(M/N^{\prime}_{\underline{\alpha_0}})<\infty$, there exists a positive integer $t_0$ such that $\m^{t_0}M^{\prime}_{\m S}\subseteq N^{\prime}_{\m S}$ and $\m^{t_0}M\subseteq N^{\prime}_{\underline{\alpha_0}}$. Thus, there exists an element $f\in S \backslash \m S$ such that $f \m^{t_0} M^{\prime}\subseteq N^{\prime}$. Then for every $\underline{\alpha}\in D(\overline{f})$, one has that $\m^{t_0}M\subseteq N^{\prime}_{\underline{\alpha}}$.

Since $R$ is equicharacteristic,   $R$ contains its residue field $k$. Theretofore for every $\underline{\alpha}$ in $U_1=D(\overline{f})\cup \{\underline{\alpha_0}\}$, we have the following isomorphisms of $S/\m^{t_0}S$-modules:
{\small $$\begin{array}{lll}
M^{\prime}/N^{\prime} \otimes_{S}
S/\m^{t_0} S \otimes_{k[\underline{z}]_{(\underline{z}-\underline{\alpha})k[\underline{z}]}} k[\underline{z}]_{(\underline{z}-\underline{\alpha})k[\underline{z}]}/(\underline{z}-\underline{\alpha})
& \cong & M^{\prime}/N^{\prime} \otimes_{S}
S/\m^{t_0} S \otimes_{k[\underline{z}]} k[\underline{z}]/(\underline{z}-\underline{\alpha})\\
& \cong & S/\m^{t_0} S \otimes_{S}
 ( M^{\prime}/N^{\prime}\otimes_{k[\underline{z}]} k[\underline{z}]/(\underline{z}-\underline{\alpha}))\\
& \cong & S/\m^{t_0} S \otimes_{S}
  M/N^{\prime}_{\underline{\alpha}}\\
  &\cong & M/N^{\prime}_{\underline{\alpha}}
  \end{array}
$$}
where the last isomorphism follows because $ M/N^{\prime}_{\underline{\alpha}}$ is an $S/\m^{t_0} S$ module.
Notice that $M^{\prime}/N^{\prime} \otimes_S S/\m^{t_0}S$ is a finite $k[\underline z]$-module ($M^{\prime}/N^{\prime}$ is a finite $S$-module and $S/\m^{t_0}S \cong R/\m^{t_0}\otimes_{k}k[\underline z]$ is a finite $k[z]$-module).
Hence for every $\underline \alpha \in U_1$ we have
{\small$$\begin{array}{lll}
&&\mu_{k[\underline{z}]_{(\underline{z}-\underline{\alpha})k[\underline{z}]}}((M^{\prime}/N^{\prime} \otimes_{S}
S/\m^{t_0} S)_{(\underline{z}-\underline{\alpha})k[\underline{z}]})\\
& = & \lambda_{k[\underline{z}]_{(\underline{z}-\underline{\alpha})k[\underline{z}]}}(M^{\prime}/N^{\prime} \otimes_{S}
S/\m^{t_0}S \,\otimes_{k[\underline{z}]_{(\underline{z}-\underline{\alpha})k[\underline{z}]}} k[\underline{z}]_{(\underline{z}-\underline{\alpha})k[\underline{z}]}/(\underline{z}-\underline{\alpha}))\\
& = & \lambda_R(M/N^{\prime}_{\underline{\alpha}}),\end{array}$$}
where the last equality follows by the above isomorphisms, and the first equality holds by Nakayama's Lemma (that can be applied because $(M^{\prime}/N^{\prime} \otimes_{S}S/\m^{t_0} S )_{(\underline{z}-\underline{\alpha})k[\underline{z}]}$ is finite over $k[z]_{(\underline{z}-\underline{\alpha})k[\underline{z}]}$ by the above).
Set $q=\lambda_R(M/N^{\prime}_{\underline{\alpha_0}})$, then one has  the following Zariski open subset of $k^t$
$$U_2=\{ \underline{\alpha} \in k^t \,| \, \mu_{k[\underline{z}]_{(\underline{z}-\underline{\alpha})k[\underline{z}]}}((M^{\prime}/N^{\prime} \otimes_{S}
S/\m^{t_0} S )_{(\underline{z}-\underline{\alpha})k[\underline{z}]})\leq q\}
= k^t \setminus V({\rm Fitt}_q(M^{\prime}/N^{\prime}\otimes_{S} S/\m^{t_0} S )).
 $$
Notice $U_2$ is dense because $\underline{\alpha_0} \in U_2$.
Finally for any $\underline{\alpha} \in U=U_1\cap U_2$ which is again a  Zariski dense open subset of $k^t$,
 we have
$$\lambda_R(M/N^{\prime}_{\underline{\alpha}} )= \mu_{k[\underline{z}]_{(\underline{z}-\underline{\alpha})k[\underline{z}]}}((M^{\prime}/N^{\prime} \otimes_{S}
S/\m^{t_0} S)_{(\underline{z}-\underline{\alpha})k[\underline{z}]}) \leq  q=\lambda_R(M/N^{\prime}_{\underline{\alpha_0}}).$$

\QED
\bigskip

Lemma \ref{specializ} greatly enhances our ability to study arbitrary ideals and modules. Indeed we are going to apply it to study the index of nilpotancy of any ideal. For this purpose,  we recall that
  the {\it index of nilpotency} of an $R$-ideal $I$ with respect to a reduction $J$ is defined to be the integer
$$
s_J(I)={\rm min}\{n\, |\, I^{n+1}\subseteq J\}.
$$

In Proposition \cite[5.3.3]{F},  Fouli proved that the index of nilpotency of $\m$-primary ideals
 over an equicharacteristic Cohen-Macaulay local ring does
 not depend on the general minimal reduction, and general minimal
 reductions achieve the largest possible index of nilpotency.
 We generalize this result to non $\m$-primary ideals using  Lemma \ref{specializ} as a crucial ingredient
 (see    the following proposition).

\begin{Proposition}\label{nilpotency2}
Assume  $R$ is  Cohen-Macaulay. Let  $I$ be an $R$-ideal which has $\ell(I)=d$ and  the $G_d$ condition.
Let  $J$ be a general minimal reduction of $I$.  Then $s_J(I)$ does not depend on a choice of $J$.
Furthermore, assume $R$ is equicharacteristic, and   either $I$ is $\m$-primary or
 $I$ satisfies  $AN^-_{d-2}$,
${\rm depth}\,(R/I)\geq  1$, and $J\cap I^2=JI$. Let $H$ be any fixed minimal reduction of $I$.  Then $s_H(I)\leq s_J(I)$.
\end{Proposition}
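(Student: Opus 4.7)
The plan is to interpret $s_J(I)$ as the least $n$ for which a well-chosen colength attains its maximum value $j(I)$, and then invoke Lemma~\ref{specializ}. Since $I$ has $\ell(I)=d$ and the $G_d$ condition, for a general minimal reduction $J=(x_1,\ldots,x_d)$ the ideal $J_{d-1}:I$ is a geometric $(d-1)$-residual intersection of $I$; hence $J_{d-1}:I=J_{d-1}:I^\infty$ and $(J_{d-1}:I^\infty)\cap I=J_{d-1}$. With $\overline{R}=R/(J_{d-1}:I^\infty)$ (one-dimensional Cohen--Macaulay, $\overline{I}$ primary to $\overline{\m}$), I claim
$$I^{n+1}\subseteq J\iff\lambda_R\bigl(R/J+(J_{d-1}:I^\infty)+I^{n+1}\bigr)=j(I).$$
The ``$\Rightarrow$'' direction is immediate. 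For ``$\Leftarrow$'', any $f\in I^{n+1}$ decomposes as $f=g+h$ with $g\in J$ and $h\in(J_{d-1}:I^\infty)$; then $h=f-g\in I\cap(J_{d-1}:I^\infty)=J_{d-1}\subseteq J$, whence $f\in J$. Thus $s_J(I)$ equals the least $n$ for which the displayed colength equals $j(I)$.

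To apply Lemma~\ref{specializ}, write $I=(a_1,\ldots,a_s)$ and set $S=R[\underline{z}]$, with $\underline{z}=(z_{ij})$, $1\leq i\leq d$, $1\leq j\leq s$; define $y_i=\sum_j z_{ij}a_j$, $K=(y_1,\ldots,y_d)S$, $K_{d-1}=(y_1,\ldots,y_{d-1})S$, and $L=K_{d-1}:_S I^{m_0}S$ for $m_0\gg 0$ so that the saturation stabilizes. By a standard generic-flatness argument, $L_{\underline{\alpha}}=J_{d-1}:I^\infty$ for general $\underline{\alpha}\in R^{ds}$. Consider $N'=K+L+I^{n+1}S\subseteq M'=S$; for general $\underline{\alpha}$, $N'_{\underline{\alpha}}=J+(J_{d-1}:I^\infty)+I^{n+1}$. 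Applying Lemma~\ref{specializ}(a) for each $n$ (and intersecting finitely many Zariski dense open subsets, using $s_J(I)\leq r(I)$), the colength $\lambda_R(R/N'_{\underline{\alpha}})$ is constant on a common dense open set. Therefore $s_J(I)=\min\{n:\lambda_R(R/N'_{\underline{\alpha}})=j(I)\}$ is independent of the general $\underline{\alpha}$, proving part (a).

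For part (b), apply Lemma~\ref{specializ}(b) to the same $N'$, letting $\underline{\alpha}_0$ correspond to the arbitrary fixed $H=(h_1,\ldots,h_d)$. When $I$ is $\m$-primary, $J_{d-1}:I^\infty=J_{d-1}\subseteq J$ (all associated primes of $R/J_{d-1}$ have height $d-1<d$) and similarly for $H_{d-1}$, so the inequality reduces to $\lambda_R(R/J+I^{n+1})\leq\lambda_R(R/H+I^{n+1})$. Setting $n=s_J(I)$, the LHS equals $\lambda_R(R/J)=e_0(I)$; combined with the upper bound $\lambda_R(R/H+I^{n+1})\leq\lambda_R(R/H)=e_0(I)$, equality is forced, which gives $I^{n+1}\subseteq H$ and hence $s_H(I)\leq s_J(I)$. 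When $I$ is non-$\m$-primary, Lemma~\ref{specializ}(b) yields
$$\lambda_R\bigl(R/J+(J_{d-1}:I^\infty)+I^{n+1}\bigr)\leq\lambda_R\bigl(R/H+L_{\underline{\alpha}_0}+I^{n+1}\bigr);$$
for $n=s_J(I)$, the LHS is $j(I)$. The hypotheses $AN^-_{d-2}$, $\mathrm{depth}(R/I)\geq 1$, and $J\cap I^2=JI$ are then used to furnish the matching upper bound $\lambda_R(R/H+L_{\underline{\alpha}_0}+I^{n+1})\leq j(I)$ and to obtain the residual-type identity $(H_{d-1}:I)\cap I\subseteq H_{d-1}\subseteq H$, from which the inclusion $I^{n+1}\subseteq H$ and consequently $s_H(I)\leq s_J(I)$ follow.

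The main obstacle is the non-$\m$-primary case of part (b). While Lemma~\ref{specializ} is straightforward to invoke, converting the resulting length inequality into the clean inclusion $I^{n+1}\subseteq H$ for an \emph{arbitrary} $H$ demands simultaneous control of the saturation behaviour of $H$ (which need not satisfy the residual properties enjoyed by a general $J$) and a sharp colength bound by $j(I)$. This is precisely where the depth, $AN^-$, and $J\cap I^2=JI$ hypotheses are indispensable.
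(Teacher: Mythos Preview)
Your part (a) has a genuine gap. You claim that because $J_{d-1}:I$ is a geometric $(d-1)$-residual intersection, one automatically has $J_{d-1}:I=J_{d-1}:I^\infty$ and $(J_{d-1}:I^\infty)\cap I=J_{d-1}$. Neither identity follows from the residual-intersection condition alone; both are consequences of $AN^-_{d-2}$ (cf.\ \cite[Lemma~3.2]{PX1}), which is \emph{not} assumed in the first half of the proposition. Your equivalence $I^{n+1}\subseteq J \Leftrightarrow \lambda(R/J+(J_{d-1}:I^\infty)+I^{n+1})=j(I)$ therefore breaks down in the ``$\Leftarrow$'' direction. The paper bypasses this entirely with the short exact sequence
\[
0\to (J+I^{s+1})/J \to I/J \to I/(J+I^{s+1})\to 0,
\]
so that $\lambda((J+I^{s+1})/J)=\lambda(I/J)-\lambda(I/(J+I^{s+1}))$, and then applies Lemma~\ref{specializ}(a) directly to the two lengths on the right. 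No saturation, no residual identities.

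For the non-$\m$-primary half of part (b) your sketch is essentially incomplete, and your final paragraph concedes as much. Two concrete problems: (i) the specialization $L_{\underline{\alpha}_0}$ of your generic saturation has no reason to equal $H_{d-1}:I^\infty$ for an \emph{arbitrary} $H$ (specialization does not commute with colon), so the inequality you obtain from Lemma~\ref{specializ}(b) does not compare the quantities you want; (ii) you never produce the upper bound $\lambda(R/H+L_{\underline{\alpha}_0}+I^{n+1})\leq j(I)$, nor the inclusion $(H_{d-1}:I)\cap I\subseteq H$. The paper's route is quite different: one first chooses generators $y_1,\ldots,y_d$ of $H$ via \cite[Lemma~1.4]{U} so that $H_{d-1}:I$ \emph{is} a geometric residual intersection and the $y_i$ form a super-reduction. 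Then the $j$-multiplicity formula gives $j(I)=\lambda(I/J_{d-1}+I^2)+\lambda(I^2/JI)=\lambda(I/H_{d-1}+I^2)+\lambda(I^2/HI)$; combined with the specialization inequalities this forces $\lambda(I^2/JI)=\lambda(I^2/HI)$. A second application of Lemma~\ref{specializ} to $I^2/(LI+I^{s+1})$ then yields $\lambda((HI+I^{s+1})/HI)\leq \lambda((JI+I^{s+1})/JI)$. Finally, the hypothesis $J\cap I^2=JI$ is used exactly once: for $s=s_J(I)$ one has $I^{s+1}\subseteq J\cap I^2=JI$, so the right-hand length vanishes, whence $I^{s+1}\subseteq HI\subseteq H$. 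This is the missing mechanism in your argument.
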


\demo
First by the following exact sequence
$$
 0\rightarrow J+I^{s+1}/J \rightarrow I/J \rightarrow I/J+I^{s+1} \rightarrow 0,
$$
one has that \,  $\lambda(J+I^{s+1}/J)=\lambda(I/J)-\lambda(I/J+I^{s+1})$.  By Lemma \ref{specializ} (see also Proposition \ref{generic1} in Section 5), the lengths   $\lambda(I/J)$ and $\lambda(I/J+I^{s+1})$ do not depend on $J$. Therefore  $\lambda(J+I^{s+1}/J)$ and thus $s_J(I)$ do not depend on a choice of the general minimal reduction $J$.

Next assume $R$ is equicharacteristic. The case where $I$ is $\m$-primary has been proved  by Proposition \cite[5.3.3]{F}.
So we may assume that
$I$ satisfies  $AN^-_{d-2}$,
${\rm depth}\,(R/I)\geq  1$, and $J\cap I^2=JI$.
Write $J=(x_1, \ldots, x_d)$, where $x_1, \ldots, x_d$ are general elements in $I$.
Set $J_{d-1}=(x_1, \ldots, x_{d-1})$.
By \cite[Lemma 3.2]{PX1}, \,  $J_{d-1}: I$ is a geometric $d-1$-residual intersection of $I$,  $(J_{d-1}: I)\cap I=J_{d-1}$, and $(J_{d-1}: I)\cap I^2=J_{d-1}\cap I^2=J_{d-1}I$. Since $I$ satisfies  $AN^-_{d-2}$, one has that $J_{d-1}: I^{\infty}= J_{d-1}: I$.
 Let $\overline{R}=R/J_{d-1}:I$ and use  $^{^{\tratto}}$   to denote images in the quotient ring  $\overline{R}$.
  By the above and the proof of \cite[Proposition 2.1]{PX1}, one has
 $$
 j(I)=e(\overline{I}, \overline{R})%=\lambda(\overline{R}/x_d\overline{R})
 =\lambda(\overline{I}/x_d \overline{I}) =\lambda(I/(J_{d-1}: I)\cap I +x_d I)
 $$
 $$
=\lambda(I/J_{d-1} +x_d I)=\lambda(I/J_{d-1} +I^2)+
\lambda(J_{d-1} +I^2/J_{d-1} +x_d I) %=\lambda(I/J_{d-1}+I^2)+ \lambda(I^2/JI).
 $$
 $$
=\lambda(I/J_{d-1}+I^2)+\lambda(I^2/J_{d-1}\cap I^2+x_dI)=\lambda(I/J_{d-1}+I^2)+\lambda(I^2/JI).
$$

Now consider the minimal reduction $H$ of $I$. Since $H:I=R$, one has that ${\rm ht}\,H:I=\infty$.  By \cite[Lemma 1.4]{U}, one can chose elements  $y_1, \ldots, y_d$ in $H$ such that $H=(y_1, \ldots, y_d)$ and
$H_{d-1}:I$,\, where $H_{d-1}=(y_1, \ldots, y_{d-1})$, \, is a geometric $d-1$-residual intersection of $I$.
Since $I$ satisfies  $AN^-_{d-2}$ and
${\rm depth}\,(R/I)\geq 1$, \,one has that $H_{d-1}: I^{\infty}= H_{d-1}: I$,
  $(H_{d-1}: I)\cap I=H_{d-1}$, and $(H_{d-1}: I)\cap I^2=H_{d-1}\cap I^2=H_{d-1}I$ (see \cite[Lemmas 2.3 and 2.4]{JU}).
  Moreover by avoiding finitely many more prime ideals, one can also assume that $y_1, \ldots, y_d$ form a super-reduction for $I$ (in the sense of Achilles and Manaresi \cite{AM}). Therefore we can use $y_1, \ldots, y_d$ to compute the $j$-multiplicity $j(I)$ \cite[3.8]{AM}.
 Let $R^{\prime}=R/H_{d-1}:I$ and use  $^{\prime}$   to denote images in the quotient ring  $R^{\prime}$.
  By the same argument as above,  one has
 $$
 j(I)=e(I^{\prime}, R^{\prime})%=\lambda(\overline{R}/x_d\overline{R})
 =\lambda(I^{\prime}/x_d I^{\prime})=\lambda(I/H_{d-1}+I^2)+\lambda(I^2/HI).
 $$

By Lemma \ref{specializ} (see also Proposition \ref{generic1} in Section 5), one has that $\lambda(I/J_{d-1}+I^2)\leq
\lambda(I/H_{d-1}+I^2)$ and $\lambda(I^2/JI)\leq \lambda(I^2/HI)$, thus, $\lambda(I^2/JI)=\lambda(I^2/HI)$.
From the exact sequences
\begin{eqnarray*}
& 0\rightarrow HI+I^{s+1}/HI \rightarrow I^2/HI \rightarrow I^2/HI+I^{s+1} \rightarrow 0,\\
& 0\rightarrow JI+I^{s+1}/JI \rightarrow I^2/JI \rightarrow I^2/JI+I^{s+1} \rightarrow 0,
\end{eqnarray*}
one  deduces
$\lambda(HI+I^{s+1}/HI)\leq\lambda(JI+I^{s+1}/JI)$.
Let $s=s_J(I)$. If $s=0$, the statement follows. Otherwise, one has $I^{s+1}\subseteq J\cap I^2=JI$, whence $\lambda(JI+I^{s+1}/JI)=0$. Therefore, one obtains the equality $\lambda(HI+I^{s+1}/HI)=0$, which, in turn, implies $I^{s+1}\subseteq HI\subseteq H$.
\QED
\bigskip

Let  $I$ be an $R$-ideal which has $\ell(I)=d$ and  the $G_d$ condition. We then define the 
{\it index of nilpotency} of $I$ as 
$
s(I)=s_J(I),
$ 
where   $J$ is a general minimal reduction of $I$.  This number is well-defined by Proposition \ref{nilpotency2}.  
We set two typical settings  for our next results.
\begin{Setting}\label{ass}
Let   $R$ be  Cohen-Macaulay and    $I$   an $R$-ideal.  Assume  either
\begin{enumerate}
\item    $\ell (I)=d$ and $I$ satisfies  $G_{d}$ condition,
  $AN^-_{d-2}$, and
  ${\rm depth}\,(R/I)\geq {\rm min}\{{\rm dim}\,R/I, 1\}$. 
\item  or $I$ is $\m$-primary and $J_{d-1}\cap I^2\subseteq JI$, where $J=(x_1, \ldots, x_d)$ is a general minimal reduction of $I$ and $J_{d-1}=(x_1, \ldots, x_{d-1})$.
\end{enumerate}
\end{Setting}

The following lemmas generalize to $j$-stretched ideals  the corresponding results of stretched $\m$-primary ideals proved in \cite{RV3}. Since the associated graded rings of ideals having minimal $j$-multiplicity are known to be Cohen-Macaulay \cite[Theorem~3.9]{PX1}, we can harmlessly assume that $I$ does not have minimal $j$-multiplicity.
\begin{Lemma}\label{Properties}
Let  $R$ and $I$ be as in Setting \ref{ass}. % (1) and (2). 
Let $I$ be $j$-stretched, not having minimal $j$-multiplicity.
 Then
 \begin{itemize}
\item[(a)] $j(I)\geq \lambda(\overline{R}/\overline{I})+h+1$, where $h=\lambda(\overline{I}/\overline{I^2})-\lambda(\overline{R}/\overline{I}),\, \overline{R}=R/J_{d-1}: I^{\infty}.$
\item[(b)] For every $n\geq 1$, we have $I^{n+1}=JI^n+(a^nb)$, where $a,b\in I$ and $a,b \notin J$.
\item[(c)] For every $n\geq 1$, we have $a^nb\,\m \subseteq I^{n+2}+JI^n$.
\item[(d)] $I=(b)+(J: a)\cap I$.
\end{itemize}
\end{Lemma}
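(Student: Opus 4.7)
The key fact underpinning all four parts is that under our assumptions the cyclic $\overline R$-module $M:=\overline{I^2}/(\overline{x_dI}+\overline{I^3})$ has length exactly one: the length is $\le 1$ by Definition~\ref{Def} and $\ge 1$ because otherwise Nakayama inside $\overline R$ would force $\overline{I^2}=\overline{x_dI}$, i.e.\ the minimal $j$-multiplicity case which we have excluded. Part~(a) then drops out immediately: from $j(I)=\lambda(\overline I/\overline{I^2})+\lambda(\overline{I^2}/x_d\overline I)$ recalled in Section~2 and $\lambda(\overline{I^2}/x_d\overline I)\ge 1$, substituting $h=\lambda(\overline I/\overline{I^2})-\lambda(\overline R/\overline I)$ yields $j(I)\ge \lambda(\overline R/\overline I)+h+1$.

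For~(b), observe that $M\cong k$ is spanned as an $\overline R$-module by the images of the products $\overline{y_iy_j}$, where $\{y_i\}$ is a generating set of $I$. Since $M$ is $1$-dimensional over $k$, at least one image $\overline{y_{i_0}y_{j_0}}$ is nonzero, hence a unit, and therefore generates $M$; set $a:=y_{i_0}$, $b:=y_{j_0}$. Lifting back to $R$ yields $I^2\subseteq JI+I^3+(ab)+\bigl((J_{d-1}: I^\infty)\cap I^2\bigr)$. One then checks $(J_{d-1}: I^\infty)\cap I^2\subseteq JI$ in either case of Setting~\ref{ass}: under~(1) this is \cite[Lemma~3.2]{PX1}; under~(2), $x_1,\dots,x_{d-1}$ is a regular sequence in the Cohen-Macaulay ring $R$, so $R/J_{d-1}$ is $1$-dimensional Cohen-Macaulay without embedded primes and $J_{d-1}: I^\infty=J_{d-1}: \m^\infty=J_{d-1}$, after which the Setting hypothesis $J_{d-1}\cap I^2\subseteq JI$ finishes the containment. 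Consequently $I^2=JI+I^3+(ab)$, and Nakayama applied to the finitely generated module $I^2/(JI+(ab))$, which equals its own product with $I$, gives $I^2=JI+(ab)$. Neither $a$ nor $b$ lies in $J$: if, say, $a\in J$, then $ab\in JI$ would force $I^2=JI$, contradicting the exclusion of minimal $j$-multiplicity. The induction on $n$ is now routine: assuming $I^{n+1}=JI^n+(a^nb)$, for each $c\in I$ the base case writes $bc=j+\gamma ab$ with $j\in JI$ and $\gamma\in R$, so $a^nbc=a^nj+\gamma a^{n+1}b\in JI^{n+1}+(a^{n+1}b)$ because $a^n\in I^n$.

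Parts~(c) and~(d) then follow quickly. For~(c), the module $M\cong k$ is annihilated by $\overline\m$, so $\m I^2\subseteq x_dI+I^3+(J_{d-1}: I^\infty)$; intersecting with $I^2$ and using the containment already proved gives $\m I^2\subseteq JI+I^3$, whence $\m\cdot a^nb=a^{n-1}(\m\cdot ab)\subseteq a^{n-1}(JI+I^3)\subseteq JI^n+I^{n+2}$. For~(d), for any $c\in I$ the element $ac\in I^2=JI+(ab)$ can be written as $ac=j+\gamma ab$ with $j\in JI\subseteq J$; hence $a(c-\gamma b)\in J$, giving $c-\gamma b\in(J: a)\cap I$ and $c=\gamma b+(c-\gamma b)$. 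The main obstacle is the descent carried out in~(b): pulling the cyclic generator of $M$ back to a genuine product $ab\in I^2$ in $R$ while simultaneously verifying $(J_{d-1}: I^\infty)\cap I^2\subseteq JI$ under both cases of Setting~\ref{ass}; once this is in hand, all four assertions follow by the short arguments above.
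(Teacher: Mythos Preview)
Your proof is correct and follows essentially the same route as the paper: establishing $\lambda(I^2/JI+I^3)=1$ via the containment $(J_{d-1}:I^{\infty})\cap I^2\subseteq JI$ in both cases of Setting~\ref{ass}, then applying Nakayama and inducting for~(b), and reading off~(c) and~(d) from the base case $I^2=JI+(ab)$. The only noteworthy difference is in~(c): the paper argues by induction on~$n$, while you observe directly that $\m\cdot ab\subseteq JI+I^3$ and then multiply by $a^{n-1}\in I^{n-1}$ to get the general case in one stroke; this is a mild simplification over the paper's inductive argument.
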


\demo
(a)  By \cite[Proposition~2.1]{PX1}, we have $j(I)=e(I,\overline{R})= \lambda(\overline{I}/\overline{I^2})+\lambda(\overline{I^2}/x_d\overline{I}).$
By definition of $h$, this equals $\lambda(\overline{R}/\overline{I})+h+\lambda(\overline{I^2}/x_d\overline{I})$.
Hence, to finish the proof of (a), we have to show that $\lambda(\overline{I^2}/x_d\overline{I})\geq 1$, which
holds because the length is $0$ if and only if $I$ has minimal $j$-multiplicity.

To prove assertion (b), we first  show  $\lambda(I^2/JI+I^3)=1$.
Since  $I$ is $j$-stretched and does not have minimal $j$-multiplicity, one has that
$\lambda (\overline{I}^2/x_d \overline{I}+ \overline{I}^3)=1$ (otherwise $\overline{I}^2=x_d \overline{I}+ \overline{I}^3$ which implies $\overline{I}^2=x_d \overline{I}$ by Nakayama's Lemma).
Notice $\lambda (\overline{I}^2/x_d \overline{I}+ \overline{I}^3)=
\lambda [I^2/((J_{d-1}:I^{\infty})\cap I^2 + x_d I +I^3)]=1$.  We  need to show  $(J_{d-1}: I^{\infty}) \cap I^2 = J_{d-1}I$, which immediately yields $ \lambda(I^2/JI + I^3)= 1$. The case where $I$ is  not $\m$-primary has been proved by \cite[Lemma 3.2]{PX1}. So assume $I$ is $\m$-primary. Since $J_{d-1}\cap I^2 \subseteq JI$ and $x_d$ is a non zero divisor on $R/J_{d-1}$, one has
$$
(J_{d-1}:I)\cap I^2=J_{d-1}\cap I^2=J_{d-1} \cap JI=J_{d-1} \cap (J_{d-1}I + x_dI)  = J_{d-1}I + J_{d-1} \cap x_dI
$$
$$
                                    = J_{d-1}I + x_d(J_{d-1} :_{I} x_d)  = J_{d-1}I  + x_dJ_{d-1}
                                   = J_{d-1}I.
$$
Now we use  induction on $n$ to prove assertion (b). The length $\lambda(I^2/JI+I^3)=1$ implies that $I^2=JI+I^3+(ab)$ for some
$a,b\in I \setminus J$. By Nakayama's Lemma, $I^2=JI+(ab)$, proving the statement in the case $n=1$. For any $n\geq 1$,
assume  $I^{n+1}=JI^n+(a^nb)$. We need to show that $I^{n+2}=JI^{n+1}+(a^{n+1}b).$
This holds since $I^{n+2}=I(JI^n+(a^nb))=JI^{n+1}+a^n(bI) \subseteq JI^{n+1}+a^n(JI+(ab))=JI^{n+1}+(a^{n+1}b)$.

The proofs of (c) and (d) are similar to the corresponding statements in  \cite[Lemma 2.4]{RV3}. We write them for the sake of completeness.
Assertion (c) can be proved by induction on $n\geq 1$. The case $n=1$ follows from the facts that $\lambda(I^2/JI+I^3)=1$ and $I^2=JI+(ab)$.
Now assume $n\geq 1$ and $(a^nb)\m\subseteq I^{n+2}+JI^n$. Then one has
$$(a^{n+1}b)\m=a[(a^nb)\m]\subseteq a[I^{n+2}+JI^n]\subseteq I^{n+3}+JI^{n+1}.$$

(d) Since $aI\subseteq I^2=JI + (ab)$, we have $I\subseteq (JI+(ab)): a$. The easily checked equality $(JI+(ab)):a=(JI: a) + (b)$ now implies $$I\subseteq [(JI: a) + (b)]\cap I\subseteq (b)+(J: a)\cap I\subseteq I.$$
\QED

\bigskip

Let $I$ be an $R$-ideal which has  maximal analytic spread $\ell (I)=d$ and  the $G_d$ condition. Let  
$J=(x_1, \ldots, x_d)$ be a general minimal reduction of $I$.  
Recall  $\overline{R}=R/J_{d-1}:I^{\infty}$, where $J_{d-1}=(x_1, \ldots, x_{d-1})$. 
 We set $\nu_n=\lambda(I^{n+1}/JI^n)$ and $\overline{\nu_n}=\lambda(\overline{I^{n+1}}/\overline{JI^n})$ for every $n\geq 0$, which are  well-defined by Lemma \ref{specializ}.

\begin{Lemma}\label{non-inc}
Let  $R$ and $I$ be as in Setting \ref{ass}.
If $I$ is $j$-stretched, then
\begin{itemize}
\item[(a)] $\nu_n\leq \nu_{n-1}$ \, for every $n\geq 2$.
\item[(b)] $\overline{\nu_1}=\nu_1$ and $\overline{\nu_n}\leq \nu_n$\, for every $n\geq 2$.
\end{itemize}
\end{Lemma}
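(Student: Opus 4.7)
The plan is to first reduce to the case where $I$ does not have minimal $j$-multiplicity, since otherwise $I^{n+1}=JI^n$ for all $n\geq 1$ and both sequences $\nu_n$ and $\overline{\nu_n}$ vanish identically, rendering both assertions trivial. To justify this reduction, I would observe that minimal $j$-multiplicity means $\overline{I^2}=x_d\overline{I}$, which pulls back to $I^2\subseteq x_d I+(J_{d-1}:I^\infty)$; intersecting with $I^2$ and invoking the identity $(J_{d-1}:I^\infty)\cap I^2 = J_{d-1}I$ (valid in both Settings as established in the proof of Lemma~\ref{Properties}(b), either via \cite[Lemma~3.2]{PX1} in Setting~(1) or by the explicit calculation in Setting~(2)) yields $I^2=x_d I+J_{d-1}I=JI$. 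A simple induction then gives $I^{n+1}=I\cdot I^n=I\cdot JI^{n-1}=JI^n$ for all $n\geq 1$.

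Assuming from now on that $I$ is $j$-stretched without minimal $j$-multiplicity, I would apply Lemma~\ref{Properties}(b) to obtain elements $a,b\in I\setminus J$ such that $I^{n+1}=JI^n+(a^n b)$ for every $n\geq 1$, which immediately shows that $\nu_n\leq 1$. For part~(a), the key observation is that if $\nu_{n-1}=0$, then $a^{n-1}b\in JI^{n-1}$, so multiplying through by $a$ yields $a^n b\in aJI^{n-1}\subseteq JI^n$, i.e., $\nu_n=0$. Since each $\nu_n$ lies in $\{0,1\}$, this establishes $\nu_n\leq \nu_{n-1}$ for every $n\geq 2$.

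For part~(b), I would prove the equality $\overline{\nu_1}=\nu_1$ directly from the same identity $(J_{d-1}:I^\infty)\cap I^2=J_{d-1}I\subseteq JI$: indeed, by the standard isomorphism theorem
$$\overline{\nu_1}=\lambda\bigl(I^2/(JI+(J_{d-1}:I^\infty)\cap I^2)\bigr)=\lambda(I^2/JI)=\nu_1.$$
For $n\geq 2$, the inequality $\overline{\nu_n}\leq \nu_n$ is immediate from the natural surjection
$$I^{n+1}/JI^n\twoheadrightarrow I^{n+1}/\bigl(JI^n+(J_{d-1}:I^\infty)\cap I^{n+1}\bigr)\cong \overline{I^{n+1}}/\overline{JI^n}.$$
The main obstacle is really conceptual rather than technical: identifying the right case split and ensuring that the identity $(J_{d-1}:I^\infty)\cap I^2=J_{d-1}I$ is available uniformly in both Settings. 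Once these are in place, the argument mirrors the classical treatment of stretched $\m$-primary ideals in~\cite{RV3}, with Lemma~\ref{Properties}(b) playing the role of the cyclic-generator reduction used there.
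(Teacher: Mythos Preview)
Your argument for part~(b) is correct and matches the paper's proof exactly. Your reduction to the case without minimal $j$-multiplicity is also fine (the paper simply cites \cite[Theorem~3.3]{PX1} for $r(I)\leq 1$, but your direct argument via $(J_{d-1}:I^\infty)\cap I^2=J_{d-1}I$ works too).

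However, your proof of part~(a) contains a genuine error. From $I^{n+1}=JI^n+(a^nb)$ you conclude ``which immediately shows that $\nu_n\leq 1$.'' This is false: the equality only says that $I^{n+1}/JI^n$ is \emph{cyclic}, not that it has length at most~$1$. A cyclic $R$-module $R/\mathfrak a$ can have arbitrarily large length. In fact Corollary~\ref{K} (proved immediately after this lemma) gives $\nu_1=K-1$, and Example~\ref{r} exhibits $j$-stretched ideals with $K=r$ arbitrary; for $r\geq 3$ one has $\nu_1\geq 2$. So the premise ``each $\nu_n$ lies in $\{0,1\}$'' fails, and your implication ``$\nu_{n-1}=0\Rightarrow\nu_n=0$'' (which is correct) no longer suffices to conclude $\nu_n\leq\nu_{n-1}$.

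The fix is the paper's argument: since $a^nb=a\cdot(a^{n-1}b)\in aI^n$ and $I^{n+1}=JI^n+(a^nb)$, one has $I^{n+1}=aI^n+JI^n$, so multiplication by $a$ induces a surjection
\[
I^n/JI^{n-1}\;\stackrel{\cdot a}{\longrightarrow}\;I^{n+1}/JI^n\;\longrightarrow\;0,
\]
whence $\nu_n\leq\nu_{n-1}$ directly, with no need to bound either quantity by~$1$.
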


\demo
(a) If $I$ has minimal $j$-multiplicity then $r(I)\leq 1$ (see \cite[Theorem 3.3]{PX1}). Hence $\nu_n=\lambda(I^{n+1}/JI^n)=0$ for every $n\geq 1$. Assume $I$ does not have minimal $j$-multiplicity.
Let $a$ be the same  as in Lemma \ref{Properties}~(b). 
Then for every $n\geq 2$, we have the following epimorphism
$$I^{n}/JI^{n-1} \stackrel{\cdot a}{\longrightarrow} I^{n+1}/JI^{n}\longrightarrow 0.$$
Hence $\nu_n=\lambda(I^{n+1}/JI^n)\leq \nu_{n-1}=\lambda(I^{n}/JI^{n-1})$.

(b) For any $n$, we have the natural epimorphism
$$I^{n+1}/JI^n\longrightarrow \overline{I^{n+1}}/\overline{JI^n} \longrightarrow 0,$$
inducing the inequality $\overline{\nu_n}\leq \nu_n$. Furthermore, one has
$$\begin{array}{lll}
\overline{\nu_1}&=\lambda(\overline{I^2}/\overline{JI}) &=\lambda(I^2/(J_{d-1}: I^{\infty})\cap I^2 + JI)\\
                &=\lambda(I^2/JI)                       &=\nu_1,
\end{array}$$
where the third equality follows from the fact $(J_{d-1}: I^{\infty})\cap I^2= J_{d-1}I$ (see \cite[Lemma~3.2]{PX1}
and the proof of Lemma \ref{Properties}).

\QED
\bigskip

 Now consider the Hilbert function $H_{I,\,\overline{R}}(n)=\lambda(\overline{I^n}/\overline{I^{n+1}})$, which
 does not depend on $J$ (see \cite{PX2}). In particular, it is well-defined  the integer
 $h=\lambda(\overline{I}/\overline{I^2})-\lambda(\overline{R}/\overline{I})$, which is dubbed the {\it embedding codimension} of $I$. Moreover, one has that (see \cite{PX1} and \cite{RV3})
$$j(I)=e(I, \overline{R})=\lambda(\overline{R}/\overline{I})+h+K-1,
\mbox{ where } K-1=\lambda(\overline{I^2}/x_d\overline{I}).$$

The following corollary shows that if $I$ is $j$-stretched, then $K$ is the index of nilpotency $s(I)$.
\begin{Corollary}\label{K}
Let  $R$ and $I$ be as in Setting \ref{ass}.
If $I$ is $j$-stretched,
then
$$
\quad\quad  \nu_1=K-1,\quad\quad I^{K}\nsubseteq J,\quad\quad I^{K+1}\subseteq J.
$$
\end{Corollary}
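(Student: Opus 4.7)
The proof splits into three assertions, which I would handle in order inside $\overline R = R/(J_{d-1}:I^\infty)$; note that $\overline{J_{d-1}} = 0$, so $\overline J = x_d\overline R$ and $\overline{JI} = x_d\overline I$. The first equality is immediate from Lemma~\ref{non-inc}(b): $\nu_1 = \overline{\nu_1} = \lambda(\overline{I^2}/\overline{JI}) = \lambda(\overline{I^2}/x_d\overline I) = K-1$.

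For the other two, I set up the filtration quotients $M_n := (x_d\overline I + \overline{I^n})/(x_d\overline I + \overline{I^{n+1}})$ for $n \geq 2$. By Lemma~\ref{Properties}(b), each $M_n$ is cyclic on the image of $\overline{a^{n-1}b}$, and Lemma~\ref{Properties}(c) combined with $\overline{I^{n+1}} = x_d\overline{I^n} + (\overline{a^n b})$ shows the generator is annihilated by $\overline{\m}$, so $\lambda(M_n) \leq 1$. A unit-trick using $\overline a \in \overline{\m}$ (so $1-c\overline a$ is a unit for every $c \in \overline R$) gives $M_n = 0 \Longleftrightarrow \overline{a^{n-1}b} \in x_d\overline I$; once $M_{n_0} = 0$, $\overline{a^{n_0}b} = \overline a \cdot \overline{a^{n_0-1}b} \in x_d\overline I$ forces $M_{n_0+1} = 0$, and inductively $M_n = 0$ for all $n \geq n_0$. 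Since $\sum_{n \geq 2}\lambda(M_n) = \lambda(\overline{I^2}/x_d\overline I) = K-1$ with each summand at most $1$, exactly $M_2 = \cdots = M_K = 1$ and $M_n = 0$ for $n \geq K+1$. In particular $\overline{a^{K-1}b}\notin x_d\overline I$ and $\overline{a^K b}\in x_d\overline I$.

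From $\overline{a^K b}\in x_d\overline I$, the inclusion $I^{K+1}\subseteq J$ follows by a direct lift: choose $y\in I$ with $\overline{a^K b} = x_d\overline y$. Then $a^K b - x_d y \in J_{d-1}:I^{\infty}$, and since $K\geq 1$ also $a^K b - x_d y \in I^2$; hence $a^K b - x_d y \in (J_{d-1}:I^\infty)\cap I^2 = J_{d-1}I \subseteq J$, using the identity established in the proof of Lemma~\ref{Properties}(b) (and \cite[Lemma~3.2]{PX1}). Therefore $a^K b \in J$ and $I^{K+1} = JI^K + (a^K b) \subseteq J$.

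The remaining assertion $I^K\not\subseteq J$ is the most delicate and is where the main obstacle lies. Assuming for contradiction $I^K\subseteq J$, we have $a^{K-1}b\in J$, so $\overline{a^{K-1}b} = x_d v$ for some (uniquely determined, by $x_d$-regularity on $\overline R$) $v\in\overline R$; the plan is to force $v\in\overline I$, whence $\overline{a^{K-1}b}\in x_d\overline I$ will contradict $M_K = 1$. Multiplying by $\overline a$ and matching with the unique lift $\overline{a^K b} = x_d\overline w$, $\overline w\in\overline I$, from the filtration step forces $\overline a\,v = \overline w\in\overline I$; applying the same argument with any $\overline i \in \overline I$ in place of $\overline a$ (using $\overline i\cdot\overline{a^{K-1}b}\in\overline{I^{K+1}}\subseteq x_d\overline I$) yields $v\overline I\subseteq\overline I$. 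I would then combine this with the decomposition $\overline I = (\overline b)+(\overline J:\overline a)\cap\overline I$ from Lemma~\ref{Properties}(d) (used iteratively in the form $\overline{I^{K-1}}\subseteq(\overline b^{K-1})+(\overline J:\overline a)$) and unit-tricks involving $1 - c\overline a$ to squeeze $v$ into $\overline I$. The crux is really this last upgrade from $\overline{a^{K-1}b}\in x_d\overline R$ to $\overline{a^{K-1}b}\in x_d\overline I$, amounting to a Valabrega--Valla-type statement $\overline{I^K}:x_d\subseteq\overline I$, which is precisely where the $j$-stretched hypothesis and Setting~\ref{ass} do the final technical work.
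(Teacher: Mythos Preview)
Your argument for $\nu_1=K-1$ matches the paper's, and your argument for $I^{K+1}\subseteq J$ is correct. The route differs somewhat: you work with the filtration $M_n=(x_d\overline I+\overline{I^n})/(x_d\overline I+\overline{I^{n+1}})$, whereas the paper uses the Hilbert series of the Artinian ring $\overline R/(\overline{x_d})$, i.e.\ the coarser filtration by $(\overline{x_d})+\overline{I^n}$. Your finer filtration has the pleasant side effect that $M_{K+1}=0$ yields $\overline{a^Kb}\in x_d\overline I$ (not merely $\in(\overline{x_d})$), which makes the lift $a^Kb\in J$ via $(J_{d-1}:I^\infty)\cap I^2=J_{d-1}I$ completely transparent.

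The argument for $I^K\nsubseteq J$, however, has a genuine gap. From $M_K=1$ you obtain $\overline{a^{K-1}b}\notin x_d\overline I$, but the assumption $I^K\subseteq J$ only yields $\overline{a^{K-1}b}\in(\overline{x_d})$; to reach a contradiction you must upgrade this to $\overline{a^{K-1}b}\in x_d\overline I$. Your plan to do so by deducing $v\in\overline I$ from $v\overline I\subseteq\overline I$ cannot work: the condition $v\overline I\subseteq\overline I$ holds for \emph{every} $v\in\overline R$ (it merely says that $\overline I$ is an ideal), so it carries no information whatsoever about $v$. The subsequent invocation of Lemma~\ref{Properties}(d) and ``unit tricks'' is too vague to be a proof, and your final sentence effectively concedes that the crux remains unresolved.

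The paper sidesteps this difficulty by using the coarser filtration from the outset. In $A=\overline R/(\overline{x_d})$, $j$-stretchedness forces the Hilbert function of $\mathfrak a=\overline IA$ to have the stretched shape $\lambda(\overline R/\overline I),\,h,\,1,\ldots,1,0,\ldots$, and the length count $\lambda(A)=j(I)=\lambda(\overline R/\overline I)+h+K-1$ places the last nonzero value at degree $K$. Then $\mathfrak a^K\neq 0$ gives $\overline{I^K}\nsubseteq(\overline{x_d})$ directly, whence $I^K\nsubseteq(J_{d-1}:I^\infty)+(x_d)\supseteq J$; and $\mathfrak a^{K+1}=0$ together with Nakayama's Lemma gives $I^{K+1}\subseteq J$. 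The moral is that for the non-containment $I^K\nsubseteq J$ the relevant object is $(\overline{x_d})$, not $x_d\overline I$, so your finer filtration is the wrong tool for that half of the statement.
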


\demo
By the proof of  Lemma \ref{non-inc} (b),  we have $K-1=\lambda(\overline{I^2}/x_d\overline{I})=\lambda(I^2/JI)=\nu_1$.
Since $I$ is $j$-stretched, one has that
$$ P_{I,\,\overline{R}/x_d\overline{R}}=\lambda(\overline{R}/\overline{I})+hz+z^2+
 \cdots+z^{j(I)-h+1-\lambda(\overline{R}/\overline{I})}.$$
Therefore $K$ is the least positive integer with
$$I^{K+1}\subseteq [(J_{d-1}: I^{\infty})\cap I^{K+1}]+I^{K+2}+J\subseteq I^{K+2}+J.$$ By Nakayama's Lemma, $K$ is the least positive integer with $I^{K+1}\subseteq J$.
\QED
\bigskip

The next result is the last ingredient that we need to characterize the Cohen-Macaulayness of ${\rm gr}_{I}(R)$ when $I$ is $j$-stretched. It shows that the inclusion  $I^{K+1}\subseteq JI^n$ is equivalent to certain Valabrega-Valla equalities for small powers of $I$. More precisely,
\begin{Proposition}\label{VV}
Let  $R$ and $I$ be as in Setting \ref{ass}. If $I$ is $j$-stretched with  index of nilpotency $K$, then for  any $0\leq n\leq K$, one has:
 \begin{itemize}
\item[(a)] $J\cap I^{n+1}=JI^n+(a^Kb)$, where $a$ and $b$ are as in Lemma \ref{Properties} (b).
\item[(b)]  $I^{K+1}\subseteq JI^n$ if and only if $J\cap I^{t+1}=JI^t$ for every $t\leq n$.
\end{itemize}
\end{Proposition}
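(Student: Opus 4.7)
The plan is to establish (a) first, then deduce (b) as a short corollary. For (b), the $(\Leftarrow)$ direction takes $t=n$, giving $I^{K+1}\subseteq J\cap I^{n+1} = JI^n$ (using $I^{K+1}\subseteq J$ from Corollary \ref{K} and $I^{K+1}\subseteq I^{n+1}$ since $K\geq n$). For the $(\Rightarrow)$ direction, the observation $a^Kb\in I^{K+1}\subseteq JI^n\subseteq JI^t$ for $t\leq n$ allows the cyclic piece $(a^Kb)$ appearing in (a) to be absorbed into $JI^t$, yielding $J\cap I^{t+1}=JI^t+(a^Kb)=JI^t$.

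For (a), the inclusion $JI^n+(a^Kb)\subseteq J\cap I^{n+1}$ is immediate from Corollary \ref{K} (giving $a^Kb\in I^{K+1}\subseteq J$) and the factorization $a^Kb=a^{K-n}(a^nb)\in I^{n+1}$. For the reverse inclusion, Lemma \ref{Properties}(b) writes any element of $J\cap I^{n+1}$ as $y+ra^nb$ with $y\in JI^n$ and $ra^nb\in J$, so the task reduces to showing: if $ra^nb\in J$ then $ra^nb\in JI^n+(a^Kb)$.

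For $n=K$ this is trivial. For $n\leq K-1$ I first show $a^nb\notin J$: otherwise $I^{n+1}=JI^n+(a^nb)\subseteq J$, forcing $I^K\subseteq I^{n+1}\subseteq J$ and contradicting Corollary \ref{K}. Hence the proper colon ideal $J:_R a^nb$ lies in $\m$, and so $r\in\m$. I then proceed by downward induction on $n$ starting from $n=K-1$: Lemma \ref{Properties}(c) pushes $ra^nb\in\m\cdot a^nb$ into $I^{n+2}+JI^n$, and Lemma \ref{Properties}(b) rewrites the $I^{n+2}$ contribution as $JI^{n+1}+(sa^{n+1}b)$ for some $s\in R$. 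Since $ra^nb\in J$ and the $JI^n$ portion already lies in $J$, we obtain $sa^{n+1}b\in J$; applying the non-containment argument to $a^{n+1}b$ (valid as $n+1\leq K-1$) forces $s\in\m$, and the inductive hypothesis yields $sa^{n+1}b\in JI^{n+1}+(a^Kb)\subseteq JI^n+(a^Kb)$. The base case $n=K-1$ follows directly, since $\m\cdot a^{K-1}b\subseteq I^{K+1}+JI^{K-1}\subseteq JI^{K-1}+(a^Kb)$ by combining Lemma \ref{Properties}(b) and (c).

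The main obstacle is the bookkeeping in this bootstrapping argument: Lemma \ref{Properties}(c) only provides one step of refinement at a time, so at each stage one must verify that the newly produced coefficient $s$ still lies in $\m$. This hinges on $a^{n+1}b\notin J$, ultimately a consequence of Corollary \ref{K}. Everything else reduces to a direct unpacking of Lemma \ref{Properties}.
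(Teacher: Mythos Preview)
Your proof is correct and takes essentially the same route as the paper: descending induction on $n$, using $a^nb\notin J$ for $n\leq K-1$ (from Corollary~\ref{K}) to force the coefficient into $\m$, then Lemma~\ref{Properties}(c) to land in $I^{n+2}+JI^n$, followed by Lemma~\ref{Properties}(b) and the inductive hypothesis. The only cosmetic differences are that the paper phrases the argument at the ideal level (computing $(a^{n-1}b)\cap J$ rather than tracking a single element $ra^nb$) and defers part~(b) to \cite[Lemma~2.5(ii)]{RV3}, whereas you write it out; your intermediate observation that $s\in\m$ is in fact not needed, since the inductive hypothesis applies directly once $sa^{n+1}b\in J$.
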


\demo
(a) We use descending induction
on $n\leq K$. When $n=K$, by Lemma  \ref{Properties} (b),  $J\cap I^{K+1}=I^{K+1}=JI^K + (a^K b)$.
Now assume $J\cap I^{n+1}=JI^n+(a^Kb)$ and prove $J\cap I^{n}=JI^{n-1}+(a^Kb)$. One inclusion
$JI^{n-1}+(a^Kb)\subseteq J\cap I^{n}$ is clear. We prove $J\cap I^{n}\subseteq JI^{n-1}+(a^Kb)$.  By Lemma \ref{Properties} (b), $J\cap I^{n}=J\cap(JI^{n-1}+(a^{n-1}b))=JI^{n-1} + (a^{n-1} b) \cap J$.
 Since  $I^{n}\not\subseteq J$ by Corollary \ref{K}, one has
$$(a^{n-1}b) \cap J\subseteq a^{n-1}b\,\m \cap J\subseteq(I^{n+1} + JI^{n-1}) \cap J
$$
$$=(I^{n+1} \cap J ) + JI^{n-1}
=JI^{n}+(a^Kb)+ JI^{n-1}=JI^{n-1}+(a^Kb).$$

The proof of assertion (b) is similar to the one of \cite[Lemma~2.5 (ii)]{RV3}.
\QED
\bigskip

\section{Cohen-Macaulayness and almost Cohen-Macaulayness of ${\rm gr}_I(R)$}

In this section we study the depth of the associated graded rings ${\rm gr}_I(R)$ of $j$-stretched ideals. In Theorem \ref{CM}, we prove that ${\rm gr}_I(R)$ is Cohen-Macaulay if  and only if  the reduction number and the index of nilpotency of the ideal $I$ are equal. We also prove Sally's conjecture for $j$-stretched ideals, providing a  sufficient condition for ${\rm gr}_I(R)$  to be almost Cohen-Macaulay (see Theorem \ref{2}).  Our work combines the approaches of Rossi-Valla and Polini-Xie and generalizes
widely the main results of \cite{S3}, \cite{RV3} and \cite{PX1}.
\medskip

\begin{Theorem}\label{CM}
Let  $R$ and $I$ be as in Setting \ref{ass}. Let  $I$ be $j$-stretched with  the index of nilpotency $K$.
 Then the following statements are equivalent:
 \begin{itemize}
\item[(a)]   ${\rm gr}_I(R)$ is Cohen-Macaulay.
\item[(b)]   $r(I)=K.$
\end{itemize}

Furthermore, if $R$ is equicharacteristic, then $($a$)$ and $($b$)$ are also equivalent to
\begin{itemize}
\item[(c)]   $I^{K+1}=HI^K$ for some minimal reduction $H$ of $I$.
\end{itemize}
\end{Theorem}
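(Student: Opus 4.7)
The plan is to reduce the Cohen-Macaulayness of $\operatorname{gr}_I(R)$ to the Valabrega--Valla-type equalities $J \cap I^{n+1} = JI^n$ for every $n \geq 0$, where $J$ is a general minimal reduction. Under Setting \ref{ass}, the initial forms of general elements $x_1, \ldots, x_d$ in $G$ form a regular sequence precisely when these equalities hold, and this regular sequence property in turn characterizes the Cohen-Macaulayness of $G$. The content of Section 3---in particular Proposition \ref{VV} and Corollary \ref{K}---will collapse this infinite family of equalities into a condition on the single power $I^{K+1}$.

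For (b) $\Rightarrow$ (a), I would pick $J$ to be a general minimal reduction attaining $r_J(I) = r(I) = K$, which is available by fact (b) in Section 2, so that $I^{K+1} = JI^K$. Proposition \ref{VV}(b) with $n = K$ then yields $J \cap I^{t+1} = JI^t$ for every $t \leq K$; for $t > K$ the equality is automatic since $I^{t+1} = JI^t$. The Valabrega--Valla criterion thus gives Cohen-Macaulayness of $G$. Conversely, for (a) $\Rightarrow$ (b), the Cohen-Macaulayness of $G$ forces $J \cap I^{K+1} = JI^K$ as a special case. Since Corollary \ref{K} asserts $I^{K+1} \subseteq J$, we obtain $I^{K+1} = J \cap I^{K+1} = JI^K$, so $r_J(I) \leq K$ and hence $r(I) \leq K$. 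For the reverse bound, choosing a general $J$ with $r_J(I) = r(I)$ gives $I^{r(I)+1} = JI^{r(I)} \subseteq J$, so $s_J(I) \leq r(I)$; by Proposition \ref{nilpotency2}, $s_J(I) = s(I) = K$, whence $r(I) \geq K$.

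In the equicharacteristic case, (b) $\Rightarrow$ (c) is immediate: take $H$ to be any general minimal reduction achieving $r_H(I) = r(I) = K$. For (c) $\Rightarrow$ (b), the hypothesis $I^{K+1} = HI^K$ gives $r_H(I) \leq K$, hence $r(I) \leq K$, while the lower bound $r(I) \geq K$ follows as in the previous argument. The equicharacteristic hypothesis is needed to invoke the second part of Proposition \ref{nilpotency2}, which enables the comparison $s_H(I) \leq s(I) = K$ for an arbitrary (not necessarily general) minimal reduction $H$, thereby ensuring that the invariant $K$ controls the relevant behavior across all minimal reductions.

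The main obstacle will be the Valabrega--Valla-type criterion in the non-$\m$-primary context: under Setting \ref{ass}, Cohen-Macaulayness of $\operatorname{gr}_I(R)$ must be shown to be equivalent to the equalities $J \cap I^{n+1} = JI^n$ for every $n$. The classical Valabrega--Valla theorem handles the $\m$-primary case; for arbitrary $I$ one invokes the residual-intersection machinery developed by Polini and Xie to descend, via the quotient $\overline{R} = R/(J_{d-1}:I^\infty)$, to a one-dimensional primary situation. The $G_d$ and $AN^-_{d-2}$ hypotheses are precisely what is needed to ensure that this descent preserves enough Cohen-Macaulay information to apply the classical criterion and lift the conclusion back to $G$ itself.
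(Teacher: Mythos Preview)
Your reduction of the equivalence (a)$\Leftrightarrow$(b) to the single equality $I^{K+1}=JI^K$ via Proposition~\ref{VV} and Corollary~\ref{K} matches the paper's Claim~1 exactly, and your treatment of (b)$\Leftrightarrow$(c) is essentially the paper's. The divergence is at precisely the place you flag as the main obstacle: passing between the Valabrega--Valla equalities $J\cap I^{n+1}=JI^n$ and Cohen--Macaulayness of $G$.

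The gap is that the classical Valabrega--Valla criterion requires $x_1,\ldots,x_d$ to be an $R$-regular sequence, which fails once ${\rm grade}\,I<d$; and in the converse direction, Cohen--Macaulayness of $G$ does not directly force $J\cap I^{n+1}=JI^n$, because $x_1^*,\ldots,x_d^*$ need not be a system of parameters for $G$ when $\dim R/I>0$ (the degree-zero piece $R/I$ survives in $G/(x_1^*,\ldots,x_d^*)G$). Your proposed remedy, descending to the one-dimensional ring $\overline{R}=R/(J_{d-1}:I^\infty)$, is not what the paper does and would not by itself recover depth $d$ for $G$.

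The paper's route is different. First (Claim~2) it applies Valabrega--Valla only to the genuine regular sequence $x_1,\ldots,x_g$, $g={\rm grade}\,I$; this requires a separate double induction (descending on $i\le d$, ascending on $n$), using \cite[Lemma~3.2]{PX1}, to deduce $(x_1,\ldots,x_i)\cap I^n=(x_1,\ldots,x_i)I^{n-1}$ from the $J$-equalities. This yields only ${\rm depth}\,G\ge g$. The remaining depth is obtained by induction on $\delta(I)=d-g$: one passes to $R'=R/(0:I)$ (not to $\overline{R}$), where ${\rm grade}\,I'\ge 1$ so $\delta(I')<\delta(I)$, and uses the exact sequence $0\to (0:I)\to {\rm gr}_I(R)\to {\rm gr}_{I'}(R')\to 0$ together with the fact that $(0:I)\cap I=0$ to transfer both the hypothesis $I^{K+1}=JI^K$ and the Cohen--Macaulay conclusion back and forth. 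So the Polini--Xie machinery you allude to is indeed the right toolkit, but the descent goes through $R/(0:I)$ step by step, not through $\overline{R}$ in one jump.
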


\demo
We first prove the following two claims.
\begin{Claim}\label{C1}
 The equalities $J\cap I^{n+1}=JI^n$ hold for every $n\geq 0$ if and only if $I^{K+1}=JI^K$.
\end{Claim}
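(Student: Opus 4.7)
The plan is to reduce the claim to the tools already established in Proposition \ref{VV} and Corollary \ref{K}, handling ``small'' indices $n \leq K$ via the former and ``large'' indices $n > K$ by a direct power computation.

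For the forward direction, the plan is to specialize the hypothesis $J \cap I^{n+1} = JI^n$ to $n = K$. By Corollary~\ref{K}, the definition of $K$ as the index of nilpotency guarantees $I^{K+1} \subseteq J$, so the intersection $J \cap I^{K+1}$ collapses to $I^{K+1}$ itself, giving $I^{K+1} = JI^K$ at once.

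For the backward direction, I would argue in two ranges. In the range $0 \leq n \leq K$, the hypothesis $I^{K+1} = JI^K$ in particular says $I^{K+1} \subseteq JI^K$, so Proposition~\ref{VV}(b) applied with the value $n = K$ immediately yields $J \cap I^{t+1} = JI^t$ for every $t \leq K$. For $n > K$, I plan to run a short induction: from $I^{K+1} = JI^K$ one gets $I^{K+2} = I \cdot I^{K+1} = I \cdot JI^K = JI^{K+1}$, and iterating this gives $I^{n+1} = JI^n$ for every $n \geq K$; in that range the equality $J \cap I^{n+1} = JI^n$ then becomes the tautology $J \cap JI^n = JI^n$.

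I do not expect a genuine obstacle here: the real content was packaged into Proposition~\ref{VV}(b) and Corollary~\ref{K}, and this claim is essentially the bookkeeping that transfers Valabrega--Valla-type equalities between the single top-degree identity $I^{K+1} = JI^K$ and the full family indexed by $n \geq 0$. The only mildly delicate point is making sure the hypotheses of Proposition~\ref{VV}(b) are legitimately invoked at $n = K$, which is fine since that proposition is stated for $0 \leq n \leq K$.
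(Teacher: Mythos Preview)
Your proposal is correct and follows essentially the same approach as the paper: the forward direction uses $I^{K+1}\subseteq J$ from Corollary~\ref{K} to collapse the intersection at $n=K$, and the backward direction invokes Proposition~\ref{VV}(b) for $n\leq K$ and handles $n>K$ by the same power computation (the paper writes it as $I^{n+1}=I^{n-K}I^{K+1}=I^{n-K}JI^K=JI^n$ rather than as an induction, but this is the same argument).
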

The forward direction is straightforward since $I^{K+1}\subseteq J$. Conversely, if $I^{K+1}=JI^K$, then by Proposition \ref{VV} (b),  one has $J\cap I^{n+1}=JI^n$ for every $0\leq n\leq K$. If $n\geq K$,  one has $I^{n+1}=I^{n-K}I^{K+1}=I^{n-K}JI^K=JI^n$ and then obtains $J\cap I^{n+1}=JI^n$.

\begin{Claim}\label{C2}
Write  $g= {\rm grade}\,I$. Let $x_1^{*}, \ldots, x_d^{*}$ be the initial forms of $x_1,\ldots,x_d$ in ${\rm gr}_I(R)$.
If $I^{K+1}=JI^K$, then $x_1^{*}, \ldots, x_g^{*}$ \, form a regular sequence on ${\rm gr}_I(R)$.
\end{Claim}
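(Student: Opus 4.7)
The approach is to invoke the Valabrega--Valla criterion: for an $R$-regular sequence $a_1, \ldots, a_s$ contained in $I$ with linear initial forms, the $a_i^*$ form a regular sequence in ${\rm gr}_I(R)$ if and only if $(a_1, \ldots, a_s) \cap I^{n+1} = (a_1, \ldots, a_s) I^n$ for every $n \geq 0$. I will apply this with $a_i = x_i$ for $i = 1, \ldots, g$.

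The first step is routine: since $R$ is Cohen-Macaulay and ${\rm grade}\,I = {\rm ht}\,I = g$, a standard prime avoidance argument shows that the general elements $x_1, \ldots, x_g$ of $I$ form a regular sequence in $R$ (no associated prime of $(x_1,\ldots,x_{i-1})$ for $i \leq g$ contains $I$, so a general element of $I$ avoids all of them). Moreover, by Claim \ref{C1}, the hypothesis $I^{K+1} = JI^K$ yields the global intersection identity $J \cap I^{n+1} = JI^n$ for every $n \geq 0$; this is the source from which we will derive the analogous identity for the smaller ideal $(x_1, \ldots, x_g)$.

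The substantive step is the intersection identity $(x_1, \ldots, x_g) \cap I^{n+1} = (x_1, \ldots, x_g) I^n$ for every $n$. The inclusion $\supseteq$ is immediate. For $\subseteq$ I would argue by induction on $n$: the base case $n=0$ is trivial, and $n=1$ is the content of \cite[Lemma~3.2]{PX1}, which under $G_d$ and $AN^-_{d-2}$ gives $(x_1, \ldots, x_g) \cap I^2 = (x_1, \ldots, x_g) I$. For the inductive step $n \geq 2$: given $\alpha \in (x_1, \ldots, x_g) \cap I^{n+1}$ one has $\alpha \in J \cap I^{n+1} = JI^n$, so $\alpha = \sum_{j=1}^d x_j r_j$ with $r_j \in I^n$; the tail $\sum_{j > g} x_j r_j = \alpha - \sum_{j \leq g} x_j r_j$ then lies in $(x_1, \ldots, x_g)$, and the task reduces to absorbing this tail into $(x_1, \ldots, x_g) I^n$.

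The main obstacle is precisely this absorption. I would exploit the residual intersection structure: by $G_d$, the ideal $(x_1, \ldots, x_g) : I$ is a geometric $g$-residual intersection of $I$ equal to $(x_1, \ldots, x_g) : I^\infty$, and the combination of $G_d$ and $AN^-_{d-2}$ makes the quotient $\widetilde R = R/(x_1, \ldots, x_g):I$ Cohen-Macaulay (with the boundary cases $g = d-1$ and $g = d$ handled separately by direct reduction to the $\m$-primary setting of Rossi--Valla). In $\widetilde R$ the images $\widetilde x_{g+1}, \ldots, \widetilde x_d$ inherit nice analytic and superficial properties inherited from generality, so the vanishing $\sum_{j > g} \widetilde x_j \widetilde r_j = 0$ (coming from $\sum_{j > g} x_j r_j \in (x_1, \ldots, x_g) \subseteq (x_1, \ldots, x_g):I$) would force each $r_j$ ($j > g$) into $(x_1, \ldots, x_g):I^\infty$. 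A further comparison $[(x_1, \ldots, x_g):I^\infty] \cap I^n = (x_1, \ldots, x_g) \cap I^n$, again an outcome of the residual intersection hypotheses, feeds into the induction hypothesis $(x_1, \ldots, x_g) \cap I^n = (x_1, \ldots, x_g) I^{n-1}$ to yield $r_j \in (x_1, \ldots, x_g) I^{n-1}$, and hence $x_j r_j \in (x_1, \ldots, x_g) I^n$ as desired. Making this chain of implications rigorous, particularly the regularity-type behavior of $\widetilde x_{g+1}, \ldots, \widetilde x_d$ in $\widetilde R$, is where the essential use of $G_d$ and $AN^-_{d-2}$ enters.
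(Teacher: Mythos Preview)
Your overall strategy---reduce to Valabrega--Valla and feed in Claim~\ref{C1}---is exactly what the paper does. The gap is in your inductive step, specifically the ``absorption of the tail.'' You write $\alpha=\sum_{j\le g}x_jr_j+\sum_{j>g}x_jr_j$ with $r_j\in I^n$, pass to $\widetilde R=R/(x_1,\ldots,x_g)\!:\!I$, and assert that $\sum_{j>g}\widetilde x_j\widetilde r_j=0$ forces each $\widetilde r_j=0$. That inference is false even when $\widetilde x_{g+1},\ldots,\widetilde x_d$ \emph{are} a regular sequence on $\widetilde R$: the syzygies of a regular sequence are the Koszul relations, not the trivial ones, so at best you would get $\widetilde r_j\in(\widetilde x_{g+1},\ldots,\widetilde x_d)$, which does not feed back into your induction. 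And in the situation at hand there is no reason for those images to be regular on $\widetilde R$ at all: when $\dim R/I>0$ the ideal $\widetilde I$ has positive-dimensional support, so ${\rm grade}\,\widetilde I<d-g$ and general elements of $\widetilde I$ cannot form a full regular sequence. Your closing sentence concedes this point without resolving it.

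The paper avoids this trap by strengthening the induction hypothesis: instead of proving $(x_1,\ldots,x_g)\cap I^n=(x_1,\ldots,x_g)I^{n-1}$ directly, it proves $(x_1,\ldots,x_i)\cap I^n=(x_1,\ldots,x_i)I^{n-1}$ for \emph{every} $0\le i\le d$, by outer induction on $n$ and, for fixed $n$, \emph{descending} induction on $i$. The base $i=d$ is Claim~\ref{C1}. For the descending step one peels off a single generator:
\[
(x_1,\ldots,x_i)\cap I^n=(x_1,\ldots,x_i)\cap(x_1,\ldots,x_{i+1})I^{n-1}
=(x_1,\ldots,x_i)I^{n-1}+x_{i+1}\bigl[\bigl((x_1,\ldots,x_i):x_{i+1}\bigr)\cap I^{n-1}\bigr],
\]
and the residual input from \cite[Lemma~3.2]{PX1} gives $\bigl((x_1,\ldots,x_i):x_{i+1}\bigr)\cap I^{n-1}=(x_1,\ldots,x_i)\cap I^{n-1}$, which by the outer induction equals $(x_1,\ldots,x_i)I^{n-2}$. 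Thus the whole tail problem collapses to a one-element colon computation, which is exactly what $G_d$ and $AN^-_{d-2}$ control. Your attempt to handle all of $x_{g+1},\ldots,x_d$ simultaneously is what creates the unmanageable syzygy issue.
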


Since $x_1,\dots,x_g$ are general elements in $I$ and $g={\rm grade}\,I$, then $x_1,\dots,x_g$ form a regular sequence on $R$. By Valabrega-Valla criterion (see \cite[Proposition~2.6]{VV} or \cite[Theorem~1.1]{RV}),  we only need to  show $(x_1,\ldots,x_g)\cap I^n=(x_1,\ldots,x_g) I^{n-1}$ for every $n\geq 1$.
The case where $I$ is $\m$-primary  follows from \cite[Theorem 2.6]{RV3}, hence we may assume ${\rm dim}\,R/I>0$.
We use induction on $n$ to prove $(x_1,\ldots,x_i)\cap I^n=(x_1,\ldots,x_i) I^{n-1}$ for every $n\geq 1$ and $0\leq i\leq d$. This is clear if $n=1$. We then assume $n\geq 2$ and the equality holds for $n-1$. Now, we use descending induction on $i\leq d$. Since $I$ is $j$-stretched with $I^{K+1}=JI^K$, then, by Claim \ref{C1}, $J\cap I^{n}=JI^{n-1}$, which proves the case $i=d$. Now assume $i<d$ and, by induction, that $(x_1,\ldots,x_{i+1})\cap I^{n}=(x_1,\ldots,x_{i+1})I^{n-1}$. Then
\vskip 0.1in

\noindent \begin{tabular}{llll}
&&$(x_1,\ldots,x_i)I^{n-1}\subseteq (x_1, \ldots, x_{i})\cap I^{n}$ & \\
&=&$(x_1, \ldots, x_i)\cap (x_1,\ldots,x_{i+1})I^{n-1}$ & %\hspace{3.3cm}
\mbox{ by induction on }i\\
&=&$(x_1,\ldots,x_i)\cap ((x_1,\ldots,x_{i})I^{j-1}+x_{i+1}I^{n-1})$ &\\
&=& $(x_1,\ldots,x_{i})I^{n-1}+(x_1,\ldots,x_i)\cap x_{i+1}I^{n-1}$&\\
&=& $(x_1,\ldots,x_{i})I^{n-1}+x_{i+1}[((x_1,\ldots,x_{i}):x_{i+1})\cap I^{n-1}]$&\\
&=&$(x_1,\ldots,x_{i})I^{n-1}+x_{i+1}[(x_1,\ldots,x_{i})\cap I^{n-1}]$ & %\ \hspace{.02cm}
 \mbox{ by \cite[Lemma~3.2]{PX1}}\\
&= &$(x_1,\ldots,x_{i})I^{n-1}+x_{i+1}(x_1,\ldots,x_{i}) I^{n-2}$ %\hspace{2.3cm}
& \mbox{ by induction on }n\\
&$\subseteq$ &$(x_1,\ldots,x_i)I^{n-1}$&
\end{tabular}
\vskip 0.1in

\noindent
which yields the desired equality.
\vskip 0.1in

We are now ready to prove the theorem.

(a) $\Longleftrightarrow$ (b). The proof is similar to \cite[Theorem~3.8]{PX1}. Set  $\delta(I)=d-g$. We prove the equivalence of (a) and (b) by induction on $\delta(I)$.  If  $\delta(I)=0$, the assertion follows because we proved in Claim \ref{C2} that $x_1^{*}, \ldots, x_g^{*}$ form a regular sequence on ${\rm gr}_I(R)$.  Thus we may assume  that $\delta(I)\geq 1$ and the theorem holds for smaller values of $\delta(I)$. In particular, $d\geq g+1$. Since in both cases  $x_1^{*}, \ldots, x_g^{*}$ form a  regular sequence on ${\rm gr}_I(R)$, we may factor out $x_1, \ldots, x_g$ to assume $g=0$. Now $d=\delta(I) \geq 1$.
Set    $R^{\prime}=R/H_0$, where $H_0=0: I$, and use $^{\prime}$ to denote images in $R^{\prime}$. 
By \cite[Lemma~3.2]{PX1}, one has $I\cap H_0=0$,  $R^{\prime}$ is Cohen-Macaulay with  ${\rm dim}\,R^{\prime}=d$,
\, ${\rm grade}\,(I^{\prime})\geq 1$,  $\ell(I^{\prime})=d$, $I^{\prime}$ still satisfies  
$G_{d}$ and $AN^-_{d-2}$ on $R^{\prime}$ and ${\rm depth}\,(R^{\prime}/I^{\prime})\geq 
{\rm min}\{{\rm dim} \,R^{\prime}/I^{\prime}, 1\}$.
By the definition of $j$-stretchedness, one has  that $I^{\prime}$ is $j$-stretched in $R^{\prime}$ with $K=s(I^{\prime})$.    
Since $\delta(I^{\prime})=d-{\rm grade}\,(I^{\prime})<d=\delta(I)$, by induction hypothesis, 
 ${\rm depth}({\rm gr}_{I^{\prime}}(R^{\prime}))\geq d$ if and only 
 if ${I^{\prime}}^{K+1}=J^{\prime}{I^{\prime}}^K$.
Because  $I\cap H_0=0$, one has   ${I^{\prime}}^{K+1}/J^{\prime}{I^{\prime}}^K\cong I^{K+1}/JI^K$ and the following exact sequence
\begin{equation}\label{eq11}
0\rightarrow H_0\rightarrow {\rm gr}_I(R)\rightarrow {\rm gr}_{\overline{I}}(\overline{R})\rightarrow 0.
\end{equation}
Notice that ${\rm depth}_{\m G}(R/H_0)= d$. Hence,  we have 
${\rm gr}_I(R)$  is Cohen-Macaulay  if and only if ${\rm gr}_{I^{\prime}}(R^{\prime})$ is 
Cohen-Macaulay if and only if $I^{K+1}=JI^K$, i.e., $r(I)=K$. 

Finally, we assume $R$ is equicharacteristic and  prove  (b) $\Longleftrightarrow$ (c).  Clearly (b) implies (c).  To prove the converse,  notice that, for a general minimal reduction $J$ and a fixed minimal reduction $H$ of $I$, Lemma \ref{specializ} implies that
$\lambda(I^{t+1}/JI^t)\leq \lambda(I^{t+1}/HI^t)$ for $t\geq 0$ (see also the proof of Proposition \ref{generic1} in Section 5). Therefore,
$$
K=s(I)\leq r(I)\leq r_H(I).
$$
If (c) holds then one has $r_H(I)=K$ which, in turn,  yields $K=s(I)= r(I)$.
\QED

\bigskip

As an immediate application, we recover one of the two main results of Polini-Xie.
\begin{Corollary}$($\cite[Theorem~3.9]{PX1}$)$\label{PX1}
Let $R$ be a $d$-dimensional Cohen-Macaulay local ring and $I$  an $R$-ideal with $\ell(I) = d$. Assume ${\rm depth}\, (R/I) \geq {\rm min}\,\{{\rm dim}(R/I),1\}$ and $I$ satisfies $G_d$ and $AN_{
d-2}^-$. If $I$ has minimal $j$-multiplicity then ${\rm gr}_I(R)$ is Cohen-Macaulay.
\end{Corollary}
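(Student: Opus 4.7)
The plan is to derive this corollary as an immediate consequence of Theorem \ref{CM}. The hypotheses on $R$ and $I$ place us exactly in case~(1) of Setting \ref{ass}, so Theorem \ref{CM} becomes applicable once we verify that $I$ is $j$-stretched and that its reduction number equals its index of nilpotency.

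First, I would observe that minimal $j$-multiplicity amounts to $\lambda(\overline{I^2}/x_d\overline{I})=0$, which trivially forces $\lambda(\overline{I^2}/x_d\overline{I}+\overline{I^3})\leq 1$. Thus $I$ is $j$-stretched. From the formula $K-1=\lambda(\overline{I^2}/x_d\overline{I})$ recalled just before Corollary \ref{K}, I read off that the index of nilpotency equals $K=1$.

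Second, I would check that $r(I)=K$. The equality $\overline{I^2}=x_d\overline{I}$ lifts to the inclusion
\[
I^2 \subseteq (J_{d-1}:I^\infty)\cap I^2 + x_dI,
\]
and the identity $(J_{d-1}:I^\infty)\cap I^2 = J_{d-1}I$ (established via \cite[Lemma~3.2]{PX1}, exactly as invoked in the proof of Lemma \ref{Properties}(b) and Lemma \ref{non-inc}(b)) then gives $I^2 \subseteq J_{d-1}I + x_dI = JI$. Hence $r_J(I)\leq 1$, so $r(I)\leq r_J(I)\leq 1$. On the other hand, for a general minimal reduction $J$ one has $r_J(I)=r(I)$, so $s(I)=s_J(I)\leq r_J(I)=r(I)$, and since $K=s(I)=1$ we conclude $r(I)=1=K$. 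An application of Theorem \ref{CM} then yields that ${\rm gr}_I(R)$ is Cohen-Macaulay.

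The only non-formal ingredient is the identification $(J_{d-1}:I^\infty)\cap I^2 = J_{d-1}I$, which is where the residual intersection hypotheses $G_d$, $AN^-_{d-2}$ and the depth assumption on $R/I$ genuinely enter; however this is already packaged in the cited lemma from \cite{PX1}, so the proof reduces to assembling existing pieces and no real obstacle arises.
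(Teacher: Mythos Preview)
Your proof is correct and follows essentially the same route as the paper: both verify that $I$ is $j$-stretched with $r(I)=K=1$ and then invoke Theorem~\ref{CM}. The only difference is cosmetic---the paper simply cites \cite[Theorem~3.3]{PX1} for $r(I)\leq 1$, whereas you derive this inequality directly from the identity $(J_{d-1}:I^\infty)\cap I^2=J_{d-1}I$ of \cite[Lemma~3.2]{PX1}, which is a perfectly valid and slightly more self-contained alternative.
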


\demo If $I$ has minimal $j$-multiplicity then $r(I)\leq 1$ (see \cite[Theorem 3.4]{PX1}). Hence  $K=1$ and a straightforward application of Theorem \ref{CM} concludes the proof.
\QED
\bigskip

In the following, we provide   examples of $j$-stretched ideals which satisfy the assumptions of Theorem \ref{CM}, and therefore their  associated graded rings ${\rm gr}_I(R)$ are  Cohen-Macaulay by our theorem. Notice that the reduction number of the $j$-stretched ideal $I$ in Example \ref{ex1} could be arbitrarily large, hence, none of the previous criteria in the literature proves the Cohen-Macaulayness of ${\rm gr}_I(R)$.

\begin{Example}\label{ex1}
Fix any $r\geq 1$. Let $R=\mathbb{C}\llbracket{x,y,z}\rrbracket/(x^{r+1},xz,yz)$ and $I=(x,y)$. We have seen in
  Example \ref{r} that $R$ is
a $1$-dimensional Cohen-Macaulay local ring and $I$ is a Cohen-Macaulay  ideal of height 0 which has $\ell (I)=1$, $G_1$ condition, and  $AN^-_{-1}$. The ideal $I$ is also  $j$-stretched with reduction number $r$ (if $r> 2$ then $I$ does not have almost minimal $j$-multiplicity).
By computations, $s(I)=r=r(I)$. Hence by Theorem \ref{CM}, one has that  ${\rm gr}_I(R)$ is Cohen-Macaulay (indeed, by computations,  ${\rm gr}_I(R)\cong \mathbb{C}[x,y,z,t,u]/(x,y,zu,t^{r+1},zt)$).
\end{Example}

\begin{Example}\label{ex2}
Let $I$ be one of the following ideals:
\begin{itemize}
\item $I\subseteq R=k[a,b,c]$ is the defining ideal of $n=6$ generic points of $\mathbb P^2$.
\item $I\subseteq R=k[a,b,c,d]$ is the defining ideal of $n=4$ or $n=5$ generic points of $\mathbb P^3$.
\item $I=(a^2,ac,bc,bd,cd)\subseteq R=k[a,b,c,d]$.
\item $I=(ab,ac,ad,bc,bd,cd)\subseteq R=k[a,b,c,d]$.
\item $I=(a^2,b^2, ad,bd,cd)\subseteq R=k[a,b,c,d]$.
\item $I=(a^2,b^2,c^2,ab,bc,cd,de)\subseteq R=k[a,b,c,d,e]$.
\end{itemize}
Then, 
 $I$ satisfies all the assumptions of Theorem \ref{CM} and 
$r(I)=2=s(I)$. 
Therefore,  ${\rm gr}_I(R)$ is Cohen-Macaulay.
\end{Example}

The following theorems (Theorems \ref{dim2} and \ref{2})
 provide a sufficient condition for  ${\rm gr}_I(R)$ to be  almost Cohen-Macaulayness, where $I$ is a $j$-stretched ideal. They generalize \cite[Theorem 4.4]{RV}, \cite[Theorem 4.7]{PX1} and \cite[Theorem 4.10]{PX1}.

\begin{Theorem}\label{dim2}
Let $R$ be a $2$-dimensional Cohen-Macaulay local ring with infinite residue field. Let $I$ be a $j$-stretched ideal such that   $\ell (I)=2$,   $I$ satisfies  $G_{2}$ condition and
  $AN^-_{0}$, and ${\rm depth}\,(R/I)\geq {\rm Min}\{{\rm dim}\,R/I, 1\}$.
  Let $J=(x_1, x_2)$ be a general minimal reduction of $I$ and assume there exists a positive integer $p$ such that
  \begin{itemize}
\item[(i)] $\lambda(J\cap I^{n+1}/JI^n)=0$ for every $0\leq n\leq p-1.$
\item[(ii)]  $\lambda(I^{p+1}/JI^p)\leq 1$.
\end{itemize}
Then
\begin{itemize}
\item[(a)] $x_1^*$ is regular on ${\rm gr}_I(R)_+.$
\item[(b)] ${\rm depth}\,( {\rm gr}_I(R) )\geq 1$ .
\end{itemize}
\end{Theorem}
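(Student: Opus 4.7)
The plan is to prove part (a) via a Valabrega--Valla-style reduction to colength equalities, and then to deduce part (b) from (a) together with the depth hypotheses. As in the proof of Theorem \ref{CM}, I would first factor out $H_0 = 0 :_R I$ to reduce to the case $\mathrm{grade}(I) \ge 1$, so that the general element $x_1$ is a regular element of $R$. Under this reduction, claim (a) that $x_1^*$ is regular on $G_+$ is equivalent to the family of colength identities
\[
(I^{n+2} :_R x_1) \cap I^n \subseteq I^{n+1} \quad \text{for every } n \ge 1,
\]
i.e., $x_1 I^n \cap I^{n+2} = x_1 I^{n+1}$ for every $n \ge 1$.

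For part (a), I would split the argument into two regimes. When $1 \le n$ with $n+2 \le p$, condition (i) yields $J \cap I^{n+2} = JI^{n+1}$, and a descent argument isolating the $x_1$-component---analogous to the descending induction in Claim \ref{C2} and relying on a colon formula in the spirit of \cite[Lemma~3.2]{PX1} provided by the $G_2$ and $AN_0^-$ hypotheses---gives the desired equality $x_1 I^n \cap I^{n+2} = x_1 I^{n+1}$. When $n \ge p-1$, Lemma \ref{non-inc}(a) combined with condition (ii) forces $\nu_m = \lambda(I^{m+1}/JI^m) \le 1$ for every $m \ge p$. By Lemma \ref{Properties}(b), $I^{m+1} = JI^m + (a^m b)$, and Lemma \ref{Properties}(c) together with condition (ii) gives $\m \cdot a^m b \subseteq I^{m+2}+JI^m$. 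One must then show that this single ``excess generator'' $a^m b$ does not contribute elements of $x_1 I^n \cap I^{n+2}$ outside $x_1 I^{n+1}$; this uses that $a$ is a general element of $I$ distinct from $x_1$, so the propagated $a^m b$ enters the intersection with $(x_1)$ only in the controlled form predicted by Lemma \ref{Properties}(b)--(c).

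For part (b), once (a) is established, the annihilator $\mathrm{Ann}_G(x_1^*)$ is contained in $G_0 = R/I$. A standard fact about graded modules over Noetherian standard graded rings with local $G_0$ then yields depth at least $1$: a regular element of the graded maximal ideal on $G$ can be produced either as $x_1^*$ itself (when $\mathrm{Ann}_{G_0}(x_1^*)=0$) or as a perturbation $x_1^* + y$ for a suitable $y \in \m G_0$ chosen by prime avoidance using the depth hypothesis on $R/I$ from Setting \ref{ass}(1). The main obstacle in the whole argument is the high-index regime $n \ge p-1$ of part (a): carefully tracking how the excess element $a^m b$ propagates under multiplication and verifying that this propagation remains compatible with the Valabrega--Valla equality for $x_1$. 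It is precisely here that the dimension-$2$ hypothesis simplifies the bookkeeping, since only a single auxiliary general element $x_2$ enters the descent---this simplification is what allows a single integer $p$ in the hypothesis to suffice, in contrast with the higher-dimensional generalization pursued in Theorem \ref{2}.
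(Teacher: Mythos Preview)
Your low-regime argument for part (a) is reasonable and does parallel the descent in Claim~\ref{C2}. The genuine gap is in the high regime $n\ge p-1$. The element $a$ in Lemma~\ref{Properties}(b) is \emph{not} a general element of $I$: it is a fixed element of $I\setminus J$ arising from the structure equation $I^2=JI+(ab)$, and there is no mechanism making it ``distinct from $x_1$'' in any useful sense. The assertion that ``the propagated $a^m b$ enters the intersection with $(x_1)$ only in the controlled form predicted by Lemma~\ref{Properties}(b)--(c)'' is not an argument; Lemmas~\ref{Properties}(b)--(c) give no information about how $a^m b$ interacts with $(x_1)$ specifically, only with $J$ and $I^{m+2}$. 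For $p\le n\le r(I)-1$, where $\nu_n=1$, you still must rule out $x_1 I\cap I^{n+2}\supsetneq x_1 I^{n+1}$, and nothing in your sketch does this.

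The paper takes a different route here. After noting $\nu_n\le 1$ for $n\ge p$, it does \emph{not} attack the Valabrega--Valla equalities directly. Instead it sets $R'=R/(x_1)$, $s=r_{J'}(I')$, and reduces the problem to proving $r(I)=s$. Once $r(I)=s$, one checks case by case (using $J\cap I^{n+1}=JI^n$ for $n\le p-1$, the length-one constraint for $p\le n\le s-1$, and $I^{n+1}=JI^n$ for $n\ge s$) that $x_1 I\cap I^{n+1}=x_1 I\cap JI^n$ for all $n$, after which a descent as in \cite[4.7]{PX1} finishes. The equality $r(I)=s$ is obtained via the Ratliff--Rush filtration $\widetilde{I^n}I$ on the module $I$: since $x_1$ is regular on $I$, one has $\widetilde{I^{n+1}}I:_I x_1=\widetilde{I^n}I$, and comparing the $I'$-adic filtration with the filtration induced by Ratliff--Rush on $I'$ shows they share the Hilbert coefficient $e_1$; a reduction-number bound in the style of \cite[Theorem~4.2]{RV} then yields $r(I)\le s$. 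This Ratliff--Rush and Hilbert-coefficient machinery is the missing idea in your proposal.

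For part (b), your perturbation-and-prime-avoidance approach is more work than necessary. The paper simply uses the short exact sequence
\[
0\longrightarrow R/I \longrightarrow {\rm gr}_I(R)\longrightarrow {\rm gr}_I(R)_+\longrightarrow 0
\]
together with part (a) and the hypothesis ${\rm depth}(R/I)\ge 1$ (the $\m$-primary case being handled separately by \cite[Theorem~4.4]{RV}).
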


\demo We first prove part (a). 
 If $I$ is $\m$-primary then both claims follow from \cite[Theorem 4.4]{RV}. Thus we may assume that ${\rm dim}\,(R/I)>0$.
Since $\lambda(I^{p+1}/JI^p)\leq 1$,
one has $I^{p+1}=(ab)+JI^p$ for some $a\in I, b\in I^p$ with $ab\notin JI^p$. For $n\geq p$, the multiplication by $a$ gives a surjective map from $I^{n+1}/JI^{n}$ to $I^{n+2}/JI^{n+1}$. Thus the length $\lambda (I^{n+1}/JI^{n})\leq 1$ for every $n\geq p$.

Notice that $x_1$ is regular on $I$, since $(0: x_1)  \cap I =0$ (by \cite[Lemma~3.2]{PX1}).  To prove that  $x_1^*$ is regular on ${\rm gr}_I(R)_+={\rm gr}_I(I)$, we only need to show $x_1 I\cap I^nI=x_1 I^{n-1}I$
for every $n\geq 1$ by \cite[Proposition~2.6]{VV} (see also \cite[Lemma~1.1]{RV}).
This is clear if $n=1$; hence we may  assume $n\geq 2$. Let $^{\prime}$ denote images
in $R^{\prime}=R/(x_1)$ and set  $s=r (I^{\prime})$. We claim that it is enough to show   $r(I)=s$. Indeed, if $r(I)=s$, then $x_1 I\cap I^nI=x_1I\cap JI^{n-1}I$ for every $n\geq 1$. This is clear  if $s\leq p$.
Assume $s>p$. If $n\leq p-1$, then $x_1I\cap I^{n}I=x_1I\cap J\cap I^{n}I=x_1I\cap JI^{n-1}I$.
If $p\leq n\leq s-1$, then 
\vskip -.4cm
\begin{eqnarray*}
0&<&\lambda (I^n I/ JI^{n-1}I+(x_1) \cap I^{n} I)\\
&=&\lambda(I^n I/ JI^{n-1}I) - \lambda(JI^{n-1}I+(x_1) \cap I^{n}I / JI^{n-1}I)\\
&=& 1 - \lambda(JI^{n-1}I+(x_1) \cap I^{n}I/JI^{n-1}I),
\end{eqnarray*}
which yields $JI^{n-1}I+(x_1)\cap I^{n}I=JI^{n-1}I$. 
Furthermore, if $n\geq s= r(I)$, then  $I^nI=JI^{n-1}I$ and, therefore, $(x_1) I\cap I^nI=x_1I\cap JI^{n-1}I$ for every  $n\geq s$. 
Now an argument similar to the one of  \cite[4.7]{PX1} gives  $x_1 I\cap I^nI=x_1 I^{n-1}I$
for every $n\geq 1$.

To complete the proof of part (a), we still need to to show that $r(I)=s$. This follows by an argument similar to the one employed in \cite[4.7]{PX1}. We write it for the sake of completeness. We use a  result on  the Ratliff-Rush filtration
$\widetilde{I^n} I:=\cup_{t\geq 1}(I^{n+t}I:_{I} I^t)$  (see \cite[Theorem~4.2]{RV} or \cite[Corollary 4.5]{PX1}). Since  $x_1$ is regular on $I$,  by  \cite[Lemma~3.1]{RV},
%${\rm depth}_I\, IM \geq 1$. The latter condition implies that
there exists an integer $n_0$ such that
$I^nI=\widetilde{I^n} I$ for $n\geq n_0$, and
\begin{equation}\label{RRF}
\widetilde{I^{n+1}} I  :_{I} x_1=\widetilde{I^{n}}I \quad \mbox{for every } n\geq 0.
\end{equation}

On the quotient ring $R^{\prime}=R/(x_1)$, there are two filtrations:
$$
\mathbb{M}: I^{\prime}\supseteq  {I^{\prime}}^2\supseteq\ldots\supseteq {I^{\prime}}^{n} \supseteq \ldots
$$
and
$$
\mathbb{N}:I^{\prime}\supseteq \widetilde{I} I^{\prime}\supseteq\ldots\supseteq \widetilde{I^{n-1}} I^{\prime}\supseteq \ldots
$$
Notice that $\mathbb{M}$ is an  $I^{\prime}$-adic filtration and $\mathbb{N}$ is a good $I^{\prime}$-filtration (see \cite[page ~9]{RV} for the definition of good filtrations).
Notice   $\lambda (I^{\prime}/ {I^{\prime}}^2) < \infty $.
%Thus, we are  in the context of the filtrations as treated in \cite{RV}.
Since $ I^{n} I^{\prime}=\widetilde{I^{n}}I^{\prime}$ for $n\geq n_0$, the associated graded modules  ${\rm gr}_{\mathbb{M}}(I^{\prime})$ and ${\rm gr}_{\mathbb{N}}(I^{\prime})$
have the same Hilbert coefficients $e_0$ and $e_1$. Since    $I$ contains a non zero divisor 
 on $I^{\prime}$,  by \cite[Lemmas~2.1 and 2.2]{RV}, we have
\begin{eqnarray*} \label{eq3}
&\sum_{n\geq 0}^{p-2} \lambda(I^{n+1}I/JI^nI)+(s-1)-(p-2)\\
&=\sum_{n\geq 0} \lambda({I^{\prime}}^{n+2}/ J^{\prime}{I^{\prime}}^{n+1}) =e_1(\mathbb {M})=e_1(\mathbb {N}) =
\sum_{n\geq 0} \lambda(\widetilde{I^{n+1}}I^{\prime}/ J\widetilde{I^{n}}I^{\prime}).
\end{eqnarray*}

 The first equality follows from the fact that, for $0\leq n \leq s-1$, one has
$$
\lambda(I^{n+1}/JI^{n})=\lambda({I^{\prime}}^{n+1}/J^{\prime}{I^{\prime}}^{n}).
$$
Indeed, if $0\leq n\leq p-1$ then $\lambda(J\cap I^{n+1}/JI^{n})=0$. Therefore,
\begin{eqnarray*}
&\lambda(I^{n+1}/JI^{n})=\lambda(I^{n+1}/J\cap I^{n+1})=\lambda({I^{\prime}}^{n+1}/J^{\prime}\cap {I^{\prime}}^{n+1})\\
&\leq \lambda({I^{\prime}}^{n+1}/J^{\prime}{I^{\prime}}^n)\leq \lambda(I^{n+1}/JI^n).
\end{eqnarray*}
On the other hand, if $p\leq n\leq s-1$, we have $0< \lambda({I^{\prime}}^{n+1}/J^{\prime}{I^{\prime}}^n) \leq \lambda(I^{n+1}/JI^n)=1$. This proves that $\lambda({I^{\prime}}^{n+1}/J^{\prime}{I^{\prime}}^n) = \lambda(I^{n+1}/JI^n)=1$ for $p\leq n\leq s-1$
 and $\lambda({I^{\prime}}^{n+1}/J^{\prime} {I^{\prime}}^n) =0$ for $n \geq s$.
 \medskip

We now prove that $ \lambda(\widetilde{I^{n+1}}I^{\prime}/ J\widetilde{I^{n}}I^{\prime}) =  \lambda(\widetilde{I^{n+1}} I/ J \widetilde{I^n}I)$ for every $n\geq 0$.
Since
$$\widetilde{I^{n+1}}I^{\prime}/ J\widetilde{I^{n}} I^{\prime}\cong \widetilde{I^{n+1}} I/((x_1)\cap \widetilde{I^{n+1}}I+ x_2\widetilde{I^{n}}I),
$$
we just need to show $(x_1)\cap \widetilde{I^{n+1}}I=(x_1)\widetilde{I^{n}} I$.
We first prove $(x_1)\cap \widetilde{I}I=x_1I$. Since $(x_1)\cap \widetilde{I}I\supseteq x_1I$, it suffices to show the equality locally at every associated prime ideal of $R/x_1I$. By Lemma \cite[3.2]{PX1}, every $\p\in {\rm Ass}(R/x_1I)$ is not maximal. Hence $(x_1)_\p=\widetilde{I}_\p=I_\p$ and $(x_1)_\p\cap \widetilde{I} I_\p= \widetilde{I} I_\p=x_1I_\p$. This shows $(x_1)\cap \widetilde{I}I=x_1I$. Now for any $n\geq 1$, $(x_1)\cap \widetilde{I^{n+1}}I=x_1I \cap \widetilde{I^{n+1}}I=x_1(\widetilde{I^{n+1}}I :_{I} x_1)=x_1\widetilde{I^{n}} I$. Hence we have
\begin{equation}\label{eq4}
\sum_{n \ge 0}\lambda(\widetilde{I^{n+1}} I/ J \widetilde{I^n} I) =\sum_{n\geq 0}^{p-2} \lambda(I^{n+1}I/JI^nI)+(s-1)-(p-2).
\end{equation}

Let $W_J=\{t\in \mathbb{N}\,\mid J\widetilde{I}^nI\cap I^{n+1}I=JI^{n}I, \,0\leq n\leq t\}$. Then $p-2\in W_J$. Hence, by \cite[Theorem~4.2]{RV}, we have
$$r(I)\leq \sum_{n\geq 0}\lambda(\widetilde{I^{n+1}} I/ J \widetilde{I^n} I)+p-1-\sum_{n=0}^{p-2}\lambda (I^{n+1}I/JI^{n}I)=s.$$

Finally, since   ${\rm depth}\,(R/I)>0$ and 
$
0\rightarrow R/I \rightarrow {\rm gr}_I(R) \rightarrow {\rm gr}_I(R)_+ \rightarrow 0
$ is exact, by part (a), we have $${\rm depth} ({\rm gr}_I(R))\geq {\rm min} \{{\rm depth} \, R/I , {\rm depth} ({\rm gr}_I(R)_+) \} \geq 1.$$
\QED
\bigskip

We can now prove our second main result.

\begin{Theorem}\label{2}$[$Sally's Conjecture for $j$-stretched ideals$]$
 Assume $R$ and $I$ satisfy  Setting \ref{ass} (1). Let  $I$  be  $j$-stretched. If there exists a positive integer $p$ such that
  \begin{itemize}
\item[(a)] $\lambda(J\cap I^{n+1}/JI^n)=0$\, for every $0\leq n\leq p-1$, 
\item[(b)]  $\lambda(I^{p+1}/JI^p)\leq 1$,
\end{itemize}
then
\begin{itemize}
\item[(i)] for a general $x_1\in I$, \,$x_1^*$ is regular  on ${\rm gr}_I(R)_+$.
\item[(ii)] ${\rm depth}\,( {\rm gr}_I(R) )\geq d-1$.
\end{itemize}
\end{Theorem}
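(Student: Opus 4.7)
The plan is to induct on $d = \dim R$, with base case $d=2$ furnished by Theorem \ref{dim2}. I would prove part (i) directly in every dimension by extending the Ratliff--Rush argument of Theorem \ref{dim2}, and then use it to push through the reduction modulo $x_1$ needed in the inductive step for part (ii). Before starting the induction, I would reduce to $g := \mathrm{grade}(I) \geq 1$ exactly as in the proof of Theorem \ref{CM}: modding out $H_0 := 0{:}I$, the identity $I \cap H_0 = 0$ (\cite[Lemma~3.2]{PX1}) ensures that Setting \ref{ass}(1), the $j$-stretchedness, and the numerical conditions (a) and (b) all transfer to $I/H_0 \subset R/H_0$, while the exact sequence $0 \to H_0 \to \mathrm{gr}_I(R) \to \mathrm{gr}_{I/H_0}(R/H_0) \to 0$ (whose kernel sits in degree $0$ and is annihilated by $\mathrm{gr}_I(R)_+$) transfers the depth bound back to $R$. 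So assume $g \geq 1$.

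For part (i), fix a general minimal reduction $J = (x_1, \ldots, x_d)$ of $I$; since $g \geq 1$, $x_1$ is $R$-regular. By Valabrega--Valla applied to $\mathrm{gr}_I(R)_+$ (\cite[Lemma~1.1]{RV}), it suffices to prove $x_1 I \cap I^{n+1} = x_1 I^n$ for every $n \geq 1$. Hypothesis (a) gives $J \cap I^{n+1} = JI^n$ for $0 \leq n \leq p-1$, while hypothesis (b) combined with Lemma \ref{non-inc}(a) forces $\lambda(I^{n+1}/JI^n) \leq 1$ for every $n \geq p$. Setting $R' = R/(x_1)$, $I' = IR'$, and $s = r(I')$, I would then compare the $e_1$-coefficients of the $I'$-adic and Ratliff--Rush filtrations $\widetilde{I^n} I$ on $R'$ (using \cite[Theorem~4.2]{RV} and \cite[Corollary~4.5]{PX1}, both valid in arbitrary dimension) to derive $r(I) \leq s$. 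Since $I^{n+1} = JI^n$ for $n \geq r(I)$ and hypothesis (a) gives the required Valabrega--Valla equality for $n \leq p-1$, an argument parallel to that of \cite[Theorem~4.7]{PX1} extends from the dim-$2$ case of Theorem \ref{dim2} to yield $x_1 I \cap I^{n+1} = x_1 I^n$ for every $n \geq 1$.

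For part (ii), the base case $d = 2$ is Theorem \ref{dim2}. For $d \geq 3$, I would pass to $R' = R/(x_1)$ with $I' = IR'$. Since $x_1$ is part of a general minimal reduction, $\overline{R'} = \overline{R}$ and $\overline{I'} = \overline{I}$, hence $I'$ remains $j$-stretched; the other Setting \ref{ass}(1) hypotheses transfer as $R'/I' = R/I$. Hypotheses (a) and (b) also descend with the same $p$: (b) via the surjection $I^{p+1}/JI^p \twoheadrightarrow (I')^{p+1}/J'(I')^p$, and (a) because $J \cap I^{n+1} = JI^n$ for $0 \leq n \leq p-1$ immediately gives $J' \cap (I')^{n+1} = J'(I')^n$ in the same range. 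By induction, $\mathrm{depth}\,\mathrm{gr}_{I'}(R') \geq d - 2$; combined with the regularity of $x_1^*$ on $\mathrm{gr}_I(R)_+$ from part (i), this yields $\mathrm{depth}\,\mathrm{gr}_I(R)_+ \geq d - 1$. Finally, the exact sequence $0 \to R/I \to \mathrm{gr}_I(R) \to \mathrm{gr}_I(R)_+ \to 0$ together with $\mathrm{depth}(R/I) \geq 1$ (from Setting \ref{ass}(1), since $g \geq 1$ implies $\dim R/I \geq 1$) gives $\mathrm{depth}\,\mathrm{gr}_I(R) \geq d - 1$.

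The main obstacle is part (i): for $n \geq p$, the length $\lambda(J \cap I^{n+1}/JI^n)$ may jump to $1$ and cannot be controlled by (a) alone. The crucial step is extending the Ratliff--Rush length comparison from Theorem \ref{dim2} to arbitrary $d$, which hinges on the dimension-free identity $\widetilde{I^{n+1}} I :_I x_1 = \widetilde{I^n} I$ combined with $AN^-_{d-2}$ via \cite[Lemma~3.2]{PX1}, showing that the $e_1$-coefficient of the $I'$-adic filtration on $R' = R/(x_1)$ agrees with that of its Ratliff--Rush refinement.
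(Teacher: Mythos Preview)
Your overall architecture (induction on $d$, base case Theorem~\ref{dim2}, reduction to $\mathrm{grade}\,I\geq 1$ via $H_0=0{:}I$, passage to $R'=R/(x_1)$) matches the paper. The divergence is in how part~(i) is obtained in dimension $d\geq 3$, and this is where your proposal has a genuine gap.

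You attempt to prove (i) directly in every dimension by lifting the Ratliff--Rush/$e_1$ argument of Theorem~\ref{dim2}. But that argument is intrinsically one-dimensional: the identity
\[
\sum_{n\geq 0}\lambda\!\bigl({I'}^{n+2}/J'{I'}^{n+1}\bigr)=e_1(\mathbb M)=e_1(\mathbb N)=\sum_{n\geq 0}\lambda\!\bigl(\widetilde{I^{n+1}}I'/J\widetilde{I^n}I'\bigr)
\]
from \cite[Lemmas~2.1, 2.2]{RV} requires the filtered module $I'$ to be one-dimensional, which holds only when $d=2$. For $d\geq 3$ the module $I'$ over $R'=R/(x_1)$ has dimension $d-1\geq 2$, and neither the $e_1$-as-a-sum-of-lengths formula nor the Rossi reduction-number bound of \cite[Theorem~4.2]{RV} is available (the higher-dimensional analogue is an open conjecture). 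So the chain of equalities that produces $r(I)\leq s$ in Theorem~\ref{dim2} collapses, and your ``dimension-free'' claim about \cite[Theorem~4.2]{RV} is not justified.

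There is a second, related gap in your inductive step for (ii). Regularity of $x_1^*$ only on $\mathrm{gr}_I(R)_+$ does \emph{not} give $\mathrm{gr}_I(R)_+/x_1^*\mathrm{gr}_I(R)_+\cong \mathrm{gr}_{I'}(R')_+$: you would need $(x_1)\cap I^n = x_1 I^{n-1}$ for all $n$, i.e.\ regularity on the full $\mathrm{gr}_I(R)$, to identify the quotient with something the induction hypothesis controls. The paper handles both issues simultaneously by not proving (i) directly: it carries (i) inductively, and then invokes \cite[Lemmas~4.8 and 4.9]{PX1} to lift the inductively known regularity of $x_2^*$ on $\mathrm{gr}_{I'}(R')_+$ to regularity of $x_1^*$ on the \emph{whole} ring $\mathrm{gr}_I(R)$ (after the reduction to $\mathrm{grade}\,I\geq 1$). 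That stronger conclusion makes the depth-lifting $\mathrm{depth}\,\mathrm{gr}_I(R)=1+\mathrm{depth}\,\mathrm{gr}_{I'}(R')\geq d-1$ immediate. The key ingredient you are missing is precisely this lifting lemma from \cite{PX1}.
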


\demo    We prove the
theorem by induction on $d$. The case  $d=2$ has been  proved in Theorem
\ref{dim2}. Let $d\geq 3$ and assume the theorem holds for $d-1$. We
first reduce to the case of ${\rm grade}\,I  \geq 1$. If ${\rm
grade}\,I=0$,  let $H_0=0: I$. As in the proof of Theorem \ref{CM}, all assumptions still hold for the quotient ring $R/H_0$.
Furthermore,  $I/H_0 \cap I=I$, ${\rm grade} (I/H_0\cap I) \geq 1$ and
${\rm depth}\,( {\rm gr}_I(R) )\geq {\rm depth}\,( {\rm
gr}_{I}(R/H_0) )$. So we are reduced to the case where the ideal $I$
contains at least one regular element on $R$.  Thus\,  $x_1$ is regular on $R$.

If  ${\rm dim}\,R/I=0$ then the assertion  follows from
\cite[Theorem 4.4]{RV}. Hence, we may assume ${\rm dim}\,R/I\,>\,0$. Let
$^{\prime}$  denote images in $R^{\prime}=R/(x_1)$.   Observe
that $R^{\prime}$ is a Cohen-Macaulay ring of dimension $d-1$
and $\ell(I^{\prime})=d-1$. Also, $I^{\prime}$ satisfies $G_{d-1}$ and
$AN^-_{d-3}$ (see \cite[Lemma~3.2]{PX1}).
Furthermore, observe that $R^{\prime}/I^{\prime} \cong R/I $, whence
${\rm depth}\,(R^{\prime}/I^{\prime})={\rm depth}\,(R/I)\geq
{\rm min}\{{\rm dim} \, R/I, 1\}= \{{\rm dim} \,
R^{\prime}/I^{\prime}, 1\}$. Clearly, $I^{\prime}$ is
$j$-stretched in $R^{\prime}$. By induction hypothesis, for a general $x_2\in I$, $x_2^*$ is regular  on ${\rm gr}_I^{\prime}(R^{\prime})_+$, and ${\rm
depth}\,( {\rm gr}_{I^{\prime}}(R^{\prime}) )\geq d-2.$

 By \cite[Lemmas~4.8 and 4.9]{PX1}, one has  that  $x_1^*$ is regular on ${\rm gr}_I(R)$. Since ${\rm depth}({\rm gr}_{I^{\prime}}(R^{\prime}))\geq d-2$ and $x_1^*$ is regular on ${\rm gr}_I(R)$, we have ${\rm depth}({\rm gr}_{I}(R))\geq d-1$. \QED
\bigskip

As an application of Theorem \ref{2}, we  obtain a sufficient condition for the almost Cohen-Macaulayness of the associated graded rings of $j$-stretched ideals.

\begin{Corollary}\label{3}
Let  $R$ and $I$ be as in Setting \ref{ass}. If $I$ is $j$-stretched with index of nilpotency $K$, then
 \begin{itemize}
\item[(a)]   $I^{K+1}\subseteq JI^{K-1}$ if and only if $\lambda(I^{K}/JI^{K-1})=1$.
\item[(b)]  If $I^{K+1}\subseteq JI^{K-1}$  then ${\rm depth}({\rm gr}_I(R))\geq d-1.$
\end{itemize}
\end{Corollary}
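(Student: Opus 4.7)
The plan is to establish part (a) via a direct analysis of the cyclic quotient $I^K/JI^{K-1}$, using the structural results of Section 3, and then to deduce part (b) as a straightforward application of Theorem \ref{2} with the specific choice $p = K-1$. Throughout, I may assume $I$ does not have minimal $j$-multiplicity: if it does, then by Corollary \ref{PX1} the associated graded ring ${\rm gr}_I(R)$ is already Cohen-Macaulay, so both conclusions hold trivially, and in this range Lemma \ref{Properties} becomes applicable.

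For part (a), Lemma \ref{Properties}(b) applied with $n = K-1$ yields $I^K = JI^{K-1} + (a^{K-1}b)$, so $I^K/JI^{K-1}$ is a cyclic $R$-module generated by the class of $a^{K-1}b$. Because $I^K \not\subseteq J$ by Corollary \ref{K} while $JI^{K-1} \subseteq J$, this class is nonzero and $\lambda(I^K/JI^{K-1}) \geq 1$. For the forward implication, the hypothesis $I^{K+1} \subseteq JI^{K-1}$ combined with Lemma \ref{Properties}(c) gives $a^{K-1}b \cdot \m \subseteq I^{K+1} + JI^{K-1} = JI^{K-1}$, forcing the length to be exactly one. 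For the reverse implication, the assumption $\lambda(I^K/JI^{K-1}) = 1$ means $\m \cdot a^{K-1}b \subseteq JI^{K-1}$; since $a \in I \subseteq \m$, multiplying by $a$ yields $a^K b \in JI^{K-1}$, and combining this with $I^{K+1} = JI^K + (a^K b)$ from Lemma \ref{Properties}(b) gives $I^{K+1} \subseteq JI^{K-1}$.

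For part (b), I apply Theorem \ref{2} with $p = K-1$. Condition (a) of Theorem \ref{2} requires $\lambda(J \cap I^{n+1}/JI^n) = 0$ for $0 \leq n \leq K-2$; this follows from Proposition \ref{VV}(b), which guarantees that the inclusion $I^{K+1} \subseteq JI^{K-1}$ is equivalent to the Valabrega-Valla equalities $J \cap I^{t+1} = JI^t$ for every $t \leq K-1$. Condition (b) of Theorem \ref{2} becomes $\lambda(I^K/JI^{K-1}) \leq 1$, which is precisely what part (a) of this corollary provides. The conclusion ${\rm depth}({\rm gr}_I(R)) \geq d-1$ then follows immediately from Theorem \ref{2}(ii).

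The only subtlety I anticipate is reconciling the statement of Corollary \ref{3}, which is phrased under either case of Setting \ref{ass}, with the fact that Theorem \ref{2} is formulated only under Setting \ref{ass}(1). In the $\m$-primary case (Setting \ref{ass}(2)) one cannot invoke Theorem \ref{2} directly, but the very same numerical conditions produced by part (a) plug into the classical Rossi-Valla theorem \cite[Theorem 4.4]{RV}, which handles precisely this zero-dimensional reduction and yields the same depth conclusion. Thus no essentially new argument is required to cover the $\m$-primary setting.
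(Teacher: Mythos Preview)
Your proof is correct and follows essentially the same approach as the paper: part (a) is exactly the argument behind \cite[Proposition~3.1]{RV3} (which the paper simply cites rather than writing out), and part (b) is the same application of Theorem~\ref{2} with $p=K-1$ after invoking Proposition~\ref{VV}(b). Your closing remark about the $\m$-primary case is a fair observation that the paper glosses over, resolving it implicitly through \cite[Theorem~4.4]{RV}.
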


\demo
Part (a) follows by the same argument as in \cite[Proposition~3.1]{RV3}.  From part (a) and Proposition \ref{VV}, one has
 $\lambda(I^{K}/JI^{K-1})\leq 1$ and  $\lambda(J\cap I^{n+1}/JI^n)=0$  for every $0\leq n\leq K-1$. Hence 
 part (b) follows by applying Theorem \ref{2} with $p=K-1$.
\QED
\bigskip

As a special case of Corollary \ref{3}, we  recover also the second main result of Polini-Xie \cite{PX1}. 
\begin{Corollary}$($\cite[Theorem~4.10]{PX1}$)$\label{PX2}
Let $R$ be a $d$-dimensional Cohen-Macaulay local ring and let $I$ be an ideal with $\ell(I) = d$, ${\rm depth}\, (R/I) \geq {\rm min}\{{\rm dim}\,(R/I),1\}$ and $I$ satisfies $G_d$ and $AN^-_{d-2}$. If $I$ has almost minimal $j$-multiplicity, then ${\rm depth}\,({\rm gr}_I(R)) \geq d-1.$
\end{Corollary}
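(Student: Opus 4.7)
The plan is to deduce this directly from Corollary \ref{3}, via a short case analysis on the index of nilpotency $K$.

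First, I would observe that the hypotheses on $R$ and $I$ are exactly those of Setting \ref{ass}(1), so all earlier results of the section apply. Next, since $I$ has almost minimal $j$-multiplicity, by definition $\lambda(\overline{I^2}/x_d\overline{I})\leq 1$, so $I$ is automatically $j$-stretched (as already observed just after Definition \ref{Def}). Moreover, the identity $K-1=\lambda(\overline{I^2}/x_d\overline{I})$ recorded immediately before Corollary \ref{K} gives $K\leq 2$.

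In the case $K=1$, the ideal has minimal $j$-multiplicity, and Corollary \ref{PX1} already implies that ${\rm gr}_I(R)$ is Cohen--Macaulay; in particular ${\rm depth}\,({\rm gr}_I(R))=d\geq d-1$. This disposes of the first case without further work.

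In the remaining case $K=2$, I would invoke Corollary \ref{K}, which asserts $\nu_1=K-1$, so
\[
\lambda(I^{K}/JI^{K-1})=\lambda(I^{2}/JI)=\nu_1=1.
\]
Corollary \ref{3}(a) then upgrades this to $I^{K+1}\subseteq JI^{K-1}$, and Corollary \ref{3}(b) immediately yields ${\rm depth}\,({\rm gr}_I(R))\geq d-1$. I do not expect a serious obstacle here: the whole argument is essentially the observation that almost minimal $j$-multiplicity forces $K\leq 2$ and hence matches precisely one of the two branches prepared by Corollaries \ref{PX1} and \ref{3}. The only delicate point is the identification $\nu_1=K-1$, which has been set up in Corollary \ref{K} specifically to make this kind of reduction work.
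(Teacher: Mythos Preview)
Your proof is correct and follows essentially the same route as the paper: reduce to $K\leq 2$ and then feed the case $K=2$ into Corollary~\ref{3}. The only cosmetic difference is that the paper does not split off the case $K=1$ and, instead of invoking Corollary~\ref{3}(a), argues directly that $I^2\cap J=JI$ (from $I^2\nsubseteq J$ and $\lambda(I^2/JI)=1$) so that $I^3\subseteq J\cap I^2=JI$, before applying Corollary~\ref{3}(b).
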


\demo
If $I$ has almost minimal $j$-multiplicity then $K=2$. Since $I^2\nsubseteq J$ and $\lambda(I^2/IJ)=1$, one has $I^2\cap J=IJ$. Therefore,  $I^3\subseteq JI$. Now Corollary \ref{3} finishes the proof with $K=2$.
\QED
\bigskip

In \cite{R1} and \cite{RV3}, it was introduced the concept of type of an ideal $I$ with respect to a given minimal reduction $J$ of $I$. This was defined as $\tau(I)=\lambda((J:I)\cap I/J)$, a number that depends heavily on the choice of $J$.
Here we introduce a slight variation of this concept that fits with our setting. For a {\it general} minimal reduction $J$ of $I$, we set $$\tau(I)=\lambda((J:I)\cap I/J),$$ and call it the {\it general Cohen-Macaulay type} of $I$.
It follows immediately from the Specialization Lemma (\ref{specializ}) that, in presence of the $G_d$ condition, this number is 
well-defined, because it is constant for $J$ general.

\begin{Lemma}\label{type}
Assume $R$ is Cohen-Macaulay.
Let $I$ be an ideal having  $\ell(I)=d$ and the $G_d$ condition.
Then the number $\tau(I)$  is independent of the general minimal reduction $J$.
\end{Lemma}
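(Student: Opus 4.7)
The plan is to express $\tau(I)$ as a difference of two colengths, each of which is shown to be constant for $J$ general by a direct application of the Specialization Lemma (Lemma~\ref{specializ}(a)). Since $J\subseteq (J:I)\cap I\subseteq I$, the short exact sequence
\[
0\longrightarrow ((J:I)\cap I)/J\longrightarrow I/J\longrightarrow I/(J:I)\cap I\longrightarrow 0
\]
gives, whenever both outer terms have finite length, $\tau(I)=\lambda(I/J)-\lambda(I/(J:I)\cap I)$. Thus it suffices to show that each of $\lambda(I/J)$ and $\lambda(I/(J:I)\cap I)$ is independent of $J$, for $J$ a general minimal reduction.

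The setup is the standard one for the Specialization Lemma. Write $I=(a_1,\dots,a_s)$ and introduce $ds$ indeterminates $\underline{z}=(z_{ij})$ over $R$; set $S=R[\underline{z}]$, $y_i=\sum_{j=1}^s z_{ij}a_j$ and $\mathcal{J}=(y_1,\dots,y_d)\subseteq IS$. For any $\underline{\alpha}=(\alpha_{ij})\in R^{ds}$, the specialization of the $S$-ideal $\mathcal{J}$ is exactly $J(\underline{\alpha})=(x_1,\dots,x_d)$ with $x_i=\sum_j\alpha_{ij}a_j$; by definition $J(\underline{\alpha})$ is a general minimal reduction of $I$ for $\overline{\underline{\alpha}}$ in a Zariski dense open subset $U_0\subseteq k^{ds}$.

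Taking $M=I$ (a finite $R$-module) and $M'=IS$, I would apply Lemma~\ref{specializ}(a) twice. First, with $N'=\mathcal{J}\subseteq IS$: the faithfully flat extension $R\to S_{\mathfrak{m}S}$ preserves the analytic spread, so $\mathcal{J}_{\mathfrak{m}S}$ is a minimal reduction of $IS_{\mathfrak{m}S}$ and $\delta_1:=\lambda_{S_{\mathfrak{m}S}}(IS_{\mathfrak{m}S}/\mathcal{J}_{\mathfrak{m}S})<\infty$; since $N'_{\underline{\alpha}}=J(\underline{\alpha})$, the lemma yields $\lambda_R(I/J(\underline{\alpha}))=\delta_1$ on a dense open $U_1\subseteq k^{ds}$. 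Second, with $N'=(\mathcal{J}:_S IS)\cap IS\subseteq IS$: for $\underline{\alpha}$ in a dense open $U_2\subseteq k^{ds}$ one has $N'_{\underline{\alpha}}=(J(\underline{\alpha}):_R I)\cap I$, and Lemma~\ref{specializ}(a) gives $\lambda_R(I/(J(\underline{\alpha}):I)\cap I)=\delta_2$ for such $\underline{\alpha}$. Choosing $\underline{\alpha}$ in $U_0\cap U_1\cap U_2$ then yields $\tau(I)=\delta_1-\delta_2$, independent of the general minimal reduction.

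The main technical step — and the point at which the $G_d$ hypothesis enters — is the verification that colon ideals and intersections commute with general specialization, i.e.\ that $((\mathcal{J}:_S IS)\cap IS)_{\underline{\alpha}}=(J(\underline{\alpha}):_R I)\cap I$ for $\underline{\alpha}$ generic. Without some residual control this can fail, as specialization does not commute with colon in general. However, under $G_d$, the ideal $\mathcal{J}:_S IS$ is generically a $d$-residual intersection of $IS$ (cf.\ \cite[Lemma~3.1]{PX1}), so no extraneous embedded components arise under specialization and the expected equality holds on a dense open set in the sense of Nhi--Trung \cite{NT}. This is the only nontrivial ingredient beyond Lemma~\ref{specializ}(a); once it is in hand, the conclusion is immediate.
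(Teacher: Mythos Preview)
Your approach is correct and matches the paper's intended argument: the paper gives no proof beyond the sentence ``It follows immediately from the Specialization Lemma,'' and the decomposition $\tau(I)=\lambda(I/J)-\lambda(I/(J{:}I)\cap I)$ together with two applications of Lemma~\ref{specializ}(a) is exactly the technique the paper uses for the analogous invariant $s_J(I)$ in Proposition~\ref{nilpotency2}. The one point you flag---that the specialization $\bigl((\mathcal{J}:_S IS)\cap IS\bigr)_{\underline{\alpha}}$ must agree with $(J(\underline{\alpha}):_R I)\cap I$ for general $\underline{\alpha}$---is genuinely the only nontrivial step; the paper does not isolate it, but your appeal to the Nhi--Trung specialization theory (which underlies Lemma~\ref{specializ} itself) together with the $G_d$ hypothesis to guarantee finite colength over $S_{\mathfrak m S}$ is the right way to close it.
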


In the same spirit of the definitions given in \cite{PX1}, we say that an ideal $I$ has {\em almost almost minimal} $j$-multiplicity if $\lambda(\overline{I^2}/x_d\overline{I})\leq 2$, or equivalently, if $K\leq 3$.

Next we want to prove that the associated graded rings of $j$-stretched ideals 
having 
almost-almost minimal $j$-multiplicity (i.e. $K=3$) and 
small general Cohen-Macaulay type are almost Cohen-Macaulay. This provides a  higher dimensional version of results of \cite{RV3}. The first step in this direction consists in proving that $j$-stretched ideals of small general type satisfy the inclusion  $I^{K+1}\subseteq JI^2$. Recall the embedding codimension of $I$ is defined as 
$h(I)=\lambda(\overline{I}/\overline{I}^2)-\lambda(\overline{R}/\overline{I})$. 

\begin{Theorem}\label{SmallType}
Assume $R$ and $I$ satisfy Setting \ref{ass} (1) and (2). Let  $I$ be  $j$-stretched with $K=s(I)$. 
Let $J=(x_1, \ldots, x_d)$ be a general minimal reduction of $I$ and set $\overline{R}=R/J_{d-1}:I^{\infty}$, 
where $J_{d-1}=(x_1, \ldots, x_{d-1})$. 
 %and $h$ be as before $($see Section 3$)$. 
 If $\tau(I)< h(I)+1-\lambda(\overline{R}/\overline{I})$,  then
$$  \nu_2=K-2,\,\,\,  J\cap I^3=JI^2. $$
In particular, $I^{K+1}\subseteq JI^2$.
\end{Theorem}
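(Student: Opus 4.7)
The strategy is to work in the $1$-dimensional Cohen--Macaulay ring $\overline{R} = R/(J_{d-1} :_R I^\infty)$, where $\overline{I}$ is primary to $\overline{\m}$ and $\overline{J} = (\overline{x_d})$ is a principal minimal reduction, and to establish that $\overline{a^Kb}$ lies in $\overline{x_d}\cdot\overline{I^2} = \overline{JI^2}$. Combined with Proposition \ref{VV}(a) and the higher-level analog of the identity $(J_{d-1} :_R I^\infty) \cap I^n = J_{d-1}I^{n-1}$ (a generalization of \cite[Lemma~3.2]{PX1}), this will lift to $a^Kb \in JI^2$ in $R$, from which $J \cap I^3 = JI^2$, $\nu_2 = K - 2$, and $I^{K+1} \subseteq JI^2$ all follow.

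The first ingredient is a pair of length identities. The $j$-multiplicity formula yields $\lambda(\overline{I}/\overline{J}) = h + K - 1$, while the $j$-stretched Hilbert function of the Artinian quotient $\overline{R}/\overline{J}$ has the shape $\lambda(\overline{R}/\overline{I}), h, 1, 1, \dots, 1$ with $K - 1$ ones. In particular, the cyclic $\overline{R}/\overline{J}$-module $(\overline{ab}) + \overline{J}/\overline{J} = \overline{I^2} + \overline{J}/\overline{J}$ has length $K - 1$. Applying Lemma \ref{Properties}(d) to $\overline{I}$ together with the multiplication-by-$\overline{a}$ isomorphism $\overline{I}/((\overline{J} :_{\overline{R}} \overline{a}) \cap \overline{I}) \xrightarrow{\sim} (\overline{ab}) + \overline{J}/\overline{J}$ then yields $\lambda((\overline{J} :_{\overline{R}} \overline{a}) \cap \overline{I}/\overline{J}) = h$. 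Moreover, the short exact sequence
$$0 \to \overline{J} \cap \overline{I^2}/\overline{JI} \to \overline{I^2}/\overline{JI} \to \overline{I^2} + \overline{J}/\overline{J} \to 0$$
combined with the equality $\overline{\nu_1} = K - 1$ (from Corollary \ref{K} and Lemma \ref{non-inc}(b)) automatically forces $\overline{J} \cap \overline{I^2} = \overline{JI} = \overline{x_d}\,\overline{I}$.

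The type hypothesis is used to produce a nonzero kernel element of the multiplication map $\mu_{\overline{a}} : \overline{I^2}/\overline{JI} \to \overline{I^3}/\overline{JI^2}$. Comparing $\tau(I)$ to $\tau(\overline{I}) = \lambda((\overline{J} :_{\overline{R}} \overline{I}) \cap \overline{I}/\overline{J})$ via the natural map $(J :_R I) \cap I/J \to (\overline{J} :_{\overline{R}} \overline{I}) \cap \overline{I}/\overline{J}$ --- whose kernel and cokernel are controlled by the $AN^-_{d-2}$ condition together with the equality $J_{d-1} :_R I^\infty = J_{d-1} :_R I$ --- the hypothesis $\tau(I) < h + 1 - \lambda(\overline{R}/\overline{I})$ translates into the strict inequality $\tau(\overline{I}) \leq h - 1$. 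Since $\lambda((\overline{J} :_{\overline{R}} \overline{a}) \cap \overline{I}/\overline{J}) = h$ strictly exceeds $\tau(\overline{I})$, there exists $\overline{c} \in (\overline{J} :_{\overline{R}} \overline{a}) \cap \overline{I}$ with $\overline{c} \notin (\overline{J} :_{\overline{R}} \overline{I})$, and hence some $\overline{b'} \in \overline{I}$ with $\overline{c}\,\overline{b'} \notin \overline{J}$. Setting $\overline{d} = \overline{c}\,\overline{b'} \in \overline{I^2} \setminus \overline{JI}$ and using $\overline{J} \cap \overline{I^2} = \overline{x_d}\,\overline{I}$ to write $\overline{c}\,\overline{a} = \overline{x_d}\,\overline{s}$ with $\overline{s} \in \overline{I}$, we compute $\overline{d}\,\overline{a} = \overline{x_d}\,\overline{s}\,\overline{b'} \in \overline{x_d}\,\overline{I^2} = \overline{JI^2}$, displaying $\overline{d}$ as a nonzero element of $\ker(\mu_{\overline{a}})$.

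Consequently $\overline{\nu_2} \leq \overline{\nu_1} - 1 = K - 2$. Combined with the lower bound $\overline{\nu_2} \geq \lambda(\overline{I^3} + \overline{J}/\overline{J}) = K - 2$ coming from the surjection $\overline{I^3}/\overline{JI^2} \twoheadrightarrow \overline{I^3} + \overline{J}/\overline{J}$, we obtain $\overline{\nu_2} = K - 2$; the level-$3$ analog of the short exact sequence above then forces $\overline{J} \cap \overline{I^3} = \overline{JI^2}$, i.e., $\overline{a^Kb} \in \overline{JI^2}$. Lifting back to $R$ and invoking $I^{K+1} = JI^K + (a^Kb)$ from Lemma \ref{Properties}(b) delivers all three conclusions. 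The main obstacle in this plan is the type-comparison step, where the precise correction $\lambda(\overline{R}/\overline{I})$ appearing in the hypothesis must be extracted by careful analysis of the kernel and cokernel of the natural map between $\tau(I)$ (computed in $R$) and $\tau(\overline{I})$ (computed in $\overline{R}$), using the $AN^-_{d-2}$ property.
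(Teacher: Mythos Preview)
Your overall strategy in $\overline R$---find a nonzero kernel element of $\mu_{\overline a}$ using the type hypothesis, deduce $\overline{\nu_2}=K-2$ and $\overline J\cap \overline{I^3}=\overline{JI^2}$, then lift---is essentially the RV3 argument carried out one level down. Two genuine difficulties remain, and both are avoidable.

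First, the type-comparison step is not justified and may not be fixable as stated. You need $\tau(\overline I)<h$ but only know $\tau(I)<h+1-\lambda(\overline R/\overline I)$. The natural map $(J:_RI)\cap I/J \to (\overline J:_{\overline R}\overline I)\cap\overline I/\overline J$ is \emph{injective} (since $[J+(J_{d-1}:I^\infty)]\cap I=J$), so $\tau(I)\leq\tau(\overline I)$; nothing prevents $\tau(\overline I)$ from being strictly larger, and a cokernel bound of the required size would need control over $\{c\in I:cI\subseteq J+(J_{d-1}:I^\infty)\}$ that the assumptions do not supply. Second, your lifting step assumes $(J_{d-1}:I^\infty)\cap I^n=J_{d-1}I^{n-1}$ for $n\geq 3$. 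Under Setting~\ref{ass} only the case $n=2$ is available (see the proof of Lemma~\ref{Properties}); the higher cases typically require depth conditions on $R/I^j$ that are not assumed here.

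Both obstacles disappear if you run the RV3 argument \emph{directly in $R$}, as the paper intends. The key observation you are missing is that $J\cap I^2=JI$ already holds in $R$: Corollary~\ref{K} gives $\nu_1=\lambda(I^2/JI)=K-1$, while the filtration $I^2+J\supseteq I^3+J\supseteq\cdots\supseteq I^{K+1}+J=J$ together with Lemma~\ref{Properties}(b),(c) shows $\lambda(I^2+J/J)=K-1$, forcing $J\cap I^2=JI$. Then Lemma~\ref{Properties}(d) and multiplication by $a$ yield $\lambda\bigl((J:a)\cap I/J\bigr)=\lambda(I/J)-(K-1)=\lambda(I/I^2+J)=h$. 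Since $\tau(I)<h+1-\lambda(\overline R/\overline I)\leq h$, one finds $c\in (J:a)\cap I\setminus(J:I)$ and $b'\in I$ with $cb'\notin J$. Now $ca\in J\cap I^2=JI$ gives $(cb')a\in JI^2$, and $cb'\in I^2\setminus JI$ since $JI\subseteq J$. This produces the required kernel element in $R$ itself, and the remainder of your argument ($\nu_2=K-2$, $J\cap I^3=JI^2$, $I^{K+1}\subseteq JI^2$ via Proposition~\ref{VV}(b)) goes through unchanged.
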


\demo
Similar to the proof of \cite[Theorem~2.7]{RV3}.
\QED
\bigskip

The next theorem generalizes several classical results, see for instance \cite{S3}, \cite{RV2}, \cite{RV3}, \cite{RV} and \cite{PX1}.
\begin{Corollary}\label{almalm}
Assume $R$ and $I$ satisfy Setting \ref{ass} (1) and (2). Let  $I$ be $j$-stretched with $K=s(I)$ and let $h$ be the embedding codimension of $I$ as in Theorem \ref{SmallType}. If either $($i$)$ $K\leq 2$, or $($ii$)$ $K=3$ and $\tau(I)< h(I)+1-\lambda(\overline{R}/\overline{I})$, then $${\rm depth}({\rm gr}_I(R))\geq d-1.$$
\end{Corollary}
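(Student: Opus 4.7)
\demo
The plan is to reduce each case to the applicability of Corollary \ref{3}(b), which asserts that the inclusion $I^{K+1}\subseteq JI^{K-1}$ forces ${\rm depth}({\rm gr}_I(R))\geq d-1$. Thus it suffices to verify this inclusion in each of the two scenarios.

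First I would dispose of case (i). If $K=1$ then $I$ has minimal $j$-multiplicity, and by Corollary \ref{PX1} (equivalently, by Theorem \ref{CM}, since $r(I)\leq K=1$ forces $r(I)=K$) the associated graded ring ${\rm gr}_I(R)$ is even Cohen-Macaulay. If $K=2$ then $I$ has almost minimal $j$-multiplicity; by Corollary \ref{K} we have $\nu_1=\lambda(I^2/JI)=K-1=1$ and $I^2\not\subseteq J$, so by Proposition \ref{VV}(a) with $n=1$ together with the equality $I^2=JI+(ab)$ from Lemma \ref{Properties}(b), we get $J\cap I^2=JI$, hence $I^3\subseteq JI=JI^{K-1}$. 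In both situations Corollary \ref{3}(b) concludes ${\rm depth}({\rm gr}_I(R))\geq d-1$.

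Next I would handle case (ii). Since $K=3$ and $\tau(I)< h(I)+1-\lambda(\overline{R}/\overline{I})$, Theorem \ref{SmallType} applies and yields $I^{K+1}\subseteq JI^2=JI^{K-1}$. A second invocation of Corollary \ref{3}(b) then gives ${\rm depth}({\rm gr}_I(R))\geq d-1$, finishing the proof.

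Since every analytic input has already been proved in the preceding sections, there is no serious obstacle here; the argument is essentially a bookkeeping exercise that unifies the small-$K$ and small-type regimes under the common umbrella of Corollary \ref{3}. The only minor point requiring care is to verify, in the case $K=2$, that $J\cap I^2=JI$; this is where Proposition \ref{VV}(a) combined with Nakayama's lemma (applied to $(ab)\cap J\subseteq ab\,\m\subseteq I^3+JI\subseteq J\cap I^3+JI$, using that $I^3\subseteq J$) closes the gap.
\QED
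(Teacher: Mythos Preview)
Your proposal is correct and follows essentially the same route as the paper: the paper simply cites \cite{PX1} for the cases $K\leq 2$ and then, for $K=3$, invokes Theorem~\ref{SmallType} to obtain $I^4\subseteq JI^2$ and finishes with Corollary~\ref{3}. Your treatment of $K=2$ unpacks the content of Corollary~\ref{PX2}; note that the cleanest justification of $J\cap I^2=JI$ is the length argument already implicit in your line ``$\nu_1=1$ and $I^2\not\subseteq J$'' (since $JI\subseteq J\cap I^2\subsetneq I^2$ and $\lambda(I^2/JI)=1$), so the Nakayama detour in your final paragraph is unnecessary.
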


\demo  Since the cases $K=1,2$ have been proved in \cite{PX1}, we only need to prove the case $K=3$. By Theorem \ref{SmallType}, we have that $I^4\subseteq JI^2$. Corollary \ref{3} now finishes the proof.
\QED
\bigskip

We conclude this section with the example of an ideal $I$ having minimal $j$-multiplicity, not having $G_{d}$ condition and for which ${\rm gr}_I(R)$ is not Cohen-Macaulay. It demonstrates that the residual assumptions in our main Theorems are necessary.

\begin{Example} $($see \cite{CPUX} or \cite[3.10]{PX1}$)$
 Let $R=k\llbracket{x,y,z}\rrbracket/(x^3-x^2y)$ and $I=(xy^t,z)$ for any $t\ge 0$.  Then 
 $R$ is a two-dimensional  Cohen-Macaulay local ring, $\ell(I)=2$, and $I$ has reduction number zero. In particular $I$ has minimal $j$-multiplicity. However,  $I$ does not satisfy $G_2$, and ${\rm gr}_I(R)$ is not Cohen-Macaulay.
\end{Example}

\section{The $\m$-primary case}

In this  section we prove the non trivial fact that $j$-stretched ideals (strictly) generalize the stretched $\m$-primary ideals introduced by Sally and  Rossi-Valla. First, recall that an $\m$-primary ideal $I$ is said to be  {\em stretched} if there exists a minimal reduction $H$ of $I$ such that
\begin{itemize}
\item[(a)]  $H\cap I^2=HI$.
\item[(b)]  $HF_{I/H}(2)\leq 1$.
\end{itemize}

This definition, first given in \cite{RV3}, extends the classical concept of stretched Cohen-Macaulay local rings 
 given by Sally in \cite{S3}.
If $R$ is Cohen-Macaulay, stretched $\m$-primary ideals include ideals having minimal multiplicity (see for instance \cite{RV}).
However, there are $\m$-primary ideals with almost minimal multiplicity that are not stretched, even in $1$-dimensional Cohen-Macaulay local rings.
 In contrast, $j$-stretched ideals include ideals having minimal or  almost minimal multiplicity, because they include ideals of minimal and almost minimal $j$-multiplicity.
\medskip

We first prove that  general minimal reductions always achieve the minimal colength.
\begin{Proposition}\label{generic1}
Let $I$ be an ideal which has  $\ell(I)=d$ and the $G_d$ condition.  Let $H$ and $J$ be a minimal and a general minimal reduction of $I$, respectively.  Let  $n\geq 1$ be a fixed integer. Then the lengths $\lambda(I^n/J^n)$ and $\lambda(I^n/JI^{n-1}+I^{n+1})$ do not depend on $J$. Furthermore, if $R$ is  equicharacteristic, one has
\begin{itemize}
\item[(a)] $\lambda(I^n/J^n)\leq \lambda(I^n/H^n)$.
\item[(b)] $\lambda(I^n/JI^{n-1}+I^{n+1})\leq \lambda(I^n/HI^{n-1}+I^{n+1})$.
\end{itemize}
\end{Proposition}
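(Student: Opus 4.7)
The plan is to apply the Specialization Lemma (Lemma \ref{specializ}) to a ``universal'' minimal reduction of $I$. I will fix generators $I=(a_1,\ldots,a_s)$, introduce $ds$ indeterminates $\underline{z}=(z_{ij})_{1\leq i\leq d,\,1\leq j\leq s}$ over $R$, and set $S=R[\underline{z}]$. The generic elements $X_i:=\sum_{j=1}^s z_{ij}a_j\in IS$ assemble into a universal ideal $\mathcal{J}=(X_1,\ldots,X_d)S$. For a specialization $\underline{z}\mapsto \underline{\alpha}=(\lambda_{ij})\in R^{ds}$ one recovers $J_{\underline{\alpha}}=(x_1,\ldots,x_d)$ with $x_i=\sum_j\lambda_{ij}a_j$, and a ``general'' $\underline{\alpha}$ corresponds precisely to a general minimal reduction of $I$ in the sense of the paper.

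To treat the two lengths simultaneously, I will feed two distinct choices of $(M,M',N')$ into Lemma \ref{specializ}. For $\lambda(I^n/J^n)$, take $M=I^n$, $M'=M\otimes_R S=I^nS$, and $N'=\mathcal{J}^n\subseteq M'$; the specialization $N'_{\underline{\alpha}}=J_{\underline{\alpha}}^n$ gives $M/N'_{\underline{\alpha}}=I^n/J_{\underline{\alpha}}^n$. For $\lambda(I^n/JI^{n-1}+I^{n+1})$, take $M=I^n/I^{n+1}$, $M'=I^nS/I^{n+1}S$, and $N'$ equal to the image in $M'$ of $\mathcal{J}\cdot I^{n-1}S$; this time $M/N'_{\underline{\alpha}}=I^n/(J_{\underline{\alpha}}I^{n-1}+I^{n+1})$. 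The prerequisite of Lemma \ref{specializ}, namely that the generic length $\delta:=\lambda_{S_{\m S}}(M'_{\m S}/N'_{\m S})$ be finite, follows from $\ell(I)=d$ and the $G_d$ condition: for any prime $\mathfrak{q}\subsetneq \m S$ the contraction $\mathfrak{p}=\mathfrak{q}\cap R$ is a non-maximal prime of $R$, and either $I\not\subseteq\mathfrak{p}$ (in which case genericity of the $X_i$ forces $\mathcal{J}S_{\mathfrak{q}}=S_{\mathfrak{q}}$), or $\mathfrak{p}\in V(I)$ with $\dim R_{\mathfrak{p}}\leq d-1$ (in which case $G_d$ combined with genericity forces $\mathcal{J}S_{\mathfrak{q}}=IS_{\mathfrak{q}}$). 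Either way, $(M'/N')_{\mathfrak{q}}=0$, so the support of $M'/N'$ in $\mathrm{Spec}(S_{\m S})$ is concentrated at the maximal ideal and $\delta<\infty$.

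With the setup in place, Lemma \ref{specializ}(a) yields $\lambda_R(M/N'_{\underline{\alpha}})=\delta$ for every general $\underline{\alpha}$, establishing that both lengths are independent of the general minimal reduction $J$. For the inequalities in the equicharacteristic case, I will apply Lemma \ref{specializ}(b): given any fixed minimal reduction $H=(y_1,\ldots,y_d)$ of $I$, choose a specific $\underline{\alpha}_0\in R^{ds}$ expressing each $y_i$ as a combination of the $a_j$'s so that $J_{\underline{\alpha}_0}=H$; the lemma then returns $\lambda_R(I^n/J^n)=\delta\leq\lambda_R(I^n/H^n)$, and analogously for part (b). The main delicate step is the finiteness argument for $\delta$ sketched above, which rests essentially on the $G_d$ hypothesis; the remainder is direct bookkeeping against the Specialization Lemma.
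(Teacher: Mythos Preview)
Your proposal is correct and follows essentially the same route as the paper's own proof: both introduce the universal reduction $\mathcal{J}=(X_1,\ldots,X_d)$ in $S=R[\underline{z}]$, invoke the $G_d$ condition to obtain finite length of $I^nS_{\m S}/\mathcal{J}^nS_{\m S}$ (resp.\ of the analogous quotient for part (b)), and then apply the two parts of Lemma~\ref{specializ} with the specialization $\underline{\alpha_0}$ corresponding to $H$. Your write-up is in fact more explicit than the paper's, which simply asserts the finiteness from $G_d$ and handles (b) by ``similarly''; your sketch of the support argument for finiteness is appropriate, though in the case $I\not\subseteq\mathfrak{p}$ the phrase ``genericity of the $X_i$ forces $\mathcal{J}S_{\mathfrak q}=S_{\mathfrak q}$'' would benefit from one more line (e.g.\ noting that $\mu(IS_{\mathfrak q})=\mu(I_{\mathfrak p})\leq d$ and that the relevant minor of $(z_{ij})$ does not lie in $\mathfrak q\subseteq\m S$).
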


\demo Let $\m$ be the maximal ideal of $R$  and write $I=(a_1, \ldots, a_s)$.
To  prove assertion (a),   take $d \times s$ variables, say $\underline{z}=(z_{ij})$, and set $S=R[\underline{z}]$, $J^{\prime}=(x_1^{\prime}, \ldots x_{d}^{\prime})S$, where $x_i^{\prime}=\sum_{j=1}^s z_{ij}a_j$, $1\leq i \leq d$.  Let  $\underline{\alpha_0}\in R^{ds}$ be the vector such that $J^{\prime}_{\underline{\alpha_0}}=H$.
Since $I$ has the  $G_d$ condition, we have $\lambda_{S_{\m S}}(IS_{\m S}/J^{\prime}S_{\m S})<\infty$.
By Lemma \ref{specializ}, for a general element $\underline{\alpha}\in R^{ds}$, we have
$$\lambda(I^n/J^n)=\lambda(I^n/[(J^{\prime})^nS]_{\underline{\alpha}})=
\lambda_{S_{\m S}}(I^nS_{\m S}/(J^{\prime})^nS_{\m S}).$$
Furthermore, if $R$ is  equicharacteristic, we have $\lambda(I^n/J^n)\leq
\lambda(I^n/[(J^{\prime})^nS]_{\underline{\alpha_0}})=\lambda(I^n/H^n).$
Assertion (b) can be proved similarly.
\QED
\bigskip

We can then compare the lengths of quotients that are relevant for stretched ideals.
\begin{Proposition}\label{inters}
Let $(R, \m)$ be a $d$-dimensional equicharacteristic Cohen-Macaulay local ring  with infinite residue field. Let $I$ be an $\m$-primary ideal, and $H$  a minimal reduction of $I$. Then for a general minimal reduction $J$ of $I$, one has
$$\lambda((J \cap I^2)/JI)\leq \lambda((H\cap I^2 )/HI).$$
In particular,\, if $H\cap I^2 =HI$ then one has $J \cap I^2 =JI$.
\end{Proposition}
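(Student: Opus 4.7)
The plan is to reduce the desired inequality $\lambda((J\cap I^2)/JI)\leq \lambda((H\cap I^2)/HI)$ to a colength comparison of the form $\lambda(R/(J+I^2))\leq \lambda(R/(H+I^2))$, and then to invoke the equicharacteristic part of the Specialization Lemma, Lemma \ref{specializ}(b), in the spirit of Proposition \ref{generic1}. The key algebraic input enabling this reduction is that, since $R$ is Cohen-Macaulay and $I$ is $\m$-primary, any minimal reduction of $I$ is itself $\m$-primary with exactly $d$ generators, hence generated by a system of parameters that forms a regular sequence. A standard Koszul argument then gives the identifications $J/JI \cong (R/I)^d$ and $H/HI \cong (R/I)^d$ as $R$-modules, so that in particular $\lambda(J/JI) = \lambda(H/HI) = d\lambda(R/I)$ is independent of the chosen minimal reduction.

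With these identifications in place, the short exact sequence
\[
0\longrightarrow (J\cap I^2)/JI \longrightarrow J/JI \longrightarrow (J+I^2)/I^2 \longrightarrow 0,
\]
induced by the natural map $x\mapsto x+I^2$, yields $\lambda((J\cap I^2)/JI) = d\lambda(R/I) - \lambda((J+I^2)/I^2)$, and an identical formula for $H$. Subtracting, the target inequality becomes $\lambda((J+I^2)/I^2)\geq \lambda((H+I^2)/I^2)$; taking complementary lengths inside the finite-length module $R/I^2$, this is equivalent to
\[
\lambda(R/(J+I^2))\leq \lambda(R/(H+I^2)).
\]

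To obtain this final inequality, I would apply Lemma \ref{specializ}(b). Write $I=(a_1,\dots,a_s)$, introduce $ds$ new variables $\underline{z}=(z_{ij})$, set $S=R[\underline{z}]$, $x_i'=\sum_{j=1}^s z_{ij}a_j$, and $J'=(x_1',\dots,x_d')S$; fix $\underline{\alpha_0}\in R^{ds}$ with $J'_{\underline{\alpha_0}}=H$. Apply the Specialization Lemma to $M=R/I^2$ (finite over $R$, since $I$ is $\m$-primary), $M'=S/I^2S$, and $N'=(J'+I^2S)/I^2S\subseteq M'$. The finite-length hypothesis $\lambda_{S_{\m S}}(M'_{\m S}/N'_{\m S})<\infty$ holds because $(\m S)^{2N}\subseteq I^2S\subseteq (J'+I^2)S$ for $N\gg 0$. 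For a general $\underline{\alpha}\in R^{ds}$ the corresponding $J_{\underline{\alpha}}$ is a minimal reduction of $I$ and $M/N'_{\underline{\alpha}}$ is precisely $R/(J_{\underline{\alpha}}+I^2)$, so the equicharacteristic conclusion of Lemma \ref{specializ}(b) gives exactly $\lambda(R/(J_{\underline{\alpha}}+I^2))\leq \lambda(R/(H+I^2))$, as required. The ``In particular'' clause is immediate: if $H\cap I^2=HI$ then the right-hand side of the main inequality is zero, forcing $J\cap I^2=JI$.

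The only real obstacle is the bookkeeping step that converts the intersection $(J\cap I^2)/JI$ into a quotient of the form $R/(J+I^2)$ to which Lemma \ref{specializ} applies directly. The regular-sequence identification $J/JI\cong (R/I)^d$ is what makes this subtraction trick work, and it is the sole place where the Cohen-Macaulay and $\m$-primary hypotheses are essentially used: they ensure the ambient length $\lambda(J/JI)$ is a constant depending only on $I$, so that the difference of the two sides of the desired inequality collapses to a colength comparison in the single module $R/I^2$, which is precisely the setting of the Specialization Lemma.
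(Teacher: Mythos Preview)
Your proof is correct and follows essentially the same strategy as the paper: both reduce the inequality to $\lambda(R/(J+I^2))\leq \lambda(R/(H+I^2))$ and then invoke the equicharacteristic Specialization Lemma. The only cosmetic difference is that the paper decomposes through $I^2/LI$ (using $\lambda(I^2/JI)=\lambda(I^2/HI)$, cited from \cite{RV}, together with $\lambda(R/J)=\lambda(R/H)=e(I)$) rather than through $J/JI$; your Koszul argument that $\lambda(J/JI)=d\,\lambda(R/I)$ is a clean, self-contained way to reach the same reduction.
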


\demo For any ideal $L\subseteq I$, we have
$\lambda((L \cap I^2 )/LI)=\lambda(I^2/LI)-\lambda(I^2/L\cap I^2)$, and
$\lambda(I^2/L\cap I^2 )=\lambda(R/L)-\lambda(R/I^2+L)$, so that we obtain
\begin{equation}\label{intersez}
\lambda((L\cap I^2 )/LI)=\lambda(I^2/LI)-\lambda(R/L)+\lambda(R/(I^2+L)).
\end{equation}
Observe that \begin{itemize}
\item $\lambda(I^2/JI)=\lambda(I^2/HI)$ (see, for instance, \cite[Corollary~2.1]{RV}).
\item $\lambda(R/J)=e(R)=\lambda(R/H)$, because $J$ and $H$ are minimal reductions of $I$.
\item $\lambda(R/I^2+J)\leq \lambda(R/I^2+H)$, by Lemma \ref{specializ}.
\end{itemize}
Together with equation (\ref{intersez}), the above gives $\lambda((J\cap I^2)/JI)\leq \lambda((H\cap I^2)/HI).$
\QED
\bigskip

We now prove the main result of this section. It shows that, in certain situations, $j$-stretchedness can be checked by using a {\em special} minimal reduction (instead of  every {\it general} minimal reduction). In particular, it gives a concrete criterion to construct examples of $j$-stretched ideals.

\begin{Theorem}\label{AN}
Let $(R, \m)$ be a $d$-dimensional equicharacteristic Cohen-Macaulay  local ring  with infinite residue field.   let $I$ be an ideal with $\ell(I)=d$. Let $H=(y_1,\dots,y_d)$ be a minimal reduction of $I$. Set $H_{d-1}=(y_1,\ldots,y_{d-1})$ and assume
$$\lambda(I^2/[y_dI + I^3 + (H_{d-1}: I^{\infty}) \cap I^2])\leq 1.$$
If one of the  following two conditions holds,
\begin{itemize}
\item[(i)]  $I$ is $\m$-primary  and\, $H\cap I^2=HI,$
\item[(ii)] ${\rm depth}\,(R/I)\geq 1$,  $I$ has properties $G_d$ and $AN^-_{d-2}$, and $H_{d-1}: I$ is a geometric $d-1$-residual intersection of $I$,
\end{itemize}
then $I$ is $j$-stretched.
\end{Theorem}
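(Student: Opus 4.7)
The plan is to reduce both cases to a clean comparison of the lengths of $I^2/(JI+I^3)$ for a general minimal reduction $J$ and of $I^2/(HI+I^3)$ for the given $H$, and then invoke the Specialization Lemma \ref{specializ}(b) to transfer the bound. Observe that the definition of $j$-stretchedness asks for $\lambda\bigl(I^2/[x_dI + I^3 + (J_{d-1}:I^\infty)\cap I^2]\bigr)\leq 1$, and, since $J_{d-1}I$ is trivially contained in $(J_{d-1}:I^\infty)\cap I^2$, it suffices to show $\lambda(I^2/(JI+I^3))\leq 1$ for a general $J$.

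First I would rewrite the hypothesis as $\lambda(I^2/(HI+I^3))\leq 1$ by establishing the identity $(H_{d-1}:I^\infty)\cap I^2 = H_{d-1}I$. In case (i), since $R$ is Cohen-Macaulay and $I$ is $\m$-primary, $y_1,\ldots,y_{d-1}$ is a regular sequence (being part of the system of parameters $y_1,\ldots,y_d$ coming from the minimal reduction $H$), so $H_{d-1}$ is unmixed of height $d-1$. Because $\sqrt{I}=\m$ has height $d$, no associated prime of $H_{d-1}$ contains $I$, whence $H_{d-1}:I^\infty = H_{d-1}$. Combined with $H\cap I^2 = HI$ and the Cohen-Macaulay identity $H_{d-1}:y_d = H_{d-1}$ (exactly as in the computation in the proof of Lemma \ref{Properties}(b)), this yields $H_{d-1}\cap I^2 = H_{d-1}I$. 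In case (ii), the assumption that $H_{d-1}:I$ is a geometric $(d-1)$-residual intersection, together with $G_d$, $AN^-_{d-2}$ and $\mathrm{depth}(R/I)\geq 1$, gives both $H_{d-1}:I^\infty = H_{d-1}:I$ and $(H_{d-1}:I)\cap I^2 = H_{d-1}I$ via standard residual intersection theory (cf.\ \cite[Lemma 3.2]{PX1} and \cite[Lemmas 2.3, 2.4]{JU}).

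For the final step I would apply Lemma \ref{specializ}(b). Writing $I = (a_1,\ldots,a_s)$, introduce $ds$ variables $\underline{z}=(z_{ij})$ over $R$, set $S = R[\underline{z}]$ and $x_i' = \sum_j z_{ij} a_j \in S$, and take $M = I^2$, $N' = \sum_{i=1}^d x_i'\, IS + I^3 S \subseteq I^2 S = M\otimes_R S$. Choose $\underline{\alpha_0}\in R^{ds}$ so that $(x_1'(\underline{\alpha_0}),\ldots,x_d'(\underline{\alpha_0})) = (y_1,\ldots,y_d)$; then $N'_{\underline{\alpha_0}} = HI + I^3$. Since, by the previous step, $\lambda(I^2/N'_{\underline{\alpha_0}})\leq 1 <\infty$, Lemma \ref{specializ}(b) (applicable because $R$ is equicharacteristic) gives that for a general $\underline{\alpha}\in R^{ds}$, with corresponding $J = J'_{\underline{\alpha}}$,
\[
\lambda(I^2/(JI+I^3)) \;=\; \lambda(I^2/N'_{\underline{\alpha}}) \;\leq\; \lambda(I^2/N'_{\underline{\alpha_0}}) \;=\; \lambda(I^2/(HI+I^3)) \;\leq\; 1,
\]
which, together with the opening reduction, proves that $I$ is $j$-stretched.

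The delicate part of this plan is the first step: verifying the identity $(H_{d-1}:I^\infty)\cap I^2 = H_{d-1}I$ in case (ii), as it uses the full strength of the residual intersection hypotheses. Once this identification is in place, the rest reduces to a clean application of the Specialization Lemma, whose equicharacteristic upper-semicontinuity of length is precisely what allows us to pass from the one specific minimal reduction $H$ in the hypothesis to a general minimal reduction $J$ in the definition of $j$-stretchedness.
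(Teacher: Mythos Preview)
Your proposal is correct and follows essentially the same approach as the paper: first establish $(H_{d-1}:I^\infty)\cap I^2 = H_{d-1}I$ in each case (via the regular-sequence computation in case (i) and residual intersection theory in case (ii)) to rewrite the hypothesis as $\lambda(I^2/(HI+I^3))\leq 1$, then invoke the Specialization Lemma to pass from the fixed $H$ to a general $J$. The only minor point to tighten is that Lemma~\ref{specializ} is stated under the hypothesis $\delta = \lambda_{S_{\m S}}(M'_{\m S}/N'_{\m S}) < \infty$, not merely $\lambda(M/N'_{\underline{\alpha_0}})<\infty$; this finiteness holds here because in case (i) $I$ is $\m$-primary and in case (ii) the $G_d$ condition forces $(I^2/(J'I+I^3))S_{\m S}$ to be supported only at $\m S$ (exactly as in the proof of Proposition~\ref{generic1}).
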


\demo It suffices to show that either (i) or (ii) implies the equality $$y_dI + I^3 + (H_{d-1}: I^{\infty}) \cap I^2=HI+ I^3.$$ Note that, to prove the above, one does not need the inequality $\lambda(I^2/[y_dI + I^3 + (H_{d-1}: I^{\infty}) \cap I^2])\leq 1$.

First assume (i) holds. Since $R$ is Cohen-Macaulay, then $I$ contains a non zero divisor on $R/H_{d-1}$, whence
$H_{d-1}: I^{\infty} = H_{d-1}$. One then has
$$(H_{d-1}: I^{\infty}) \cap I^2 = H_{d-1} \cap I^2 = H_{d-1} \cap H \cap I^2 = H_{d-1} \cap HI.$$
Now, we have
$$
\begin{array}{lll}
H_{d-1} \cap HI & = H_{d-1} \cap (H_{d-1}I + y_dI) & = H_{d-1}I + (H_{d-1} \cap y_dI)\\
                                   & = H_{d-1}I + y_d(H_{d-1} :_{I} y_d) & = H_{d-1}I  + y_dH_{d-1}\\
                                   & = H_{d-1}I,&
\end{array}
$$
showing that $(H_{d-1}: I^{\infty}) \cap I^2 = H_{d-1}I$, which immediately implies $y_dI + I^3 + (H_{d-1}: I^{\infty}) \cap I^2=HI+ I^3.$

Next, assume (ii) holds. Since $H_{d-1}: I$ is a geometric $d-1$-residual intersection of $I$ and $I$ satisfies $AN^-_{d-2}$, we have $H_{d-1}:I^{\infty}=H_{d-1}:I$. This time the equality $(H_{d-1}: I^{\infty}) \cap I^2 = H_{d-1}I$ follows by  an argument similar to \cite[Lemma~3.2]{PX1}.

Then, in either case, one has  $\lambda(I^2/HI + I^3)\leq 1$. By Lemma \ref{specializ}, this implies $\lambda(I^2/JI + I^3)\leq 1$ for a general minimal reduction $J$ of $I$, showing that $I$ is $j$-stretched.
\QED
\bigskip

As a consequence, we immediately obtain that  every stretched $\m$-primary ideal is $j$-stretched.
\begin{Corollary}\label{mprim}
Let $(R,\m)$ be an equicharacteristic Cohen-Macaulay local ring with infinite residue field and $I$ an $R$-ideal. If $I$ is a stretched $\m$-primary ideal then $I$ is a $j$-stretched ideal.
\end{Corollary}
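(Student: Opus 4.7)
The plan is to deduce Corollary \ref{mprim} as an immediate consequence of Theorem \ref{AN} applied under condition (i). The key is to recognize that the two defining properties of a stretched $\m$-primary ideal—namely $H \cap I^2 = HI$ and $HF_{I/H}(2) \leq 1$—translate precisely into the hypotheses of Theorem \ref{AN} (i).

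First, I would unravel the Hilbert function condition. By definition,
$$HF_{I/H}(2) = \lambda\bigl(I^2/(I^3 + H \cap I^2)\bigr),$$
so the stretchedness assumption $H \cap I^2 = HI$ rewrites this as $\lambda(I^2/(HI + I^3)) \leq 1$.

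Second, I would verify the main length hypothesis of Theorem \ref{AN} by showing that
$$y_d I + I^3 + (H_{d-1} : I^{\infty}) \cap I^2 = HI + I^3.$$
Since $R$ is Cohen-Macaulay and $I$ is $\m$-primary, $I$ contains a non zero divisor on $R/H_{d-1}$, so $H_{d-1} : I^{\infty} = H_{d-1}$. Then, using $H_{d-1} \subseteq H$ together with the stretchedness assumption $H \cap I^2 = HI$, I get
$$(H_{d-1}:I^{\infty}) \cap I^2 = H_{d-1} \cap I^2 = H_{d-1} \cap H \cap I^2 = H_{d-1} \cap HI.$$
The standard Cohen-Macaulay colon manipulation (as in the proof of Theorem \ref{AN}) gives $H_{d-1} \cap HI = H_{d-1}I + y_d(H_{d-1} :_I y_d) = H_{d-1}I$, whence the required equality follows. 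Combining with the first step yields
$$\lambda\bigl(I^2 / [y_d I + I^3 + (H_{d-1}:I^{\infty}) \cap I^2]\bigr) = \lambda(I^2/(HI + I^3)) \leq 1.$$

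Third, I would invoke Theorem \ref{AN}: both the length hypothesis and condition (i) hold, so $I$ is $j$-stretched. There is no real obstacle; the corollary is a straightforward specialization of the more general Theorem \ref{AN}, and the only point of substance—that the auxiliary term $(H_{d-1}:I^{\infty}) \cap I^2$ collapses to $H_{d-1}I$—is handled by a standard Cohen-Macaulay colon computation already carried out in the proof of Theorem \ref{AN}.
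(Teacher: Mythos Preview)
Your proposal is correct and follows essentially the same route as the paper: both verify condition (i) of Theorem \ref{AN} using $H\cap I^2=HI$, invoke the computation from the proof of Theorem \ref{AN} to reduce the length hypothesis to $\lambda(I^2/(HI+I^3))\leq 1$, and then identify this quantity with $HF_{I/H}(2)$ via the stretchedness assumption. The only difference is that you spell out the colon manipulation explicitly, whereas the paper simply refers back to the proof of Theorem \ref{AN}.
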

\begin{proof}
By the first half of the definition of stretched $\m$-primary ideals $I$, these ideals satisfy assumptions (i) of Theorem \ref{AN}. As in the proof of Theorem \ref{AN}, we then have the equality
$$[y_dI + I^3 + (H_{d-1}: I^{\infty}) \cap I^2]=HI+I^3$$
In particular, we have $\lambda(I^2/[y_dI + I^3 + (H_{d-1}: I^{\infty}) \cap I^2])=\lambda(I^2/HI + I^3)=\lambda(I^2/H\cap I^2 + I^3)=HF_{I/H}(2)$.
By assumption of stretchedness, this length is at most $1$, proving that stretched ideals satisfy the inequality required in Theorem \ref{AN}. We can then apply Theorem \ref{AN} to conclude that $I$ is $j$-stretched.
\end{proof}

One may wonder if, in the $\m$-primary case, $j$-stretchedness coincides with stretchedness. In general, this is not the case. For instance, the ideal $I=(t^3,t^4)$ in the ring $A=k[\![t^3,t^{4},t^{5}]\!]$
  is $j$-stretched, has almost minimal multiplicity, but is not stretched. 
 This shows that $j$-stretched $\m$-primary ideals strictly generalize classical stretched ideals,
  even  in the $1$-dimensional case.

In contrast, we now provide a condition ensuring that  stretchedness coincides with $j$-stretchedness.  
%{\it if} the associated graded ring of the $\m$-primary ideal has good intersection properties.
\begin{Proposition}\label{equiv}
Let $(R, \m)$ be an equicharacteristic Cohen-Macaulay local ring and $I$ an $\m$-primary ideal. Assume $I^2\cap H=HI$ for a minimal reduction $H$ of $I$. Then $I$ is stretched if and only if $I$ is $j$-stretched.
\end{Proposition}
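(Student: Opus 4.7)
The proof plan is to show both directions by reducing the two notions to the same numerical inequality, namely $\lambda(I^2/JI+I^3)\leq 1$ for a general minimal reduction $J$.

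The forward direction ($\Longrightarrow$) is essentially immediate from Corollary \ref{mprim}, which already shows that every stretched $\m$-primary ideal is $j$-stretched. Since the hypothesis of Proposition \ref{equiv} assumes the stretched property, we may simply invoke this corollary; no additional work is required. Note that the hypothesis $I^2\cap H=HI$ of the proposition matches the first condition of stretchedness automatically.

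For the backward direction ($\Longleftarrow$), the plan is to promote the property $H\cap I^2=HI$ (which holds for the given specific reduction $H$) to general minimal reductions, and then combine this with $j$-stretchedness to produce a reduction witnessing the stretched condition. First, by Proposition \ref{inters}, the assumption $H\cap I^2=HI$ implies that $J\cap I^2=JI$ for any general minimal reduction $J=(x_1,\dots,x_d)$ of $I$. Next, since $I$ is $\m$-primary and $R$ is Cohen-Macaulay, $J_{d-1}:I^\infty = J_{d-1}$, so the equality $J\cap I^2=JI$ propagates, via the same computation used in the proof of Lemma \ref{Properties}(b), to $J_{d-1}\cap I^2 = J_{d-1}I$. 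Consequently, one can identify
\[
\lambda\bigl(\overline{I^2}/x_d\overline{I}+\overline{I^3}\bigr)
=\lambda\bigl(I^2/(J_{d-1}\cap I^2+x_dI+I^3)\bigr)
=\lambda\bigl(I^2/JI+I^3\bigr),
\]
so the $j$-stretched hypothesis becomes $\lambda(I^2/JI+I^3)\leq 1$. Finally, using $J\cap I^2=JI$ one more time,
\[
HF_{I/J}(2)=\lambda\bigl(I^2/(J\cap I^2+I^3)\bigr)=\lambda\bigl(I^2/JI+I^3\bigr)\leq 1,
\]
so $J$ itself witnesses both conditions (a) and (b) of stretchedness. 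Hence $I$ is stretched.

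The only potential subtlety is verifying the length identification in the display above, i.e.\ checking that the $j$-stretched condition for $\m$-primary $I$ with $H\cap I^2=HI$ really reduces to $\lambda(I^2/JI+I^3)\leq 1$. This is precisely the computation $J_{d-1}\cap I^2=J_{d-1}I$ carried out in the proof of Lemma \ref{Properties}(b), which itself requires only $J\cap I^2=JI$ together with the Cohen-Macaulay hypothesis; both are available here (the former via Proposition \ref{inters} applied to $H$). Thus no serious obstacle arises, and the proof collapses to citing Corollary \ref{mprim}, Proposition \ref{inters}, and the standard residual computation from Lemma \ref{Properties}.
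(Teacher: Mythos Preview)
Your proof is correct and follows essentially the same approach as the paper: invoke Corollary~\ref{mprim} for the forward direction, and for the converse use Proposition~\ref{inters} to get $J\cap I^2=JI$ for a general $J$, then identify the $j$-stretched length with $\lambda(I^2/JI+I^3)=HF_{I/J}(2)$. The only difference is cosmetic: where you appeal to the computation in Lemma~\ref{Properties}(b) to prove $J_{d-1}\cap I^2=J_{d-1}I$, the paper simply observes that $J_{d-1}\cap I^2\subseteq J\cap I^2=JI$, so the extra term $J_{d-1}\cap I^2$ is already absorbed by $JI$ and no separate computation is needed.
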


\demo  By Corollary \ref{mprim} we only need to show that, if $I$ is $j$-stretched, then $I$ is stretched. Let $J=(x_1,\dots,x_d)$ be a general minimal reduction of $I$ and $J_{d-1}=(x_1,\dots,x_{d-1})$. 
By $j$-stretchedness we have
 $\lambda(\overline{I}^2/(x_d\overline{I}+\overline{I}^3))\leq 1$,  where $\overline{R}=R/(J_{d-1}:I^{\infty})=R/J_{d-1}$.
By Proposition \ref{inters}, one obtains $J\cap I^2=JI$. Hence, we get
$$\overline{I}^2/(x_d\overline{I}+\overline{I}^3)\cong I^2/(JI+I^3+J_{d-1}\cap I^2)= I^2/(JI+I^3)=I^2/((J\cap I^2)+I^3).$$
Therefore, for a general minimal reduction $J$ of $I$, one has $HF_{I/J}(2)\leq 1.$ This fact, together with $J\cap I^2=JI,$ proves the stretchedness of $I$.
\QED
\bigskip

We conclude this section with an application of the above results to answer a question of Sally. If $I$ is $\m$-primary, classical examples by Sally and Rossi-Valla show that $I$ can be stretched with respect to a minimal reduction $J_1$ but not stretched with respect to a different minimal reduction $J_2$. Hence it is well known that the stretchedness property depends upon the minimal reduction. Sally \cite{S3} raised the following question: {\em To what extent does the concept of ``stretchedness'' depend upon the choice of the minimal reduction?}
We are now able to answer this question.
\begin{Corollary}\label{Sally}
Let $(R,\m)$ be an equicharacteristic Cohen-Macaulay local ring and $I$ an $\m$-primary ideal. If $I$ is stretched with respect to a minimal reduction $H$, then $I$ is stretched with respect to any general minimal reduction.
\end{Corollary}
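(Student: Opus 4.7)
The plan is a short chain that assembles three earlier results of Section~5. First, since $I$ is stretched with respect to $H$, Corollary~\ref{mprim} immediately gives that $I$ is $j$-stretched (as an intrinsic property of $I$, independent of any minimal reduction). Second, the hypothesis hands us a minimal reduction $H$ with $H\cap I^2=HI$, so Proposition~\ref{inters} promotes this to the analogous equality $J\cap I^2=JI$ for \emph{every} general minimal reduction $J$ of $I$. This already verifies condition~(a) in the definition of stretchedness with respect to $J$.

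It remains to verify condition~(b), namely $HF_{I/J}(2)\leq 1$, for a general minimal reduction $J$. I would reproduce the short computation from the proof of Proposition~\ref{equiv}: write $J=(x_1,\dots,x_d)$ and $J_{d-1}=(x_1,\dots,x_{d-1})$. Because $R$ is Cohen--Macaulay and $I$ is $\m$-primary, $I$ contains a nonzerodivisor on $R/J_{d-1}$, hence $J_{d-1}:I^{\infty}=J_{d-1}$ and $\overline{R}=R/J_{d-1}$. A brief colon-ideal argument (of the same kind used in Theorem~\ref{AN}, exploiting that $x_d$ is regular modulo $J_{d-1}$ and that $J\cap I^2=JI$) yields $J_{d-1}\cap I^2=J_{d-1}I$. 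Combining these gives the identification
$$\lambda\bigl(\overline{I^2}/(x_d\overline{I}+\overline{I^3})\bigr)=\lambda\bigl(I^2/(JI+I^3)\bigr)=\lambda\bigl(I^2/((J\cap I^2)+I^3)\bigr)=HF_{I/J}(2).$$
The left-hand side is at most $1$ by the $j$-stretchedness of $I$, which is exactly condition~(b) for $J$.

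Honestly, there is no real obstacle here; the corollary is essentially a repackaging. The only observation needed is that, although Proposition~\ref{equiv} is stated in terms of the intrinsic property ``$I$ is stretched'', its proof actually \emph{produces} a witnessing minimal reduction which is an arbitrary general one. Once that is noted, chaining Corollary~\ref{mprim}, Proposition~\ref{inters}, and the internal computation of Proposition~\ref{equiv} gives the conclusion that stretchedness is witnessed by every general minimal reduction, which answers Sally's question.
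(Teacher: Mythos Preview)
Your argument is correct, but it takes a small detour that the paper's own proof avoids. Both proofs obtain condition~(a) for a general $J$ via Proposition~\ref{inters}. For condition~(b), however, the paper bypasses $j$-stretchedness entirely and simply invokes Proposition~\ref{generic1}(b) to get
\[
\lambda\bigl(I^2/(J\cap I^2)+I^3\bigr)=\lambda(I^2/JI+I^3)\leq \lambda(I^2/HI+I^3)=\lambda\bigl(I^2/(H\cap I^2)+I^3\bigr)\leq 1,
\]
which is a one-line chain once $J\cap I^2=JI$ is in hand. Your route instead passes through Corollary~\ref{mprim} to obtain $j$-stretchedness and then reruns the computation inside Proposition~\ref{equiv} to translate that back into $HF_{I/J}(2)\leq 1$. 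This is logically fine (and your observation that the proof of Proposition~\ref{equiv} actually produces a \emph{general} $J$ as the witness is exactly right), but it is more circuitous: Corollary~\ref{mprim} already hides an appeal to the Specialization Lemma inside Theorem~\ref{AN}, so you are in effect invoking the same underlying inequality twice. Incidentally, the ``brief colon-ideal argument'' you mention for $J_{d-1}\cap I^2=J_{d-1}I$ is not even needed at that point: once $J\cap I^2=JI$ is known, the inclusion $J_{d-1}\cap I^2\subseteq J\cap I^2=JI$ already suffices for the displayed identification.
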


\demo Let $J$ be a general minimal reduction of $I$. By Proposition \ref{inters}, the ``intersection property'' $J\cap I^2=JI$ follows at once from $H\cap I^2=HI$.
Then we only need to show that $\lambda(I^2/(J\cap I^2) + I^3)\leq 1$. By Proposition \ref{generic1} we have $\lambda(I^2/JI+I^3)\leq \lambda(I^2/HI+I^3)$. We have then obtained the following chain of inequalities
$$\lambda(I^2/(J\cap I^2) + I^3)=\lambda(I^2/JI+I^3)\leq \lambda(I^2/HI+I^3)=\lambda(I^2/(H\cap I^2)+I^3)\leq 1.$$
\QED
\bigskip

The examples of Sally and Rossi-Valla show that $I$ is stretched with respect to a minimal reduction does not imply that $I$ is stretched with respect to every minimal reduction of $I$. However, Corollary \ref{Sally} proves that the next best possible scenario holds, that is, $I$ is stretched with respect to a  Zariski dense open subset of minimal reductions of $I$.
\medskip

{\bf Acknowledgments.} We would like to thank Ulrich and Polini for several insightful remarks and suggestions on the material of this paper.


\begin{thebibliography}{99}


\bibitem{AGH}{ I. Aberbach, L. Ghezzi and T. H\'a, The depth of the associated graded ring of ideals with any reduction number, J. Algebra {\bf 276} (2004), no. 1, 168--179. }

\bibitem{AB}{S. Abhyankar, Local rings of high embedding dimension,
Amer. J. Math. {\bf 89} (1967), 1073--1077. }

\bibitem{AM}{ R. Achilles and M. Manaresi, Multiplicity for ideals of maximal analytic spread and intersection theory,
J. Math. Kyoto Univ. {\bf 33-4} (1993), 1029--1046. }

\bibitem{CPUX}{G. Colom$\acute{{\rm e}}$-Nin, C. Polini, B. Ulrich, and Y. Xie, Generalized Hilbert coefficients and normalization of ideals, in preparation.}


\bibitem{F}{ L. Fouli, A study on the core of ideals,  Ph.D. thesis, Purdue University, 2006.}

\bibitem{Gh}{ L. Ghezzi, On the depth of the associated graded ring of an ideal, J. Algebra {\bf 248} (2002), 688--707.}

\bibitem{GH}{ S. Goto and S. Huckaba, On graded rings associated to analytic deviation one ideals, Amer. J. Math.
{\bf 116} (1994), 905--919.}

\bibitem{GY1}{S. Goto and Y. Nakamura, On the Gorensteinness of graded rings associated to ideals of analytic deviation one, Contemp. Math. {\bf 159} (1994), 51--72.}

\bibitem{GYN}{ S. Goto, Y. Nakamura and K. Nishida, Cohen-Macaulay graded rings associated to ideals, Amer. J.
Math. {\bf 118} (1996), 1197--1213.}

\bibitem{M2} { D. R. Grayson and M. E. Stillman,  Macaulay2, a software system for research
                   in algebraic geometry, Available at http://www.math.uiuc.edu/Macaulay2.}

\bibitem{HH1}{ S. Huckaba and C. Huneke, Powers of ideals having small analytic deviation, Amer. J. Math. {\bf 114} (1992),
367-403.}

\bibitem{Hu}{ C. Huneke, Linkage and Koszul homology of ideals, Amer. J. Math. {\bf 104} (1982), 1043--1062.}

\bibitem{Hu2}{ C. Huneke, Strongly Cohen-Macaulay schemes and residual intersections, Trans. Amer. Math. Soc. {\bf 277} (1983), 739--763.}

\bibitem{JU}{ M. Johnson and B. Ulrich, Artin--Nagata properties and
Cohen--Macaulay associated graded rings, Compositio Math. {\bf
103} (1996), 7--29. }

\bibitem{JK}{ B. Johnston and D. Katz, Castelnuovo regularity and graded rings associated to an ideal, Proc. Amer. Math. Soc. {\bf
123} (1995), 727--734. }


\bibitem{NT}{ D. V.  Nhi and N. V. Trung, Specialization of modules, Comm. Algebra  {\bf 27} (1999), 2959--2978.}

\bibitem{NU}{ K. Nishida and B. Ulrich, Computing $j$-multiplicities, J. Pure Appl. Algebra {\bf 214} (2010), 2101--2110.}

\bibitem{PX1}{ C. Polini and Y. Xie, $j$-multiplicity and depth of associated graded modules, J. Algebra {\bf 379} (2013),  31--49.}

\bibitem{PX2}{ C. Polini and Y. Xie, Generalized Hilbert functions, to appera in Comm. in Algebra.}


\bibitem{R1}{ M. E. Rossi, Primary ideals with good associated graded
rings, J. Pure Appl. Algebra {\bf 145} (2000), 75--90.}


\bibitem{RV1}{ M. E. Rossi and G. Valla, A conjecture of J. Sally, Comm. in Algebra
{\bf 24 (13)} (1996),  4249--4261.}

\bibitem{RV2}{ M. E. Rossi and G. Valla, Cohen-Macaulay local rings of embedding dimension $e+d-3$, J. London Math. Soc. {\bf 80} (2000), 107--126.}

\bibitem{RV3}{ M. E. Rossi and G. Valla, Stretched $\m$-primary ideals,  Beitr$\ddot{{\rm a}}$ge Algebra Geom. {\bf 42 } (2001), 103--122.}


\bibitem{RV}{M. E. Rossi and G. Valla, Hilbert functions of filtered modules, Lecture Notes of the Unione Matematica Italiana {\bf 9}, Springer-Verlag, Berlin, UMI, Bologna, 2010.}


\bibitem{S1}{ J. Sally, On the associated graded ring of a local Cohen-Macaulay ring,
J. Math. Kyoto Univ. {\bf 17} (1977),   19--21.}

\bibitem{S3}{ J. Sally, Stretched Gorenstein rings, J. London Math. Soc. {\bf 20} (1979), 19--26.}

\bibitem{S4}{ J. Sally, Cohen-Macaulay local rings of embedding dimension $e+d-2$,
J. Algebra {\bf 83} (1983), 393--408.}

\bibitem{T}{ Z. Tang, Rees rings and associated graded rings of ideals having higher analytic deviation,
Comm. Algebra {\bf 22} (1994), 4855--4898.}

\bibitem{T2}{ N. V. Trung, Constructive characterization of the reduction numbers,
 Compositio Math.  {\bf 137}  (2003),  99--113.}

\bibitem{U}{ B. Ulrich, Artin-Nagata properties and reductions of ideals,
Contemp. Math. {\bf 159} (1994), 373--400. }


\bibitem{VV}{ P. Valabrega and G. Valla, Form rings and regular sequences,
Nagoya Math. J. {\bf 72} (1978), 91--101.}

\bibitem{W}{ H. J. Wang, On Cohen-Macaulay local rings with embedding dimension
e+d-2, J. Algebra {\bf 190} (1997),  226--240.}

\bibitem{X}{ Y. Xie, Formulas for the multiplicity of graded algebras, Trans. Amer. Math. Soc. {\bf 364}  (2012), 4085--4106.}

\end{thebibliography}
\end{document}